\theoremstyle{plain}
\newtheorem{theorem}{Theorem}[section]
\newtheorem{corollary}[theorem]{Corollary}
\newtheorem{proposition}[theorem]{Proposition}
\newtheorem{lemma}[theorem]{Lemma}
\newtheorem{definition}[theorem]{Definition}
\theoremstyle{remark}
\newtheorem{remark}[theorem]{Remark}
\newtheorem{example}[theorem]{Example}
\newcommand{\cf}{\ensuremath{\mathscr{F}}}
\newcommand{\ind}[1]{\mathbb{1}_{#1}}
\newcommand{\un}{\mathbb{1}}
\newcommand{\zero}{\mathbb{0}}
\date{\today}
\author{Jean-François Delmas}
\address{Jean-François Delmas,
  CERMICS, \'{E}cole des Ponts, France}
\email{jean-francois.delmas@enpc.fr}
\author{Dylan Dronnier}
\address{Dylan Dronnier,
  CERMICS, \'{E}cole des Ponts, France}
\email{dylan.dronnier@enpc.fr}
\author{Pierre-André Zitt}
\address{Pierre-André Zitt, LAMA, Université Gustave Eiffel, France}
\email{pierre-andre.zitt@univ-eiffel.fr}
\newcommand{\abs}[1]{\left\lvert\,#1\,\right\rvert}
\newcommand{\norm}[1]{\left\lVert\,#1\,\right\rVert}
\newcommand{\ceil}[1]{\left\lceil\,#1\,\right\rceil}
\newcommand{\floor}[1]{\left\lfloor\,#1\,\right\rfloor}
\newcommand{\R}{\ensuremath{\mathbb{R}}}
\newcommand{\N}{\ensuremath{\mathbb{N}}}
\newcommand{\Z}{\ensuremath{\mathbb{Z}}}
\newcommand{\E}{\ensuremath{\mathbb{E}}}
\newcommand{\Diag}{\ensuremath{\mathrm{Diag}}}
\newcommand{\cpu}{\mathcal{S}^\mathrm{uni}}
\newcommand{\cpa}{\mathcal{P}^\mathrm{Anti}}
\newcommand{\F}{\ensuremath{\mathcal{F}}}
\newcommand{\AF}{\ensuremath{\mathcal{F}^\mathrm{Anti}}}
\newcommand{\kk}{\ensuremath{\mathrm{k}}}
\newcommand{\rd}{\ensuremath{\mathrm{d}}}
\newcommand{\cp}{\ensuremath{\mathcal{P}}}
\newcommand{\etau}{\ensuremath{\eta^\mathrm{uni}}}
\newcommand{\FF}{\ensuremath{\mathbf{F}}}
\newcommand{\lb}{[\![}
\newcommand{\rb}{]\!]}
\newcommand{\cmir}{\ensuremath{c_\star}}
\newcommand{\cmar}{\ensuremath{c^\star}}
\newcommand{\mir}{\ensuremath{R_{e\star}}}
\newcommand{\mar}{\ensuremath{R_e^\star}}
\newcommand{\regular}{constant degree} 
\newcommand{\Regular}{Constant degree} 
\newcommand{\ho}{\ensuremath{\eta^{\mathrm{h}}}}
\newcommand{\ver}{\ensuremath{\eta^{\mathrm{v}}}}
\newcommand{\hoPath}{\mathscr{P}_{\mathrm{h}}}
\newcommand{\verPath}{\mathscr{P}_{\mathrm{v}}}
\newcommand{\etag}{\ensuremath{\eta_0}}
\newcommand{\etad}{\ensuremath{\eta_{1}}}
\newcommand{\hg}{\ensuremath{h_0}}
\newcommand{\hd}{\ensuremath{h_1}}
\newcommand{\gag}{\ensuremath{\gamma_0}}
\newcommand{\gad}{\ensuremath{\gamma_1}}
\newcommand{\fg}{\ensuremath{\phi_0}}
\newcommand{\fd}{\ensuremath{\phi_1}}
\newcommand{\Vg}{\ensuremath{V_0}}
\newcommand{\Vd}{\ensuremath{V_1}}
\newcommand{\Mg}{\ensuremath{M_0}}
\newcommand{\Md}{\ensuremath{M_1}}
\newcommand{\Sd}{\ensuremath{{\mathbb{S}^{d-1}}}}
\newcommand{\So}{\mathcal{S}^{\bot \mathrm{id}}}
\newcommand{\Sball}{\mathcal{S}^{\mathrm{balls}}}
\newcommand{\leftStrats}{\mathcal{S}_0}
\newcommand{\rightStrats}{\mathcal{S}_1}
\newcommand{\orthStrats}{\mathcal{S}^{\bot\alpha}}
\title{Optimal vaccination: various (counter) intuitive examples}
\begin{document}

\thanks{This work is partially supported by Labex B\'ezout reference ANR-10-LABX-58}

\subjclass[2010]{92D30, 47B34, 47A10, 58E17, 34D20}

\keywords{Kernel operator, vaccination strategy,
  effective reproduction number, multi-objective optimization, Pareto frontier}

\begin{abstract}
  In previous articles, we formalized the problem of optimal allocation strategies for a
  (perfect) vaccine in an infinite-dimensional metapopulation model. The aim of the
  current paper is to illustrate this theoretical framework with multiple examples where
  one can derive the analytic expression of the optimal strategies. We discuss in
  particular the following points: whether or not it is possible to vaccinate optimally
  when the vaccine doses are given one at a time (greedy vaccination strategies); the
  effect of assortativity (that is, the tendency to have more contacts with similar
  individuals) on the shape of optimal vaccination strategies; the particular case where
  everybody has  the same number of neighbors.
\end{abstract}

\maketitle

\section{Introduction}

\subsection{Motivation}

The basic reproduction number, denoted by $R_0$, plays a fundamental role in epidemiology
as it determines the long-term behavior of an epidemic. For a homogeneous model, it is
defined as the number of secondary cases generated by an infected individual in an
otherwise susceptible population. When this number is below $1$, an infected individual
causes less than one infection before its recovery in average; the disease therefore
declines over time until it eventually dies out. On the contrary, when the reproduction
number is greater than $1$, the disease  invades the population.
It follows from this property  that a proportion equal to $1 - 1/R_0$ of the population
should be immunized in order to stop the outbreak. We refer the reader to the monograph of
Keeling and Rohani~\cite{keeling_modeling_2008} for a reminder of these basic properties
on the reproduction number.

In heterogeneous generalizations of classical compartmental models, also called
metapopulation models, see \cite{lajmanovich1976deterministic, beretta_global_1986,
delmas_infinite-dimensional_2020}, the population is stratified into homogeneous groups
sharing the same characteristics (time to recover from the disease, interaction with the
other groups, \ldots). For these models, it is still possible to define a meaningful
reproduction number~$R_0$, as the number of secondary cases generated by a typical
infectious individual when all other individuals are uninfected; see~\cite{Diekmann1990}.
The reproduction number can then be identified as the spectral radius of the so-called
next generation matrix, see~\cite{VanDenDriessche}.  With this definition, it is still
true that the outbreak dies out if~$R_0$ is smaller than~$1$ and invades the population
otherwise; see~\cite{LocalStabilityThieme1985, StabilityOfThHethco1985, VanDenDriessche,
thieme_global_2011, delmas_infinite-dimensional_2020} for instance.

Suppose now  that we have at  our disposal a vaccine  with \emph{perfect efficacy}, that
is, vaccinated individuals are completely immunized to the  disease.  After  a 
vaccination  campaign,  let~$\eta$  denote  the proportion of \textbf{non-vaccinated}
individuals  in the population: in inhomogeneous models,  $\eta$ depends  \emph{a priori} 
on the  group as different groups may  be vaccinated differently.  We will  call $\eta$ a
\emph{vaccination strategy}. For  any strategy $\eta$, let  us denote by $R_e(\eta)$ the
corresponding reproduction  number of the non-vaccinated population, also called the
\emph{effective reproduction number}. In the metapopulation model, it can also be
expressed as the spectral radius of the  effective next  generation matrix,  see
Equation~\eqref{eq:Re-meta} below. The  choice of  $\eta$ naturally  raises a  question
that  may be expressed as the following informal constrained optimization problem:
\begin{equation}\label{eq:informal_optim1}%
  \begin{cases}
    \textbf{Minimize: } & \text{the quantity of vaccine to administrate} \\
    \textbf{subject to: } & \text{herd immunity is reached, that is, $R_e \leq 1$.}
  \end{cases}
\end{equation}
For practical reasons, we will instead look at the problem the other way around. If the
vaccine is only available in limited quantities, the decision makers could try to allocate
the doses so as to maximize efficiency; a natural indicator of this efficiency is the
effective reproduction number. This reasoning leads to the following constrained problem:
\begin{equation}\label{eq:informal_optim2}%
  \begin{cases}
    \textbf{Minimize: } & \text{the effective reproduction number~$R_e$} \\
    \textbf{subject to: } & \text{a given quantity of available vaccine.}
  \end{cases}
\end{equation}
In accordance with~\cite{ddz-theo}, we will denote by $\mir$ the value of this problem: it
is a function of the quantity of available vaccine. The graph of this function is called
the \emph{Pareto frontier}. In order to measure how bad a vaccination strategy can be, we
will also be interested in maximizing the effective reproduction number given a certain
quantity of vaccine:
\begin{equation}\label{eq:informal_max}%
  \begin{cases}
    \textbf{Maximise: } & \text{the effective reproduction number~$R_e$} \\
    \textbf{subject to: } & \text{a given quantity of available vaccine.}
  \end{cases}
\end{equation}
The value function corresponding to this problem is denoted by $\mar$ and its graph is
called the \emph{anti-Pareto frontier}. We will quantify the ``quantity of available
vaccine'' for the vaccination strategy $\eta$ by a cost $C(\eta)$. Roughly speaking the
``best'' (resp. ``worst'') vaccination strategies are solutions to
Problem~\eqref{eq:informal_optim2} (resp. Problem~\eqref{eq:informal_max}). Still
following~\cite{ddz-theo}, they will be called \emph{Pareto optimal} (resp.
\emph{anti-Pareto optimal}) strategies.

The problem of optimal vaccine allocation has been studied mainly in the metapopulation
setting where the population is divided into a finite number of subgroups with the same
characteristics. Longini, Ackerman and Elverback were the first interested in the question
of optimal vaccine distribution given a limited quantity of vaccine supply
\cite{AnOptimizationLongin1978}. Using the concept of next-generation matrix introduced by
Diekmann, Heesterbeek and Metz \cite{Diekmann1990}, Hill and Longini reformulated this
problem thanks to the reproduction number \cite{hill-longini-2003}. Several theoretical
and numerical studies followed focusing on Problem~\eqref{eq:informal_optim1} and/or
Problem~\eqref{eq:informal_optim2} in the metapopulation setting
\cite{DistributionOfGoldst2010, poghotanyan_constrained_2018, DoseOptimalVaDuijze2018,
EvaluatingVaccHaoL2019}. We also refer the reader to the introduction of~\cite{ddz-theo}
for a detailed review of the bibliography.

\medskip
 
In two previous works~\cite{ddz-theo, ddz-Re}, we provided an
infinite-dimensional framework generalizing the metapopulation model where
Problems~\eqref{eq:informal_optim2} and~\eqref{eq:informal_max} are well-posed, justified
that the optimizers are indeed Pareto optimal and studied in detail the Pareto and
anti-Pareto frontiers. Since there is no closed form for the effective reproduction
number, Problems~\eqref{eq:informal_optim2} and~\eqref{eq:informal_max} are hard to solve
in full generality: our goal here is to exhibit examples where one can derive analytic
expressions for the optimal vaccination strategies. The simple models we study give a
gallery of examples and counter-examples to natural questions or conjectures, and may help
understanding common rules of thumb for choosing vaccination policies. We will in
particular be interested in the following three notions.

\begin{enumerate}[(i)]
  \item \textbf{Greedy parametrization of the frontiers}. For the decision maker it is
    important to know if global optimization and sequential optimization are the same as
    one cannot unvaccinated people and redistribute the vaccine once more doses become
    available. More precisely, there is a natural order on the vaccination strategies: let
    us write $\eta'\leq \eta$ if all the people that are vaccinated when following the
    strategy~$\eta$ are also vaccinated when following the strategy~$\eta'$. Let $\eta$ be
    an optimal solution of~\eqref{eq:informal_optim2} for cost $c=C(\eta)$, that is,
    $R_e(\eta) = \mir(c)$. If, for $c'>c$, we can find a strategy $\eta'\leq \eta$ such
    that $R_e(\eta') = \mir(c')$, then the optimization may be, at least in principle,
    found in a greedy way: giving sequentially each new dose of vaccine so as to minimize
    $R_e$ gives, in the end, an optimal strategy for any quantity of vaccine. By analogy
    with the corresponding notion for algorithms we will say in this case that there
    exists a \emph{greedy parametrization} of the Pareto frontier. The existence of such a
    greedy parametrization was already discussed by Cairns in
    \cite{EpidemicsInHeCairns1989} and is examined for each model throughout this paper.
  \item \textbf{Assortative/Disassortative network}. The second notion is a property of
    the network called \emph{assortativity}: a network is called assortative when the
    nodes tend to attach to others that are similar in some way and \emph{disassortative}
    otherwise. The assortativity or disassortativity of a network is an important property
    that helps to understand its topology. It has been oberved that social networks are
    usually assortative while biological and technological networks are disassortative,
    see for example~\cite{AssortativeMixNewman2002}. The optimal vaccination strategies
    can differ dramatically in the case of assortative versus disassortative mixing, see
    Galeotti and Rogers  \cite{StrategicImmunGaleot2013} for a study in a population
    composed of two groups. This question is in particular addressed in
    Section~\ref{sec:ass-disass}  for an elementary  model with an
    arbitrary number of groups.

\item \textbf{How to handle individuals with the same level of
    connection}. Targeting individuals that are the most connected is a common approach used to prevent an
epidemic in a complex network \cite{pastor-satorras_immunization_2002}. In
\cite{ddz-mono}, we show that these strategies are optimal for the so-called monotonic
kernel models, in which the  individuals may be naturally
ordered by a score related to their connectivity.
  When many individuals or groups  are tied for the best score,
  either from the beginning or after some vaccine has been distributed,
the optimal way of vaccinating them
 may be surprisingly varied according to the 
 situation.
This variety of
answers  appears already in the treatment of such individuals in the assortative/disassortative toy model
developed in  Section~\ref{sec:ass-disass}. To go further in this
direction, a large part of the current paper, see
Sections~\ref{sec:constant_degree}-\ref{sec:geometric},  is devoted to
regular or ``\regular'' models 
where  all individuals share the same
degree.
We shall in particular ask whether uniform vaccination
strategies are either the ``best'' or the ``worst'' or even neither the ``best''
nor the ``worst'' possible strategies. 
    
\end{enumerate}

\subsection{Main results}

Section~\ref{sec:discrete}  is dedicated  to  classical finite-dimensional metapopulation
models.  We present two  simple models that,  despite being seemingly  very similar,
display totally different behaviors: the asymmetric and symmetric circle graphs. For  the
first one, where individuals of the group $i$ can only be infected by individuals of the
group $i-1$ and   which corresponds to a next generation matrix given by:
\[
  K_{ij}=\ind{\{i=j+1 \mod{N}\}},
\]
with $N$ the number of groups or nodes in the circle, we derive  a greedy parametrization 
of the
Pareto  frontier. On  the second  one,  where individuals of the group $i$ can only be
infected by individuals of the group $i-1$ or $i+1$ and  which corresponds to a next
generation matrix given by:
\[
  K_{ij}=\ind{\{i=j\pm 1 \mod{N}\}},
\]
we  observe numerically  that the Pareto frontier  is much more  complicated, and in
particular  cannot be parametrized  greedily. Those  two  models are  in  fact \regular{}
models;  the uniform vaccination  strategies are the  ``worst'' for the  first model, and
neither the ``best'' nor the ``worst'' strategies for the second.

After Section~\ref{sec:setting}, where we recall the kernel setting used in  
\cite{ddz-theo}  for   infinite   dimensional   models,  we   focus
in~Section~\ref{sec:ass-disass}  on  the   effect  of  assortativity  on optimal
vaccination strategies. We define a simple kernel model that may be  assortative  or 
disassortative  depending   on  the  sign  of a parameter. In  the discrete 
metapopulation model, the  next generation matrix can be written (up to a multiplicative
constant) as:
\[
  K_{ij}= \left(1 + \varepsilon \ind{\{i\neq j\}}\right)\, \mu_j ,
\]
where $\mu_j\geq 0$ represents the  proportional size of group $j$.  The model  is
assortative  if $\varepsilon<0$  (and $\varepsilon\geq  -1$ so that   the   matrix  $K$  
is   non-negative)   and  disassortative   if $\varepsilon>0$.   We   describe  completely
the   optimal  vaccination strategies, see Theorem~\ref{th:dis-assortative}, and show
that the best strategies for  the assortative case  are the  worst ones if  the mixing
pattern is  disassortative, and vice-versa.  We also prove that  all the Pareto and
anti-Pareto frontiers admit greedy parametrizations, and that Pareto optimal strategies
prioritize individuals that in some sense have the highest degree, that is, are the most
connected.

In  Section~\ref{sec:constant_degree},  we consider  \regular{}  models,
which are  the analogue  of regular  graphs in  the infinite-dimensional
setting.  In the  discrete metapopulation model, the sums  over each row
and  the  sums over  each  column  of  the  next generation  matrix  are
constant.   We prove,  see  Proposition~\ref{prop:regular},  that if  the
effective  reproduction  function  $R_e$  is  convex  then  the  uniform
strategies are the  ``best'' and they give a  greedy parametrization of
the  Pareto  frontier;  and  that  if  $R_e$  is  concave,  the  uniform
strategies are the ``worst''.
Section~\ref{sec:rank-2-reg} is then devoted to a particular model of
rank two, which corresponds in the
discrete metapopulation model to a  next 
generation matrix of the form:
\[
  K_{ij}= (1+ \varepsilon \alpha_i \alpha_j)\, \mu_j
  \quad\text{with} \quad \sum_j \alpha_j \, \mu_j=0,
\]
where $\varepsilon$ may be $+1 $ or $-1$,  and $\sup_i \alpha_i^2\leq
1$, so that the matrix $K$ is non-negative.  The condition
$\sum_j \alpha_j\mu_j=0$ ensures that the model has a constant
degree. In those cases, we give a complete description of the ``best''
and the ``worst'' vaccination strategies, the uniform one being
``best'' for $\varepsilon=+1$ and ``worst'' otherwise, see
Proposition~\ref{prop:rank2}.  In Section~\ref{ssec:infinite}, we also
provide an example of kernel (in infinite dimension) for which the set
of optimal strategies has an infinite number of connected
components. In this particular case, there is no greedy
parametrization of the Pareto frontier.

As another application of the results of Section~\ref{sec:constant_degree}, we investigate
in Section~\ref{sec:geometric}  geometric \regular{} kernels
defined on the unit sphere $\Sd\subset \R^d$. Intuitively an individual at point $x$ on the
sphere is infected by an individual at point $y$ with an intensity
$\kk(x,y)$ depending on the distance between $x$ and $y$. Those 
kernels appear in the graphon theory as limit of large dense random
geometric graphs.
We give 
 a particular attention  to the affine model in
 Section~\ref{sec:sphere-aff}, where:
 \[
   \kk(x,y)=1+ \varepsilon \langle
   x,y \rangle, \qquad \varepsilon \geq -1,
 \]
 where $\langle x,y \rangle$ is the usual scalar product in
 the ambient space~$\R^d$. 
Intuitively, for 
 $\varepsilon>0$,  the infection propagates through the
 nearest neighbors: this may be seen as a  kind of spatial assortativity. 
 By contrast, for  $\varepsilon<0$   the infection propagates through the
 furthest individuals neighbors, in a spatially disassortative way. 
 For this affine model, we  completely describe the   ``best''   and  the  ``worst''
vaccination   strategies, see Proposition~\ref{prop:rank2-sphere}.  

\section{First examples in the discrete setting}\label{sec:discrete}

In this section, we use the framework developped by Hill and Longini in
\cite{hill-longini-2003} for metapopulation models and provide optimal vaccination
strategies for two very simple examples. Despite their simplicity, these examples showcase
a number of interesting behaviors, that will occur a in much more general setting, as we
will see in the rest of the paper.

\subsection{The reproduction number in metapopulation models}\label{sec:discrete-reprod}

In metapopulation models, the population is divided into $N \geq 2$ different
subpopulations and we suppose that individuals within a same subpopulation share the same
characteristics. The different groups are labeled $1$, $2$, \ldots, $N$. We denote by
$\mu_1$, $\mu_2$, \ldots, $\mu_{N}$ their respective size (in proportion with respect to
the total size) and we suppose that those do not change over time. By the linearization of
the dynamic of the epidemic at the disease-free equilibrium, we obtain the so-called
\emph{next-generation matrix} $K$, see~\cite{VanDenDriessche}, which is a $N \times N$
matrix with non-negative coefficients. For a detailed discussion on the biological
interpretation of the coefficients of the next-generation matrix, we refer the reader to
\cite[Section~2]{ddz-Re}. We also refer to \cite{ddz-2pop} for an extensive treatment of
the two-dimensional case. 

The basic reproduction number is equal to the spectral radius of the next-generation
matrix:
\begin{equation}
  R_0 = \rho(K),
\end{equation}
where $\rho$ denotes the spectral radius. Since the matrix $K$ has non-negatives entries,
the Perron-Frobenius theory implies that $R_0$ is also an eigenvalue of $K$. If $R_0 > 1$,
the epidemic process grows away from the disease-free equilibrium while if $R_0 < 1$, the disease
cannot invade the population; see \cite[Theorem~2]{VanDenDriessche}.

We now introduce the effect of vaccination. Suppose that we have at our disposal a
vaccine with perfect efficacy, \textit{i.e.}, vaccinated individuals are completely
immunized to the infection. We denote by $\eta=(\eta_1, \ldots, \eta_{N})$ the vector of
the proportions of \textbf{non-vaccinated} individuals in the different groups. We shall
call $\eta$ a vaccination strategy and denote by $\Delta=[0, 1]^N$ the set of all possible
vaccination strategies. According to \cite{ddz-Re,ddz-theo}, the next-generation matrix
corresponding to the dynamic with vaccination is equal to the matrix $K$ multiplied by the
matrix $\Diag(\eta)$ on the right, where $\Diag(\eta)$ is the $N \times N$ diagonal matrix
with coefficients $\eta \in \Delta$. We call the spectral radius of this matrix the
\textit{effective reproduction number}:
\begin{equation}
  \label{eq:Re-meta}
  R_e(\eta) = \rho\left( K \cdot \Diag(\eta) \right).
\end{equation}
The effective reproduction number accounts for the vaccinated (and immunized) people in
the population, as opposed to the basic reproduction number, which corresponds to a fully
susceptible population. When nobody is vaccinated, that is $\eta = \un = (1,\ldots,1)$, 
$\Diag(\eta)$ is equal to the identity matrix, the next-generation matrix is unchanged and
$R_e(\eta)=R_e(\un)=R_0$.

We suppose that the \emph{cost} of a vaccination strategy is, up to an irrelevant
multiplicative constant, equal to the total proportion of vaccinated people and is
therefore given by:
\begin{equation}
  C(\eta) = \sum_{i=1}^{N} (1 - \eta_i) \mu_i = 1 - \sum_{i=1}^{N} \eta_i \mu_i,
\end{equation}
where $\eta=(\eta_1, \ldots, \eta_{N})\in \Delta$. We refer to
\cite[Section~5.1, Remark 5.2]{ddz-theo}
for considerations on more general cost functions.

\begin{example}[Uniform vaccination]\label{ex:uniform}
  The uniform strategy of cost $c$ consists in vaccinating the same proportion of people
  in each group:~$\eta = (1-c)\un$. By homogeneity of the spectral radius, the reproduction number~$R_e(\eta)$ is
  then equal to~$(1 - c) R_0$.
\end{example}

\subsection{Optimal allocation of vaccine doses}\label{sec:discrete-prob}

As mentioned in the introduction and recalled in Section~\ref{sec:discrete-reprod},
reducing the reproduction number is fundamental in order to control and possibly eradicate
the epidemic. However, the vaccine may only be available in a limited quantity, and/or the
decision maker may wish to limit the cost of the vaccination policy. This motivates our
interest in the following related problem:
\begin{equation}\label{eq:prob-min-Re}
  \left\{
    \begin{array}{cc}
      \min \, & R_e(\eta), \\
      \text{such that} & C(\eta) = c.
    \end{array}
  \right.
\end{equation}
According to \cite{ddz-theo}, one can replace the constraint~$\{C(\eta)=c\}$
by~$\{C(\eta)\leq c\}$ without modifying the solutions.  The opposite problem consists in
finding out the \emph{worst possible way} of allocating vaccine. While this does not seem
at first sight to be as important, a good understanding of bad vaccination strategies may
also provide rules of thumb in terms of anti-patterns.  In order to estimate how bad a
vaccination strategy can be, we therefore also consider the following problem:
\begin{equation}\label{eq:prob-max-Re}
  \left\{
    \begin{array}{cc}
      \max \, & R_e(\eta), \\
      \text{such that} & C(\eta) = c.
    \end{array}
  \right.
\end{equation}
According to \cite{ddz-theo}, one can replace the constraint ~$\{C(\eta)=c\}$
by~$\{C(\eta)\geq c\}$ without modifying the solutions. \medskip
 
Since the coefficients of the matrix~$K\cdot \Diag(\eta)$ depend continuously on~$\eta$, it is
classical that its eigenvalues also depend continuously on~$\eta$ (see for example
\cite[Appendix~D]{horn2012matrix}) and in particular the function~$R_e$ is continuous
on~$\Delta=[0,1]^N$. Since the function~$C$ is also continuous on~$\Delta$, the
compactness of~$\Delta$ ensures the existence of solutions for
Problems~\eqref{eq:prob-min-Re} and~\eqref{eq:prob-max-Re}. For ~$c \in [0,1]$,~$\mir(c)$
(resp.~$\mar(c)$) stands for the minimal (resp. maximal) value taken by~$R_e$ on the set
of all vaccination strategies~$\eta$ such that~$C(\eta) = c$:
\begin{align}
  \label{eq:mir}
  \mir(c) &= \min \{ R_e(\eta)\,\colon\, \eta\in \Delta \text{ and } C(\eta) = c \},\\
  \label{eq:mar}
  \mar(c) &= \max \{ R_e(\eta)\,\colon\, \eta\in \Delta \text{ and } C(\eta) = c \}.
\end{align}
It is easy to check that the functions~$\mir$ and~$\mar$ are non increasing. Indeed,
if~$\eta^1$ and~$\eta^2$ are two vaccination strategies such that~$\eta^1 \leq \eta^2$
(where~$\leq$ stands for the pointwise order), then~$R_e(\eta^1) \leq R_e(\eta^2)$
according to the Perron-Frobenius theory. This easily implies that~$\mir$ and~$\mar$ are
non-increasing. We refer to \cite{ddz-theo,ddz-Re} for more properties on those functions;
in particular they are also continuous. For the vaccination strategy~$\eta = \zero =
(0,...,0)$ (everybody is vaccinated) with cost~$C(\mathbb{0}) = 1$, the transmission of
the disease in the population is completely stopped, \textit{i.e.}, the reproduction
number is equal to~$0$. In the examples below, we will see that for some next-generation
matrices~$K$, this may be achieved with a strategy~$\eta$ with cost~$C(\eta) < 1$. Hence,
let us denote by~$\cmir$ the minimal cost required to completely stop the transmission of
the disease:
\begin{equation}
  \label{eq:def-cmir}
  \cmir = \inf \{ c \in [0,1] \, \colon \, \mir(c) = 0 \}.
\end{equation}
In a similar fashion, we define by symmetry the maximal cost of totally inefficient
vaccination strategies:
\begin{equation}
  \label{eq:def-cmar}
  \cmar = \sup \{ c \in [0,1] \, \colon \, \mar(c) = R_0 \}.
\end{equation}
According to \cite[Lemma~5.13]{ddz-theo}, we have~$\cmar = 0$ if the matrix~$K$ is
irreducible, \textit{i.e.}, not similar via a permutation to a block upper triangular
matrix. The two matrices considered below in this section are irreducible. \medskip

Following \cite{ddz-theo}, the \emph{Pareto frontier} associated to the ``best''
vaccination strategies, solution to Problem~\eqref{eq:prob-min-Re}, is defined by:
\begin{equation}
  \label{eq:def-P-front}
  \F = \{(c, \mir(c)) \, \colon \, c \in [0,\cmir]\}. 
\end{equation}
The set of ``best'' vaccination strategies, called \emph{Pareto optimal} strategies, is
defined by:
\begin{equation}\label{eq:def-P-set}
  \cp=\{\eta\in \Delta\, \colon\, (C(\eta), R_e(\eta))\in \F\}.
\end{equation}
When~$\cmar=0$ (which will be the case for all the examples considered in this paper), the
\emph{anti-Pareto frontier} associated to the  ``worst'' vaccination strategies, solution
to Problem~\eqref{eq:prob-max-Re}, is defined by: 
\begin{equation}\label{eq:def-AP-front}
  \AF = \{(c, \mar(c)) \, \colon \, c \in [0, 1]\}. 
\end{equation}
The set of ``worst'' vaccination strategies, called \emph{anti-Pareto optimal} strategies,
is defined by:
\begin{equation}\label{eq:def-AP-set}
  \cpa=\{\eta\in \Delta\, \colon\, (C(\eta), R_e(\eta))\in \AF\}.
\end{equation}
The set of uniform strategies will play a role in the sequel:
\begin{equation}\label{eq:def-cpu}
  \cpu=\{ t\un\, \colon\, t \in [0,1]\}.
\end{equation}
We denote by~$\FF=\{(C(\eta), R_e(\eta))\, \colon\, \eta\in \Delta\} $ the set of all
possible outcomes. According to \cite[Section~6.1]{ddz-theo}, the set~$\FF$ is a subset of
$[0,1] \times [0,R_0]$ delimited below by the graph of~$\mir$ and above by the graph
of~$\mar$; it is compact, path connected and its complement is connected in~$\R^2$.

\medskip

A \emph{path} of vaccination strategies is a measurable function~$\gamma \, \colon \,
[a,b] \to \Delta$ where~$a<b$. It is \emph{monotone} if for all~$a \leq s \leq t \leq b$
we have~$\gamma(s) \geq \gamma(t)$, where~$\leq$ denotes the pointwise order. A
\emph{greedy parametrization} of the Pareto (resp.\ anti-Pareto) frontier is a monotone
continuous path~$\gamma$ such that the image of $(C \circ \gamma, R_e \circ \gamma)$ is
equal to~$\F$ (resp.~$\AF$). If such a path exists, then its image can be browsed by a
greedy algorithm which performs infinitesimal locally optimal steps.

\begin{remark}\label{rem:R0-unknown}
  Let~$K$ be the next-generation matrix and let~$\lambda \in \R_+ \backslash \{0\}$. By
  homogeneity of the
  spectral radius, we have~%
  \(
  \rho(\lambda K\cdot \Diag(\eta))
  = \lambda \rho(K\cdot \Diag(\eta))
  \).
  Thus, the solutions of
  Problems~\eqref{eq:prob-min-Re} and~\eqref{eq:prob-max-Re} and the
  value of~$\cmir$ are invariant by scaling of the matrix~$K$. As for
  the functions~$\mir$ and~$\mar$, they are scaled by the same
  quantity. Hence, in our study, the value of~$R_0$ will not
  matter. Our main concern will be to find the best and the worst
  vaccination strategies for a given cost and compare them to the
  uniform strategy.
\end{remark}

\subsection{The fully asymmetric circle model}\label{sec:asym-circle}

We consider a model of~$N\geq 2$ equal subpopulations (\textit{i.e.}
$\mu_1 = \cdots =\mu_{N} = 1/N$) where each subpopulation only
contaminates the next one. The next-generation matrix, which is equal to
the cyclic permutation matrix, and the effective next generation matrix
are given by:
\begin{equation}
  K =
  \begin{pmatrix}
    0 & 1 & & & \\
      & 0 & 1 & & \\
      & & \ddots & \ddots & \\
    0 & & & 0 & 1\\
    1 & 0 & & & 0
  \end{pmatrix}
  \quad\text{and}\quad
  K \cdot \Diag(\eta) =
  \begin{pmatrix}
    0 & \eta_2 & & & \\
      & 0 & \eta_3 & & \\
      & & \ddots & \ddots & \\
    0 & & & 0 & \eta_{N} \\
    \eta_{1} & 0 & & & 0
  \end{pmatrix},
\end{equation}
where~$\eta=(\eta_1,  \ldots,   \eta_{N})\in  \Delta=[0,   1]^N$.  The
next-generation matrix can be interpreted as the adjacency matrix of the
fully asymmetric cyclic graph; see Figure~\ref{fig:graph-asy}.

\begin{figure}
  \begin{subfigure}[T]{.5\textwidth}
    \centering
    \includegraphics[page=1]{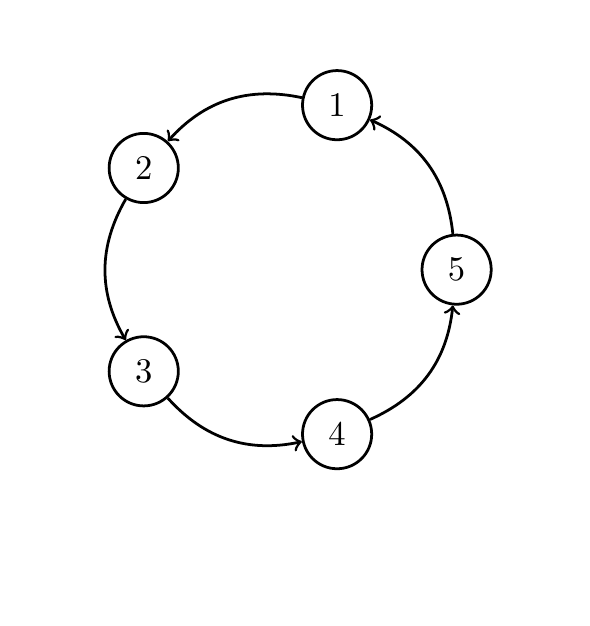}
    \caption{Graphical representation of the transmission
    of the disease.}\label{fig:graph-asy} 
  \end{subfigure}%
  \begin{subfigure}[T]{.5\textwidth}
    \centering
    \includegraphics[page=2]{asym-circle}
    \caption{Solid line: the Pareto frontier~$\F$; dashed line: the
      anti-Pareto frontier~$\AF$ (which corresponds to the uniform strategies); blue
    region: all possible outcomes~$\FF$.}\label{fig:pareto-asy}
  \end{subfigure}%
  \caption{Example of optimization for the fully asymmetric circle model with~$N=5$
  subpopulations.}\label{fig:optim-asy}
\end{figure} 

By an elementary computation, the characteristic polynomial of the matrix~$K
\cdot \Diag(\eta)$ is equal to~$X^N - \prod_{1 \leq i \leq N} \eta_i$. Hence, the
effective reproduction number can be computed via an explicit formula; it corresponds to
the geometric mean:
\begin{equation}\label{eq:geom-mean}
  R_e(\eta)= \left(\prod_{i=1}^{N} \eta_i\right)^{1/N}.
\end{equation}

The Pareto and anti-Pareto frontier are totally explicit for this elementary example, and
given by the following proposition. For additional comments on this example see also
Example~\ref{ex:asym-cyclic} below.

\begin{proposition}[Asymmetric circle]\label{prop:asym-circle}
  For the fully asymmetric circle model, we have:
  \begin{enumerate}[(i)]
  \item\label{pt:asym-circle-best} The least quantity of vaccine necessary to completely
    stop the propagation of the disease is~$\cmir=1/N$. Pareto optimal strategies have a
    cost smaller than~$\cmir$, and correspond to giving all the available vaccine to one
    subpopulation: \[ \cp=\left\{\eta=(\eta_1, \ldots, \eta_{N})\in [0, 1]^N\, \colon
    \eta_i=1 \text{ for all~$i$ but at most one}\right\}. \] The Pareto frontier is given
    by the graph of the function~$\mir$ on~$[0, \cmir]$, where~$\mir$ is given by:
    \[ 
      \mir(c) = (1-Nc )_+^{1/N} \quad\text{for}\quad c\in [0,1].
    \]
  \item\label{pt:asym-circle-worst} The maximal cost of totally inefficient vaccination
    strategies is~$\cmar=0$. The anti-Pareto optimal strategies consist in vaccinating
    uniformly the population, \emph{i.e.}:
    \[
      \cpa=\cpu.
    \]
    The anti-Pareto frontier is given by the graph of the
    function~$\mar: c \mapsto 1-c$ on $[0, 1]$. 
  \end{enumerate}
\end{proposition}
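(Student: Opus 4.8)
The plan is to exploit the explicit formula~\eqref{eq:geom-mean}, $R_e(\eta)=\left(\prod_{i=1}^N\eta_i\right)^{1/N}$, together with the observation that fixing the cost amounts to fixing the sum of the coordinates: since $\mu_i=1/N$, the constraint $C(\eta)=c$ reads $\sum_{i=1}^N\eta_i=N(1-c)$. Both Problems~\eqref{eq:prob-min-Re} and~\eqref{eq:prob-max-Re} thus reduce to optimizing the product $\prod_i\eta_i$ over the polytope $P_c=\{\eta\in[0,1]^N\colon\sum_i\eta_i=N(1-c)\}$, and it is convenient to pass to $\sum_i\log\eta_i$, a strictly concave function on the positive orthant.

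For the anti-Pareto frontier (part~(ii)), I would simply invoke the arithmetic--geometric mean inequality: on $P_c$ one has $\prod_i\eta_i\leq\left(\tfrac1N\sum_i\eta_i\right)^N=(1-c)^N$, with equality if and only if all the $\eta_i$ are equal, i.e.\ $\eta=(1-c)\un$. This gives $\mar(c)=1-c$, identifies the unique maximizer at each cost as the uniform strategy so that $\cpa=\cpu$, and yields $\cmar=0$ since $\mar(c)=R_0=1$ holds only at $c=0$.

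For the Pareto frontier (part~(i)), the task is to minimize the product, which is the more delicate direction. First I would separate two regimes according to whether $P_c$ meets a coordinate hyperplane: the sum $N(1-c)$ can be realized with one coordinate equal to $0$ (the remaining $N-1$ coordinates, each at most $1$, absorbing the rest) if and only if $N(1-c)\leq N-1$, that is $c\geq 1/N$. In that regime the minimal product is $0$, so $\mir(c)=0$ and $\cmir=1/N$. For $c<1/N$ every $\eta\in P_c$ has all coordinates strictly positive, so $\sum_i\log\eta_i$ is well defined and strictly concave on the compact convex set $P_c$, and its minimum is attained exactly at the extreme points of $P_c$. The key computation is to check that, when $N(1-c)\in(N-1,N)$, these extreme points are precisely the permutations of $(1-Nc,1,\ldots,1)$: any extreme point has $N-1$ coordinates in $\{0,1\}$, none of which can be $0$, hence $N-1$ of them equal $1$ and the last equals $N(1-c)-(N-1)=1-Nc$. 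Each such vertex has product $1-Nc$, giving $\mir(c)=(1-Nc)^{1/N}$, and strict concavity rules out non-vertex minimizers, so these permutations are exactly the Pareto optimal strategies of cost $c$.

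Assembling the two regimes yields $\mir(c)=(1-Nc)_+^{1/N}$ on $[0,1]$ and $\cmir=1/N$; letting $c$ range over $[0,1/N]$, the minimizers describe exactly the strategies with $\eta_i=1$ for all but at most one index (the free coordinate equal to $1-Nc\in[0,1]$), which is the announced description of $\cp$, and one checks that each such strategy allocates all its vaccine to a single subpopulation and has cost at most $\cmir$. The main obstacle is this minimization of the product: unlike the maximization it is not a one-line mean inequality, and the argument must both pin down the extreme points of $P_c$ and use strict concavity to exclude interior minimizers in order to obtain the exact set $\cp$ rather than merely the value $\mir$. An elementary substitute for the concavity argument is a two-coordinate exchange step, which strictly decreases the product by pushing a pair of unequal interior coordinates apart until one of them reaches $0$ or $1$, iterated until at most one coordinate remains free.
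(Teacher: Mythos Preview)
Your proof is correct and follows essentially the same route as the paper: the AM--GM inequality for~(ii), and concavity together with the extreme-point structure of the simplex slice $P_c$ for~(i), with the two-coordinate exchange mentioned as an elementary alternative in both. Your passage to $\sum_i\log\eta_i$ gives strict concavity and hence a slightly cleaner justification that the minimizers are \emph{exactly} the permutations of $(1-Nc,1,\dots,1)$; the paper instead invokes Bauer's maximum principle for the concave function $R_e$ and then uses symmetry.
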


In Figure~\ref{fig:pareto-asy}, we have plotted the Pareto and the anti-Pareto frontiers
corresponding to asymmetric circle model with~$N=5$ subpopulations.

\begin{remark}[Greedy parametrization]
  From Proposition~\ref{prop:asym-circle}, we see that there exists a greedy
  parametrization of the Pareto frontier, which consists in giving all the available
  vaccine to one subpopulation until its complete immunization. Similarly, the anti-Pareto
  frontier is greedily parametrized by the uniform strategies.
\end{remark}

\begin{proof}
  We first prove \ref{pt:asym-circle-best}. Suppose that~$c\geq 1/N$. There is enough
  vaccine to protect entirely one of the groups and obtain~$R_e(\eta) = 0$ thanks to
  Equation~\eqref{eq:geom-mean}. This gives~$\cmir\leq 1/N$ and~$\mir(c)=0$ for~$c\geq
  1/N$.

  Let~$0 \leq c < 1/N$. According to \cite[Section~3.1.5]{bv2009}, the map~$\eta\mapsto
  R_e(\eta)$ is concave. According to Bauer's maximum principle
  \cite[Corollary~A.3.3]{ConvexFunctionNicule2006},~$R_e$ attains its minimum on~$\{ \eta
  \in [0, 1]^N \, \colon \, \, C(\eta) = c\}$ at some extreme point of this set. These
  extreme points are strategies~$\eta \in [0, 1]^N$ such that~$\eta_i = 1 - Nc$ for some
  $i$ and~$\eta_j = 1$ for all~$j \neq i$. Since~$R_e$ is a symmetric function of its~$N$
  variables, it takes the same value~$(1-Nc)^{1/N}$ on all these strategies, so they are
  all minimizing, which proves Point~\ref{pt:asym-circle-best}.

  \medskip

  We give another elementary proof of \ref{pt:asym-circle-best} when~$c < 1/N$. Let~$\eta$
  be a solution of Problem~\eqref{eq:prob-min-Re}. Assume without loss of generality
  that~$\eta_1 \leq \cdots\leq \eta_{N}$. Suppose for a moment that~$\eta_2 < 1$, and
  let~$\varepsilon > 0$ be small enough to ensure~$\eta_1>\varepsilon$ and~$\eta_2 < 1 -
  \varepsilon$. Then the vaccination strategy~$\tilde{\eta} = (\eta_1 - \varepsilon,
  \eta_2 + \varepsilon, \eta_3, \ldots,\eta_{N})$ is admissible, and:
  \[
    R_e(\tilde{\eta})^N =
    R_e(\eta)^N - (\varepsilon(\eta_2 - \eta_1) + \varepsilon^2)
    \prod_{i = 3}^{N} \eta_i< R_e(\eta)^N, 
  \]
  contradicting the optimality of~$\eta$. Therefore the Pareto-optimal strategies have
  only one term different from~$1$, and must be equal to \( ((1-Nc),1,\ldots, 1) \), up to
  a permutation of the indices. 

  \medskip

  Now, let us prove \ref{pt:asym-circle-worst}. Let~$\eta$ such that~$C(\eta) = c$.
  According to the inequality of arithmetic and geometric means:
  \begin{equation*}
    R_e(\eta) \leq \frac{\eta_1 + \cdots + \eta_{N}}{N} = 1-c. 
  \end{equation*}
  By Example~\ref{ex:uniform}, the right hand side is equal to the effective reproduction
  number of the uniform vaccination at cost~$c$.  This ends the proof of the proposition.
\end{proof}

\subsection{Fully symmetric circle model}\label{sec:fully-sym-circle}

We now consider the case where each of the~$N$ subpopulation may infect both of their
neighbours. The next-generation matrix and the effective next-generation matrix are given
by:
\begin{equation}
  \renewcommand{\arraystretch}{1.2}
  K =
  \begin{pmatrix}
    0 & 1 & & 0 & 1 \\
    1 & 0 & 1 & & 0 \\
      & 1 & \ddots & \ddots & \\
    0 & & \ddots & 0 & 1\\
    1 & 0 & & 1 & 0
  \end{pmatrix} \quad \text{and} \quad
  K\cdot \Diag(\eta) =
  \begin{pmatrix}
    0 & \eta_2 & & 0 & \eta_{N} \\
    \eta_1 & 0 & \eta_3 & & 0 \\
	   & \eta_2 & \ddots & \ddots & \\
    0 & & \ddots & 0 & \eta_{N} \\
    \eta_{1} & 0 & & \eta_{N-1} & 0
  \end{pmatrix} .
\end{equation}
Again, we can represent this model as a graph; see Figure~\ref{fig:graph-sy}.

\begin{figure}
  \begin{subfigure}[T]{.5\textwidth}
    \centering
    \includegraphics[page=1]{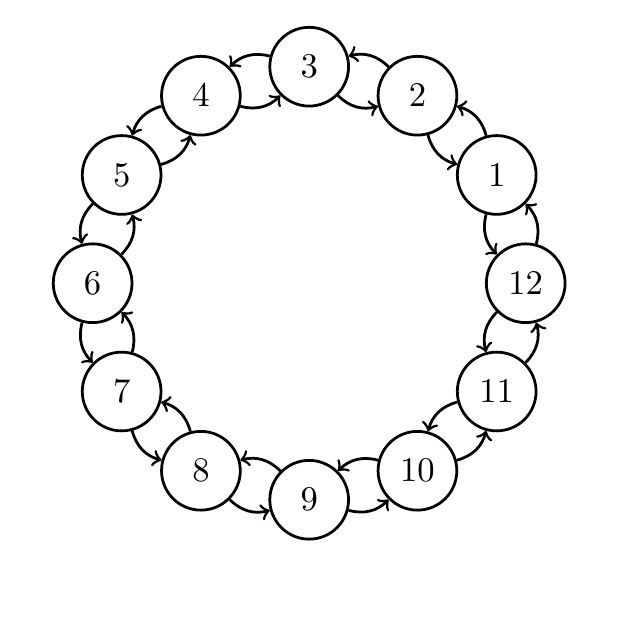} \caption{Graphical representation
    of the transmission of the disease.}
    \label{fig:graph-sy} 
  \end{subfigure}%
  \begin{subfigure}[T]{.5\textwidth} \centering
    \includegraphics[page=2]{sym-circle}
    \caption{Solid line: the Pareto frontier~$\F$; dashed line: the
      anti-Pareto frontier~$\AF$; dotted line: outcomes of the
    uniform strategies; blue region: all possible outcomes~$\FF$.}
    \label{fig:pareto-sy}
  \end{subfigure}%
  \caption{Example of optimization for the fully symmetric circle model with~$N=12$
  subpopulations.}
  \label{fig:optim-sy}
\end{figure} 

There is no closed-form formula to express~$R_e$ for~$N \geq 5$ and the optimization is
way harder than the asymmetric case. Since~$K$ is irreducible, we have~$\cmar=0$. Our only
analytical result for this model is the computation of~$\cmir$.

\begin{proposition}[Optimal strategy for stopping the
  transmission]\label{prop:cmir-full-circle}
  For the fully symmetric circle model, the strategy~$\eta' = \ind{i \,
  \text{even}}$ is Pareto optimal for the fully symmetric circle and~$R_e(\eta')=0$. In
  particular,~$\cmir$ is equal to~$C(\eta')=\ceil{N/2}/N$.
\end{proposition}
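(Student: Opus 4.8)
The plan is to translate the analytic condition $R_e(\eta)=0$ into a purely combinatorial statement about the support of $\eta$, namely that the non-vaccinated groups must form an independent set in the cycle graph underlying $K$, and then to recognise $\cmir$ as $1/N$ times the size of a minimum vertex cover of that cycle. Since the maximum independent set of the $N$-cycle has $\floor{N/2}$ vertices, the minimum vertex cover has $\ceil{N/2}$ vertices, which will produce exactly the announced value.

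First I would verify directly that $R_e(\eta')=0$. Writing $M=K\cdot\Diag(\eta')$, a nonzero entry $M_{ij}=K_{ij}\eta'_j$ forces $\eta'_j=1$ (so $j$ even) and $i=j\pm 1 \bmod N$ (so $i$ odd); hence every nonzero entry of $M$ sits in an odd row and an even column. The same bookkeeping gives $M^2=0$, since in $(M^2)_{ik}=\sum_j M_{ij}M_{jk}$ the first factor requires $j$ even and the second requires $j$ odd. A matrix with $M^2=0$ is nilpotent, so $R_e(\eta')=\rho(M)=0$. The cost is immediate: the vaccinated groups are exactly the odd indices in $\{1,\dots,N\}$, of which there are $\ceil{N/2}$, whence $C(\eta')=\ceil{N/2}/N$. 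By the definition of $\cmir$ this already yields $\cmir\le \ceil{N/2}/N$.

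The substance of the argument is the matching lower bound $\cmir\ge \ceil{N/2}/N$, for which I would argue by contraposition. Let $\eta$ satisfy $R_e(\eta)=0$ and set $S=\{j:\eta_j>0\}$. If two neighbouring groups $i\sim j$ both lie in $S$, then for $M=K\cdot\Diag(\eta)$ one has $(M^2)_{ii}\ge M_{ij}M_{ji}=\eta_j\eta_i>0$. But $R_e(\eta)=\rho(M)=0$ forces $M$ nilpotent, hence $\mathrm{tr}(M^2)=0$; as $M^2$ is non-negative, all of its diagonal entries vanish, a contradiction. Thus $S$ is an independent set of the $N$-cycle. A short counting argument (the vertices of $S$ cut the cycle into $|S|$ arcs, each containing at least one excluded vertex, so $N\ge 2|S|$) gives $|S|\le \floor{N/2}$, and therefore $\sum_i \eta_i=\sum_{i\in S}\eta_i\le |S|\le \floor{N/2}$. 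Consequently $C(\eta)=1-\tfrac1N\sum_i\eta_i\ge 1-\floor{N/2}/N=\ceil{N/2}/N$, so $\cmir\ge \ceil{N/2}/N$, and combining with the previous paragraph gives equality.

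Finally I would deduce the Pareto optimality of $\eta'$. Since $\mir$ is continuous and non-increasing with $\mir(c)=0$ precisely on $[\cmir,1]$, we have $\mir(\cmir)=0$; because $C(\eta')=\cmir$ and $R_e(\eta')=0$, the point $(C(\eta'),R_e(\eta'))=(\cmir,\mir(\cmir))$ lies on the Pareto frontier $\F$, so $\eta'\in\cp$. The one delicate point is the equivalence between $R_e(\eta)=0$ and independence of the support; the direction needed for the lower bound relies on the elementary $(M^2)_{ii}$ estimate together with the fact that a non-negative nilpotent matrix has vanishing diagonal in every power. I expect getting this nilpotency/trace step clean, and correctly pinning down the independence number $\floor{N/2}$ of the cycle, to be the main thing to control.
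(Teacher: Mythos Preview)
Your proof is correct and follows essentially the same strategy as the paper's: both the upper bound (via $M^2=0$ for $M=K\cdot\Diag(\eta')$) and the lower bound (via the observation that $R_e(\eta)=0$ forces $\sum_i \eta_i\eta_{i+1}=0$, hence the support of $\eta$ is independent in the $N$-cycle) coincide with the paper's argument. The only cosmetic difference is that the paper extracts the quantity $\sum_i \eta_i\eta_{i+1}$ as (minus) the coefficient of $X^{N-2}$ in the characteristic polynomial of $M$, whereas you obtain it as $\tfrac12\operatorname{tr}(M^2)$; since $\operatorname{tr}M=0$ these are the same number, and both routes conclude identically via the independence number $\lfloor N/2\rfloor$ of the cycle. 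The paper also mentions, as an alternative, quoting a general result from \cite{ddz-Re} that $\cmir$ equals the cost of the complement of a maximum independent set, which is exactly the combinatorial translation you carry out by hand.
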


\begin{proof}
  The term~$X^{N-2}$ of the characteristic polynomial of~$K\cdot \Diag(\eta)$ has a coefficient
  equal to the sum of all principal minors of size~$2$:
  \begin{equation}
    - (\eta_1 \eta_2 + \eta_2 \eta_3 + \ldots + \eta_{N-1} \eta_{N} + \eta_{N} \eta_1). 
  \end{equation}
  If~$\eta$ is such that~$N C(\eta) < \ceil{N/2}$, then at least one of the term above is
  not equal to~$0$, proving that the sum is negative. Hence, there is at least one
  eigenvalue of~$K \cdot \Diag(\eta)$ different from~$0$, and~$R_e(\eta) > 0$. We deduce
  that~$\cmir \geq \ceil{N/2}/N$.

  Now, let~$\eta'$ such that~$\eta_i' = 0$ for all odd~$i$ and
  $\eta_i' = 1$ for all even~$i$, so that~$C(\eta') = \ceil{N/2}/N$. The matrix~$K\cdot
  \Diag(\eta')$ is nilpotent as its square is 0. Since the spectral radius of a nilpotent
  matrix is equal~$0$, we get~$R_e(\eta') = 0$. This ends the proof of the proposition.

  \medskip

  We can give another proof of the proposition: it is enough to notice that the nodes
  labelled with an odd number form a maximal independent set of the cyclic graph.
  Taking~$\eta'$ equal to the indicator function of this set, we deduce from
  \cite[Section~6.4]{ddz-Re} that~$\eta'$ is Pareto optimal,~$R_e(\eta')=0$
  and~$\cmir=C(\eta')$.
\end{proof}

We pursue the analysis of this model with numerical computations. We choose~$N=12$
subpopulations, and compute an approximate Pareto frontier, using
the Borg multiobjective evolutionary algorithm%
\footnote{%
  The algorithm is described in \cite{HR13}; we use the version coded in the
\texttt{BlackBoxOptim} package for the Julia programming language.}.
The results are plotted in Figure~\ref{fig:symmetric_circle}. We represent additionally
the curves~$(c, R(\eta(c)))$ where the vaccination strategy~$\eta(c)$ for a given cost~$c$
are given by deterministic path of ``meta-strategies'':
\begin{itemize}
  \item \textbf{Uniform strategy:} distribute the vaccine uniformly to all~$N$
    subpopulations; 
  \item \textbf{``One in~$j$'' strategy:} vaccinate one in~$j$ subpopulation,
    for~$j=2,3,4$.
\end{itemize} 

\begin{figure}
  \begin{subfigure}[T]{.5\textwidth}
    \centering
    \includegraphics[page=1]{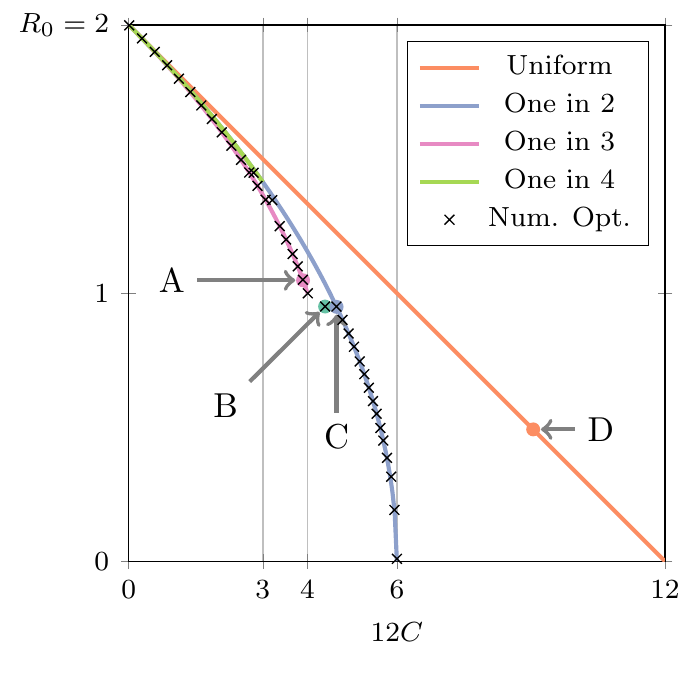}
    \caption{Effective reproduction number~$R_e$ against
    vaccination cost~$c$ for various meta-strategies. }
    \label{fig:symmetric-circle1}
\end{subfigure}%
\begin{subfigure}[T]{.5\textwidth}
  \centering
  \includegraphics[page=2]{cycle}
   \caption{Vaccination strategies corresponding to
   the four labelled points.}
 \label{fig:symmetric-circle2}
\end{subfigure}%
  \caption{Pareto frontier and computation of the outcomes for the paths of the four
    meta-strategies. Some meta-strategies~$\{ \eta_A, \eta_B, \eta_C, \eta_D \}$ are
    represented on the right with their corresponding outcome points~$\{ A, B, C, D \}$
    on the left.}\label{fig:symmetric_circle}%
\end{figure}

Let us follow the scatter plot of~$\mir$ in Figure~\ref{fig:symmetric-circle1}, starting
from the upper left.

\begin{enumerate}
  \item In the beginning nobody is vaccinated, and~$R_0$ is equal to~$2$.
  \item For small costs all strategies have similar
    efficiency. Zooming in  shows that the
    (numerically) optimal strategies split the available vaccine equally between four
    subpopulations that are separated from each other by two subpopulations. This
    corresponds to the ``one in~$3$'' meta-strategies path. As represented in
    Figure~\ref{fig:symmetric-circle2},~$\eta_A$ with outcome point~$A = (C(\eta_A),
    R_e(\eta_A))$ belongs to this path. In particular, note that disconnecting the graph
    is not Pareto optimal for~$12c = 3$ as the disconnecting ``one in~$4$'' strategy gives
    values~$R_e = \sqrt{2} \simeq 1.41$ opposed to the value~$R_e \simeq 1.37$ for the
    ``one in~$3$'' strategy with same cost. However, note that, in agreement with
    \cite[Proposition~6.5]{ddz-Re}, this disconnecting ``one in~$4$'' strategy is also not
    anti-Pareto optimal, since it performs better than the uniform strategy with the same
    cost.
   \item When~$12c = 4$ the circle has been split in four ``islands'' of two interacting
     subpopulations. There is a small interval of values of~$c$ for which it is
     (numerically) optimal to split the additional vaccine uniformly between the four
     ``islands'', and give it entirely to one subpopulation in each island: see point B
     and the associated strategy~$\eta_B$.
  \item Afterwards (see point C), it is in fact better to try and vaccinate all the (say)
    even numbered subpopulations. Therefore, the optimal vaccinations \emph{do not vary
    monotonously} with respect to the amount of available vaccine; in other words,
    distributing vaccine in a greedy way is not optimal. This also suggests that, even
    though the frontier is continuous (in the objective space~$(c,r)$), the set of optimal
    \emph{strategies} may not be connected: the ``one in two'' vaccination strategy of
    point C cannot be linked to ``no vaccination'' strategy by a continuous path of
    optimal strategies. In particular, the Pareto frontier cannot be greedily
    parametrized. The disconnectedness of the set of optimal strategies will be
    established rigorously in Section~\ref{sec:rank-2-reg} for another model.
  \item For~$12c = 6$, that is~$c = \cmir$ as stated in
    Proposition~\ref{prop:cmir-full-circle}, it is possible to vaccinate completely all
    the (say) odd numbered subpopulations, thereby disconnecting the graph completely.
    The infection cannot spread at all.
  \item Even though the problem is symmetric and all subpopulations play the same role,
    the proportional allocation of vaccine is far from optimal; on the contrary, the
    optimal allocations focus on some subpopulations.
\end{enumerate}

Using the same numerical algorithm, we have also computed the anti-Pareto frontier for
this model; see the dashed line in Figure~\ref{fig:pareto-sy}. Although we do not give a
formal proof, the anti-Pareto frontier seems to be perfectly given by the following greedy
parametrization:

\begin{enumerate}
  \item Distribute all the available vaccine supply to one group until it is completely
    immunized.
  \item Once this group is fully vaccinated, distribute the vaccine doses to one of its
    neighbour.
  \item Continue this procedure by vaccinating the neighbour of the last group that has
    been immunized.
  \item When there are only two groups left, split the vaccine equally between
    these two.
\end{enumerate}

\section{The kernel model}\label{sec:setting}

In order to get a finer description of the heterogeneity, we could divide the population
into a growing number of subgroups~$N \to \infty$. The recent advances in graph limits
theory \cite{lovasz_large_2012, backhausz_action_2020} justify describing the transmission
of the disease by a kernel defined on a probability space. We already used this type of
model in \cite{delmas_infinite-dimensional_2020,ddz-theo,ddz-Re}, in particular for an SIS
dynamics, see also \cite[Section~2]{ddz-Re} for other epidemic models. 

Let~$(\Omega, \mathscr{F}, \mu)$ be a probability space that represents the population:
the individuals have features labeled by~$\Omega$ and the infinitesimal size of the
population with feature~$x$ is given by $\mu(\mathrm{d}x)$. Let~$L^2(\mu)$ ($L^2$ for
short) be the space of real-valued measurable functions~$f$ defined on~$\Omega$ such
that~$\norm{f}_2=(\int_\Omega f^2\, \mathrm{d} \mu)^{1/2}$ is finite, where functions
which agree $\mu$-a.s.\ are identified. Let~$L^2_+=\{f\in L^2\, \colon\, f\geq 0\}$ be the
subset of non-negative functions of~$L^2$.  We define a \emph{kernel} on~$\Omega$ as
a~$\R_+$-valued measurable function defined on~$(\Omega^2, \mathscr{F}^{\otimes 2})$. We
will only consider kernels with finite double-norm on~$L^2$:
\begin{equation}\label{eq:2}
  \norm{\kk}_{2,2} =
  \left(
    \int_{\Omega \times \Omega} \kk(x,y)^2 \, \mu(\mathrm{d}x) \mu(\mathrm{d}y)
    \right)^{1/2}< +\infty.
\end{equation}
To a kernel~$\kk$  with finite double norm on~$L^2$, we associate the integral
operator~$T_\kk$ on~$L^2$ defined by:
\begin{equation}\label{eq:def-Tkk}
  T_\kk (g) (x) = \int_\Omega \kk(x,y) g(y)\,\mu(\mathrm{d}y)
  \quad \text{for } g\in L^2 \text{ and } x\in \Omega.
\end{equation}
The operator~$T_\kk$ is bounded, and its  operator norm~$\norm{T_\kk}_{L^2}$ satisfies:
\begin{equation}\label{eq:double-norm}
  \norm{ T_\kk }_{L^2} \leq \norm{\kk}_{2,2}.
\end{equation}
According to~\cite[p. 293]{grobler},~$T_\kk$ is actually compact (and even
Hilbert-Schmidt). A kernel is said to be symmetric if~$\kk(x,y) = \kk(y,x)$,
$\mu(\mathrm{d}x) \mu(\mathrm{d}y)$-almost surely. It is said to be \emph{irreducible} if
for all~$A \in \mathscr{F}$, we have:
\begin{equation}\label{eq:irr}
  \int_{A \times A^c} \kk(x,y) \, \mu(\mathrm{d}x) \mu(\mathrm{d}y) =0 \implies \mu(A) \in
  \{ 0,1 \}.
\end{equation}
If~$\kk$ is not irreducible, it is called \emph{reducible}.

By analogy with the discrete setting and also based on
\cite{delmas_infinite-dimensional_2020, ddz-Re}, we define the basic reproduction number
in this context thanks to the following formula:
\begin{equation}
  R_0 = \rho(T_\kk),
\end{equation}
where~$\rho$ stands for the spectral radius of an operator. According to the Krein-Rutman
theorem,~$R_0$ is an eigenvalue of~$T_\kk$. Besides, there exists left and right
eigenvectors associated to this eigenvalue in~$L^2_+$; such functions are called Perron
eigenfunctions. 

For~$f,g$ two non-negative bounded measurable functions defined on~$\Omega$ and~$\kk$ a
kernel on~$\Omega$ with finite double norm on~$L^2$, we denote by~$f\kk g$ the kernel
on~$\Omega$ defined by:
\begin{equation}\label{eq:def-fkg}
  (f\kk g)(x,y) = f(x)\, \kk(x,y) g(y).
\end{equation}
Since~$f$ and~$g$ are bounded, the kernel~$f \kk g$ has also a finite double norm
on~$L^2$.

Denote by~$\Delta$ the set of measurable functions defined on~$\Omega$ taking values
in~$[0, 1]$. A function~$\eta$ in~$\Delta$ represents a vaccination strategy: $\eta(x)$
represents the proportion of \textbf{non-vaccinated} individuals with feature~$x$. In
particular~$\eta=\un$ (the constant function equal to 1) corresponds
to the absence of vaccination
and~$\eta=\zero$ (the constant function equal to 0) corresponds to the whole population
being vaccinated. The uniform strategies are given by:
\[
  \etau=t \un\quad 
\]
for some~$t\in [0, 1]$, and we denote by~$ \cpu=\{ t\un\, \colon\, t\in [0,1]\}$ the set
of uniform strategies.  \medskip

The (uniform) cost of the vaccination strategy~$\eta \in \Delta$ is given by the total
proportion of vaccinated people, that is:
\begin{equation}
  \label{def:cost}
  C(\eta) = \int_\Omega (1 - \eta) \, \mathrm{d}\mu= 1- \int_\Omega \eta \, \mathrm{d}\mu.
\end{equation}
The measure~$\eta \, \mathrm{d} \mu$ corresponds to the \textit{effective population},
that is the individuals who effectively play a role in the dynamic of the epidemic. The
effective reproduction number is defined by:
\begin{equation}
  \label{eq:def-Re}
  R_e(\eta) = \rho(T_{\kk \eta}),
\end{equation}

We consider the weak topology on~$\Delta$, so that with a slight abuse of notation we
identify~$\Delta$ with $\{\eta\in L^2\, \colon\, 0\leq \eta\leq 1\}$. According to
\cite[Theorem~4.2]{ddz-theo}, the function~$R_e: \eta \mapsto R_e(\eta)$ is continuous
on~$\Delta$ equipped with the weak topology. The compactness of~$\Delta$ for this topology
implies the existence of solutions for Problems~\eqref{eq:prob-min-Re}
and~\eqref{eq:prob-max-Re}. We will conserve the same notation and definitions as in the
discrete setting for: the value functions~$\mir$ and~$\mar$, the minimal/maximal
costs~$\cmir$ and~$\cmar$, the various sets of strategies~$\cp$ and~$\cpa$, and
the various frontiers $\F$ and~$\AF$, see Equations~\eqref{eq:mir}-\eqref{eq:def-cpu} in
Section~\ref{sec:discrete-prob}.

We shall also use the following result from  \cite[Corollary~6.1]{ddz-Re} (recall that a
vaccination strategy is defined up the a.s.\ equality). 

\begin{lemma}\label{lem:k>0-c}
  Let~$\kk$ be a kernel on~$\Omega$ with finite double norm on~$L^2$ such that
  a.s.~$\kk>0$. Then, we have~$\cmar=0$,~$\cmir=1$ and the strategy~$\un$ (resp.~$\zero$)
  is the only Pareto optimal as well as the only anti-Pareto optimal strategy with
  cost~$c=0$ (resp.~$c=1$).
\end{lemma}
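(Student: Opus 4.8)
The plan is to separate the two endpoint assertions, which amount to bookkeeping once the cost is analyzed, from the two substantive equalities $\cmar=0$ and $\cmir=1$, which encode respectively the \emph{strict} monotonicity and the \emph{strict} positivity of the spectral radius under the hypothesis $\kk>0$.

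First I would dispose of the uniqueness claims. Since $\mu$ is a probability measure and $0\le\eta\le1$, the identity $C(\eta)=1-\int_\Omega\eta\,\rd\mu$ shows that $C(\eta)=0$ forces $\int\eta\,\rd\mu=1$, hence $\eta=\un$ a.s., while $C(\eta)=1$ forces $\int\eta\,\rd\mu=0$, hence $\eta=\zero$ a.s. Thus $\un$ (resp.\ $\zero$) is the \emph{only} strategy of cost $0$ (resp.\ $1$), so $\mir(0)=\mar(0)=R_e(\un)=R_0$ and $\mir(1)=\mar(1)=R_e(\zero)=0$. In particular $(0,R_0)\in\F\cap\AF$ and $(1,0)\in\F\cap\AF$, so $\un$ and $\zero$ lie in both $\cp$ and $\cpa$, and uniqueness is immediate. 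It remains to establish the two equalities, for which I would use the Perron eigenfunctions of $T_\kk$ provided by the Krein--Rutman theorem. Note first that $\kk>0$ a.s.\ gives $R_0=\rho(T_\kk)>0$ and that both the right and the left Perron eigenfunctions are a.s.\ strictly positive.

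For $\cmar=0$ I would prove the strict inequality $R_e(\eta)<R_0$ for every $\eta$ with $C(\eta)>0$. Let $w\in L^2_+$ be the left Perron eigenfunction, so $T_\kk^* w=R_0 w$ and $w>0$ a.s. Suppose, for contradiction, that $R_e(\eta)=R_0$ and let $u\in L^2_+$, $u\neq0$, be the right Perron eigenfunction of $T_{\kk\eta}$, i.e.\ $T_\kk(\eta u)=T_{\kk\eta}u=R_0u$. Pairing with $w$ and using $T_\kk^* w=R_0w$ gives
\[
  R_0\langle w,u\rangle=\langle w,T_\kk(\eta u)\rangle=\langle T_\kk^* w,\eta u\rangle=R_0\langle w,\eta u\rangle,
\]
so that $\langle w,(1-\eta)u\rangle=0$. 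Since $w>0$ a.s.\ and $(1-\eta)u\ge0$, this forces $(1-\eta)u=0$ a.s., hence $\eta u=u$ and therefore $T_\kk u=T_\kk(\eta u)=R_0u$. Because $\kk>0$ a.s.\ and $u\ge0$ is not identically zero, $T_\kk u>0$ a.s., whence $u=R_0^{-1}T_\kk u>0$ a.s.; combined with $(1-\eta)u=0$ this yields $\eta=\un$ a.s., contradicting $C(\eta)>0$. Thus $\mar(c)<R_0$ for every $c>0$, which is exactly $\cmar=0$.

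For $\cmir=1$ I would prove $R_e(\eta)>0$ for every $\eta$ with $C(\eta)<1$. Set $A=\{\eta>0\}$; then $\mu(A)=1-C(\eta)>0$. The compression $P_AT_{\kk\eta}P_A$ (multiplication by $\ind{A}$ on both sides) is the integral operator with kernel $\ind{A}(x)\,\kk(x,y)\,\eta(y)\,\ind{A}(y)$, a.s.\ strictly positive on $A\times A$. Viewed on $L^2(A)$ it is a compact positive operator with a.s.\ positive kernel, so by Krein--Rutman its spectral radius $\lambda$ is an eigenvalue with eigenfunction $v>0$; were $\lambda=0$ we would get $0=P_AT_{\kk\eta}P_A v(x)=\int_A\kk(x,y)\eta(y)v(y)\,\rd\mu(y)>0$ for $x\in A$, a contradiction, so $\lambda>0$. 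Since the kernel of $P_AT_{\kk\eta}P_A$ is dominated by that of $T_{\kk\eta}$, monotonicity of the spectral radius for non-negative kernels gives $R_e(\eta)=\rho(T_{\kk\eta})\ge\lambda>0$. Hence $\mir(c)>0$ for all $c<1$, while $\mir(1)=0$, which is $\cmir=1$. The routine ingredients (the endpoint bookkeeping, monotonicity of $\rho$ under domination of non-negative kernels, and existence/positivity of Perron eigenfunctions) are standard or already recorded in the excerpt. The main obstacle is to run the positivity argument correctly in the \emph{non-self-adjoint} setting: in the $\cmar$ step one must couple the right eigenfunction of $T_{\kk\eta}$ with the \emph{left} eigenfunction $w$ of $T_\kk$, since $T_\kk$ need not be symmetric and one cannot simply compare quadratic forms.
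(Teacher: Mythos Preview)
The paper does not prove this lemma; it merely cites it as \cite[Corollary~6.1]{ddz-Re}. So there is no in-paper argument to compare against, and your direct proof is a genuine addition.

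Your endpoint bookkeeping and the $\cmar=0$ argument are clean and correct. The pairing of the right Perron eigenfunction of $T_{\kk\eta}$ against the \emph{left} Perron eigenfunction of $T_\kk$ is exactly the right device in the non-self-adjoint setting, and the chain $(1-\eta)u=0\Rightarrow T_\kk u=R_0 u\Rightarrow u>0\Rightarrow \eta=\un$ goes through without trouble.

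There is, however, a small logical gap in your $\cmir=1$ step. You write: ``by Krein--Rutman its spectral radius $\lambda$ is an eigenvalue with eigenfunction $v>0$; were $\lambda=0$ we would get $0=\int_A\kk\eta v>0$''. The standard Krein--Rutman theorem only produces a positive eigenfunction \emph{once you already know} $\lambda>0$; if $\lambda=0$ there is in general no positive $v$ to feed into your contradiction. As written, the case split is circular. The fix is immediate: the restricted kernel $\ind{A}\,\kk\eta\,\ind{A}$ is a.s.\ positive on $A\times A$ (with $\mu(A)>0$), hence the compressed operator is irreducible, and De~Pagter's theorem (or Jentzsch's theorem for integral operators, or the irreducible version of Krein--Rutman) gives $\lambda>0$ directly. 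After that, the domination $\ind{A}\,\kk\eta\,\ind{A}\le\kk\eta$ and monotonicity of the spectral radius finish the job as you wrote. Once you replace the ``were $\lambda=0$'' sentence by a one-line appeal to irreducibility, the proof is complete.
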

 
\begin{example}[Discrete and continuous representations of a metapopulation model]
  We recall the natural correspondence between metapopulation models (discrete models) and
  kernel models (continuous models) from \cite[Section~7.4.1]{ddz-theo}. Consider a
  metapopulation model with~$N$ groups given by a finite set~$\Omega_{\mathrm{d}} = \{ 1,
  2, \ldots, N \}$ equipped with a probability measure~$\mu_{\mathrm{d}}$ giving the
  relative size of each group and a next generation matrix~$K=(K_{ij}, \, i,j\in
  \Omega_{\mathrm{d}})$. The corresponding discrete kernel~$\kk_{\mathrm{d}}$
  on~$\Omega_{\mathrm{d}}$ is defined by:
  \begin{equation}\label{eq:next-kernel}
    K_{ij} = \kk_{\mathrm{d}}(i,j) \mu_j \quad
    \text{where}\quad \mu_i = \mu_{\mathrm{d}}(\{i\}). 
  \end{equation}
  Then, the matrix~$K\cdot \Diag(\eta)$ is the matrix representation of the
  endomorphism~$T_{\kk_{\mathrm{d}} \eta}$ in the canonical basis of~$\R^N$.

  Following \cite{ddz-theo}, we can also consider a continuous representation on the state
  space~$\Omega_{\mathrm{c}} = [0,1)$ equipped with the Lebesgue
  measure~$\mu_{\mathrm{c}}$. Let~$I_1 = [0, \mu_1)$, $I_2 = [\mu_1, \mu_1 + \mu_2)$,
  \ldots, $I_{N} = [1 - \mu_{N}, 1)$, so that the intervals~$(I_n,\, 1 \leq n \leq  N)$
  form a partition of~$\Omega$. Now define the kernel:
  \begin{equation}
    \kk_{\mathrm{c}}(x,y) = \sum_{1\leq i,j \leq  N} \kk_{\mathrm{d}}(i,j) \ind{I_i
    \times I_j}(x,y).
  \end{equation}
  Denote by~$R_e^\mathrm{d}$ and~$R_e^\mathrm{c}$ the effective reproduction number in the
  discrete and continuous representation models. In the same manner,
  the uniform cost in each model is denoted by~$C^\mathrm{d}$ and~$C^\mathrm{c}$.
  According to~\cite{ddz-theo}, these functions are linked through the following relation:
  \[ R_e^\mathrm{d}(\eta^\mathrm{d}) = R_e^\mathrm{c}\left( \eta^\mathrm{c} \right), \quad
  \text{and} \quad C^\mathrm{d}(\eta^\mathrm{d}) = C^\mathrm{c}(\eta^\mathrm{c}), \]
  for all~$\eta^\mathrm{d} \, \colon \, \Omega_\mathrm{d} \to [0,1]$ and~$\eta^\mathrm{c}
  \, \colon \, \Omega_\mathrm{c} \to [0,1]$ such that:
  \[
    \eta^\mathrm{d}(i) =
    \frac{1}{\mu_i} \int_{I_i} \eta^\mathrm{c}\,
    \mathrm{d}\mu_\mathrm{c} \quad\text{for all}\quad 
    i \in \Omega_{\mathrm{d}} .
  \]
  Let us recall that the Pareto and anti-Pareto frontiers for the two models are the same.

  In Figure~\ref{fig:equiv}, we have plotted the kernels of the continuous models
  associated to the asymmetric and symmetric circles models from
  Sections~\ref{sec:asym-circle} and \ref{sec:fully-sym-circle}.
\end{example}

\begin{figure}
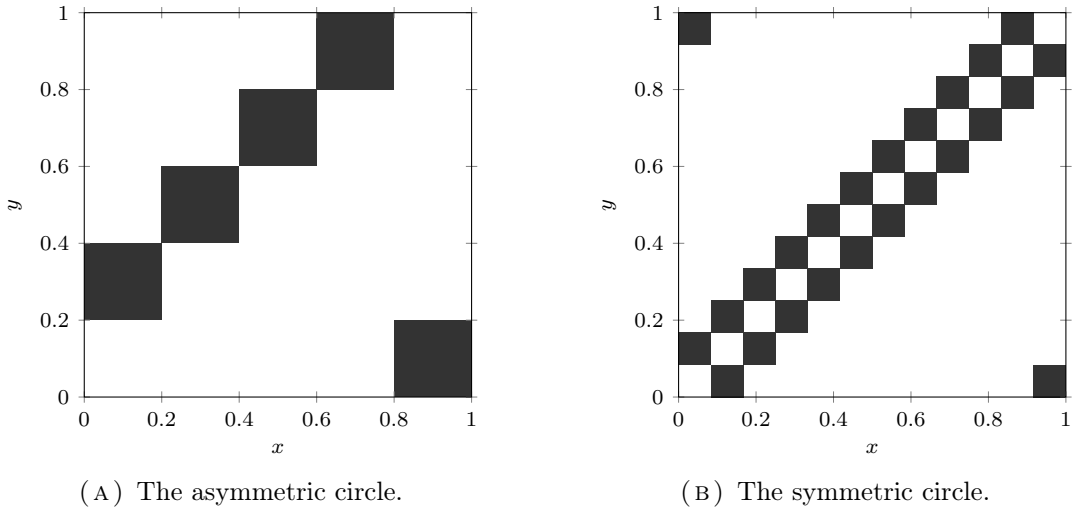

  \begin{subfigure}[c]{.5\textwidth}
    \centering
    \includegraphics[page=3]{asym-circle}
    \caption{The asymmetric circle.}\label{fig:kass} 
  \end{subfigure}%
  \begin{subfigure}[c]{.5\textwidth}
    \centering
    \includegraphics[page=3]{sym-circle}
    \caption{The symmetric circle.}\label{fig:ass}
  \end{subfigure}
  \caption{Kernels~$\kk_\mathrm{c}$ (equal to 0 in the white zone and to
    1 in the black zone) on~$\Omega_\mathrm{c}=[0, 1)$ and~$\mu_\mathrm{c}$ the
    Lebesgue measure of the continuous model associated to 
    discrete metapopulation models.}
    \label{fig:equiv}
\end{figure}

\section{Assortative versus disassortative mixing}\label{sec:ass-disass}

\subsection{Motivation}

We consider a population divided into an at most countable number of groups. Individuals
within the same group interact with intensity~$a$ and individuals in different groups
interact with intensity~$b$. Hence, the model is entirely determined by the
coefficients~$a$ and~$b$ and the size of the different groups. This simple model allows to
study the effect of assortativity, that is, the tendency for individuals to connect with
individuals belonging to their own subgroup. The mixing pattern is called
\emph{assortative} (higher interaction in the same subgroup) if~$a>b$, and
\emph{disassortative} (lower interaction in the same subgroup) when~$b > a$. Our results
illustrate how different the optimal vaccination strategies can be between assortative and
disassortative models, an effect that was previously studied by Galeotti and
Rogers~\cite{StrategicImmunGaleot2013} in a population composed of two groups.

\medskip

When the population is equally split in a finite number of subgroups,  and~$a$ is equal to~$0$, the next-generation matrix
of this model corresponds, up to a multiplicative constant, to the adjacency matrix of a
complete multipartite graph. Recall that an~$m$-partite graph is a graph that can be
colored with~$m$ different colors, so that all edges have their two endpoints colored
differently.  When $m = 2$ these are the so-called bipartite graphs. A complete
multipartite graph is a~$m$-partite graph (for some~$m\in \N^*$) in which there is an edge
between every pair of vertices from different colors.

The complete multipartite graphs have interesting spectral properties. Indeed, Smith
\cite{smith1970some} showed that a graph with at least one edge has its spectral radius as
its only positive eigenvalue if and only if its non-isolated vertices induce a complete
multipartite graph. In \cite{esser_spectrum_1980}, Esser and Harary proved that two
complete~$m$-partite graphs with the same number of nodes are isomorphic if and only if
they have the same spectral radius. More precisely, they obtained a comparison of the
spectral radii of two complete~$m$-partite graphs by comparing the sizes of the sets in
their partitions through majorization; see~\cite[Lemma~3]{esser_spectrum_1980}.

\medskip

The goal of this section is to generalize and complete these results and give a full
picture of the Pareto and anti-Pareto frontiers for the assortative and the disassortative
models.

\subsection{Spectrum and convexity}

We will use an integer intervals notation to represent the considered kernels. For~$i,
j \in \N \cup \{ + \infty \}$, we set~$\lb i,j\rb$ (resp.~$\lb i, j\lb$) for~$[i,
j]\cap (\N \cup \{ + \infty \})$ (resp.~$[i, j) \cap \N$). Let~$N \in \lb 2, + 
\infty \rb$ and~$\Omega = \lb 1, N\rb$ if $N$ is finite and $\Omega =
\lb 1, +\infty \lb$ otherwise. 
 The set~$\Omega$ is endowed with the
discrete~$\sigma$-algebra~$\cf=\cp(\Omega)$ and a probability measure~$\mu$. To 
simplify the notations, we write~$\mu_i$ for~$\mu(\{ i \})$ and
$f_i=f(i)$ for a function~$f$ defined on~$\Omega$. Without loss of generality, we can
suppose that  $\mu_i \geq \mu_j >0$ for all~$ i \leq j$ elements of $\Omega$. We consider the
kernel~$\kk$ defined for~$i,j \in \Omega$ by:
\begin{equation}\label{eq:kernel_mul} 
  \kk(i,j) =
  \begin{cases}
    a \quad \textnormal{if} \quad i=j, \\
    b \quad \textnormal{otherwise},
  \end{cases}
\end{equation}
where~$a$ and~$b$ are two non-negative real numbers.

\medskip  

If~$b = 0$, then the kernel is reducible, see \cite[Section~7]{ddz-Re}, and the effective
reproduction number is given by the following formula: $R_e(\eta) = a \max_{i\in \Omega}
\eta_i \,\mu_i$, for all $\eta=(\eta_i, i\in \Omega)\in \Delta$. This is sufficient to
treat this case and we have~$\cmar = 1 - \mu_1$.

\medskip

From now on, we assume that~$b > 0$. 
The next two results describe the spectrum of~$T_\kk$ in both the assortative and
disassortative case. Notice the spectrum of~$T_\kk$ is real as~$\kk$ is symmetric. 
Recall that $R_0=\rho(T_\kk)$.

\begin{proposition}[Convexity/concavity of~$R_e$]\label{prop:disass-ass-eig}
 Let~$\kk$ be given by \eqref{eq:kernel_mul}, with~$b> 0$ and~$a \geq
 0$.
  \begin{enumerate}[(i)]
    \item\label{pt:ass-Re} \textbf{Assortative model.}
If~$a\geq b> 0$, then the
  operator~$T_\kk$ is positive semi-definite and the function~$R_e$ is convex.
    \item\label{pt:disass-Re} \textbf{Disassortative model.}
    If~$b\geq a\geq 0$ and~$b>0$, then~$R_0$
  is the only positive eigenvalue of~$T_\kk$, and it has multiplicity one. Furthermore,
  the function~$R_e$ is concave.
  \end{enumerate}
\end{proposition}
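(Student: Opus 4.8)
The plan is to exploit the elementary rank-one structure of the kernel. Writing $\kk(i,j)=b+(a-b)\ind{\{i=j\}}$, the operator splits as $T_\kk=(a-b)\,M+b\,P$, where $M$ is multiplication by the bounded nonnegative function $i\mapsto\mu_i$ and $P=\un\otimes\un$ is the rank-one orthogonal projection $Pg=\langle g,\un\rangle_\mu\,\un$ onto the constants; both $M$ and $P$ are positive semi-definite. The spectral assertions follow at once. In the assortative case $a\geq b$, $T_\kk$ is a nonnegative combination of $M$ and $P$, hence $T_\kk\succeq 0$. In the disassortative case $b\geq a$, for every $g$ with $\langle g,\un\rangle_\mu=0$ one has $\langle T_\kk g,g\rangle_\mu=-(b-a)\langle Mg,g\rangle_\mu\leq 0$, so $T_\kk$ is negative semi-definite on a subspace of codimension one; by the min--max principle it has at most one positive eigenvalue. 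Since $\kk\geq b>0$ the kernel is irreducible, so Krein--Rutman gives that $R_0=\rho(T_\kk)>0$ is a simple eigenvalue, whence it is the only positive one, with multiplicity one.

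For the convexity of $R_e$ in the assortative case I would use that $T_\kk\succeq 0$ admits a self-adjoint square root $T_\kk^{1/2}$. Since $R_e(\eta)=\rho(T_{\kk\eta})=\rho(T_\kk M_\eta)$, where $M_\eta$ is multiplication by $\eta$, and since $AB$ and $BA$ share the same nonzero spectrum (applied to $A=T_\kk^{1/2}$ and $B=T_\kk^{1/2}M_\eta$), we get $R_e(\eta)=\rho\big(T_\kk^{1/2}M_\eta T_\kk^{1/2}\big)$. The latter operator is self-adjoint and positive semi-definite (as $\eta\geq 0$), so its spectral radius is its largest eigenvalue and admits the Rayleigh representation
\[
  R_e(\eta)=\sup_{\norm{f}_2=1}\big\langle T_\kk^{1/2}M_\eta T_\kk^{1/2}f,f\big\rangle_\mu
  =\sup_{\norm{f}_2=1}\int_\Omega \eta\,\big(T_\kk^{1/2}f\big)^2\,\rd\mu .
\]
For each fixed $f$ the integral is a linear functional of $\eta$, and a supremum of linear functionals is convex; hence $R_e$ is convex. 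This argument uses nothing beyond $T_\kk\succeq 0$.

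For the concavity in the disassortative case the same Rayleigh formula is useless, since it exhibits $R_e$ as a supremum; instead I would use that $R_e$ is now the \emph{unique} positive eigenvalue. Feeding the rank-one structure into $T_{\kk\eta}g=\lambda g$ and eliminating $g$ shows that, for $\lambda>0$, one has $R_e(\eta)=\lambda$ if and only if
\[
  \Psi(\lambda;\eta):=b\sum_{i\in\Omega}\frac{\eta_i\mu_i}{\lambda+(b-a)\,\eta_i\mu_i}=1 ,
\]
with $\lambda\mapsto\Psi(\lambda;\eta)$ strictly decreasing on $(0,\infty)$. Consequently, for every $\lambda_0>0$,
\[
  \{\eta\geq 0\,\colon\, R_e(\eta)>\lambda_0\}=\{\eta\geq 0\,\colon\,\Psi(\lambda_0;\eta)>1\}.
\]
Each summand $t\mapsto bt/(\lambda_0+(b-a)t)$ is increasing and concave on $\R_+$, and $t=\eta_i\mu_i$ is linear in $\eta$, so $\eta\mapsto\Psi(\lambda_0;\eta)$ is concave (its series converging since $\sum_i\mu_i=1$); the superlevel sets of $R_e$ are therefore convex, i.e.\ $R_e$ is quasi-concave. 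Finally $R_e$ is positively homogeneous of degree one by homogeneity of the spectral radius, and a nonnegative, positively $1$-homogeneous, quasi-concave function on the convex cone $L^2_+$ is concave: for $R_e(\eta^0),R_e(\eta^1)>0$, applying quasi-concavity at $\big(\eta^0+\eta^1\big)/\big(R_e(\eta^0)+R_e(\eta^1)\big)$ yields super-additivity, which with $1$-homogeneity gives concavity. The degenerate case $a=b$ is immediate, since then $R_e(\eta)=b\int_\Omega\eta\,\rd\mu$ is affine.

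The crux is the concavity statement. The largest eigenvalue is intrinsically a maximum, so the natural variational formula only ever delivers convexity, and genuine concavity must come from the single-positive-eigenvalue feature established in the first paragraph. The step I expect to require the most care is upgrading the convexity of the superlevel sets (quasi-concavity) to full concavity; the $1$-homogeneity of $R_e$ is exactly what makes this possible. One must also handle the boundary points where $\eta$ vanishes on part of $\Omega$ and, in the countable case, the summability of $\Psi$ — both are controlled by the already-known continuity of $R_e$ and by $\sum_i\mu_i=1$.
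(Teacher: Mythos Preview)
Your proof is correct but follows a different route from the paper's. For the spectral claims, the paper computes the quadratic form directly in case~(i), and in case~(ii) applies Sylvester's law of inertia to the finite truncations $\Diag(\mu)^{1/2}M_n\,\Diag(\mu)^{1/2}$ (same signature as $M_n$, hence one positive eigenvalue) and then passes to the limit when $N=\infty$; your decomposition $T_\kk=(a-b)M+bP$ together with the min--max argument on $\un^\perp$ is more direct and handles all $N$ at once. One small slip: in the disassortative regime $\kk(i,i)=a\leq b$, so ``$\kk\geq b>0$'' is false when $a<b$; irreducibility still holds because the off-diagonal values are $b>0$. For the convexity/concavity of $R_e$ the paper gives no argument at all --- it simply cites \cite[Theorem~5.5]{ddz-Re}, which states that for symmetric $\kk$, positive semi-definiteness of $T_\kk$ yields convexity of $R_e$, while a simple $R_0$ with all other eigenvalues non-positive yields concavity. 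Your square-root/Rayleigh argument for convexity is essentially the standard proof of that general fact; your secular-equation route to concavity (convex superlevel sets of $\Psi(\lambda_0;\cdot)$, then quasi-concave plus $1$-homogeneous implies concave) is an elegant self-contained alternative that exploits the rank-one perturbation structure peculiar to this kernel, at the price of not generalizing to arbitrary symmetric kernels with a single positive eigenvalue.
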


 In the following proof, we shall  consider the symmetric matrix~$M_n$
of size~$n \times n$, with~$n\in \N^*$, given by:
\[
  M_n(i,j) =
  \begin{cases}
    a \quad \textnormal{if} \quad i=j, \\
    b \quad \textnormal{otherwise}.
  \end{cases}
\]
The matrix~$M_n$ is the sum of~$b$ times the all-ones matrix and~$a - b$ times the
identity matrix. Thus,~$M_n$ has two distinct eigenvalues:~$nb + a$ with multiplicity~$1$
and~$a-b$ with multiplicity~$n-1$.

\begin{proof}
   We first prove~\ref{pt:ass-Re}.   For any~$g\in L^2$, we have:
  \begin{equation*}
    \int_{\Omega \times \Omega} g(x) \kk(x,y) g(y) \, \mu(\mathrm{d}x) \mu(\mathrm{d} y) =
    a \sum_{i \in \Omega} g_i^2 \mu_i^2 + b \sum_{i \neq
    j} g_i g_j \, \mu_i \mu_j \geq b \norm{g}_2^2.
  \end{equation*}
  This implies that~$T_\kk$ is positive semi-definite. Thus, as~$\kk$ is symmetric, the
  function~$R_e$ is convex, thanks to
  \cite[Theorem~5.5]{ddz-Re}. \medskip

We now prove~\ref{pt:disass-Re}. 
   We give a direct proof when~$N$ is finite, and use an approximation procedure
  for~$N=\infty$. We first assume that~$N$ is finite. For~$n \leq  N$, let~$v_n =
  \ind{\lb 1, n \rb}$ and set~$T_n = T_{v_n \kk v_n}$. The non-null eigenvalues
  of~$T_n$ (with their multiplicity) are the eigenvalues of the matrix~$M_n \cdot
  \Diag_n(\mu)$, where~$\Diag_n (\mu)$ is the diagonal~$n\times n$-matrix
  with~$(\mu_1, \ldots, \mu_n)$ on the diagonal. Thanks to
  \cite[Theorem~1.3.22]{horn2012matrix}, these are also the eigenvalues of the
  matrix~$Q_n=\Diag_n (\mu)^{1/2} \cdot M_n \cdot \Diag_n(\mu)^{1/2}$. By Sylvester's law
  of inertia \cite[Theorem~4.5.8]{horn2012matrix}, the matrix~$Q_n$ has the same signature
  as the symmetric matrix~$M_n$. In particular, since we have supposed~$a-b\leq 0$,~$M_n$
  has only one positive eigenvalue. Thus~$Q_n$ has only one positive eigenvalue: thanks to
  the Perron-Frobenius theory, it is its spectral radius. This concludes the proof
  when~$N$ is finite by choosing~$n=N$.

  If~$N=\infty$, we consider the limit~$n\to N$. Since:
  \[
    \lim_{n \to \infty } \norm{\kk - v_n \kk v_n}_{2,2} = 0,
  \]
 the spectrum of~$T_n$ converges to the spectrum of~$T_\kk$, with respect to the Hausdorff
  distance, and the multiplicity on the non-zero eigenvalues also converge, see
  \cite[Corollary~3.2]{ddz-Re}. This shows that~$\rho(T_\kk)$ is the only positive
  eigenvalue of~$T_\kk$, and it has multiplicity one. Since~$\kk$ is symmetric, we deduce
  the concavity of the function~$R_e$ from \cite[Theorem~5.5]{ddz-Re}.
\end{proof}

\subsection{Explicit description of the Pareto and anti-Pareto frontiers}

For~$c\in[0,1]$, we define an ``\emph{horizontal vaccination}'' $\ho(c)\in\Delta$ with
cost~$c$ in the following manner. Rather than defining directly the proportion of
non-vaccinated people in each class, it will be convenient to define first the resulting
effective population size, which will be denoted by $\xi$. For all~$\alpha\in[0,\mu_1]$,
let $\xi^\mathrm{h}(\alpha)\in \Delta$ be defined by:
\begin{equation}\label{eq:xih}
  \xi_i^\mathrm{h}(\alpha) = \min(\alpha, \mu_i), \quad i \in \Omega.
\end{equation}
For all~$i \in \Omega$,~$\xi_i^\mathrm{h}(\alpha)$ is a non-decreasing and continuous
function of~$\alpha$. The map~$\alpha\mapsto \sum_i \xi_i^\mathrm{h}(\alpha)$ is
continuous and increasing from~$[0,\mu_1]$ to~$[0,1]$, so for any~$c\in[0,1]$, there
exists a unique~$\alpha_c$ such that~$\sum_i \xi_i^\mathrm{h}(\alpha_c) = 1-c$. We then
define the horizontal vaccination profile~$\ho(c)\in \Delta$ by:
\begin{equation}\label{eq:ho}
  \ho_i(c) = \xi_i^\mathrm{h}(\alpha_c)/ \mu_i, \quad i \in \Omega.
\end{equation}
In words, it consists in vaccinating in  such a way that the quantity of
the non-vaccinated individuals~$\xi_i^\mathrm{h}  = \eta_i\mu_i$ in each
subpopulation is always less than the ``horizontal'' threshold~$\alpha$:
see  Figure~\ref{fig:p1}. The cost of the vaccination strategy $
\ho(c)$ is indeed $c$. 
Note  that  $\ho(0) =  \un$ (no  vaccination),
whereas~$\ho(1)   =   \zero$   (full    vaccination),   and   that   the
path~$c \mapsto \ho(c)$ is greedy. We denote its range by~$\hoPath$.

\medskip

For~$c\in[0,1]$, we define similarly a ``\emph{vertical vaccination}''~$\ver(c)\in\Delta$
with cost~$c$.  First let us define for~$\beta\in [0,N]$:
\begin{equation}\label{eq:xiv}
  \xi^{\mathrm{v}}_i(\beta) = \mu_i \cdot \min(1, (\beta+1 - i)_+), \quad i \in \Omega.
\end{equation}
The map~$\beta\mapsto \sum_i \xi^{\mathrm{v}}_i(\beta)$ is increasing  and continuous
from~$[0,N]$ to~$[0,1]$, so for any~$c\in[0,1]$  there exists a unique~$\beta_c$ such
that~$\sum_i \xi^{\mathrm{v}}_i(\beta_c) = 1-c$. We then define the vertical vaccine
profile~$\ver(c)$ by:
\begin{equation}\label{eq:ver}
  \ver_i(c) = \xi^{\mathrm{v}}_i(\beta_c)/\mu_i, \quad i \in \Omega.
\end{equation}
In words, if~$\lceil \beta\rceil = \ell$, this consists in vaccinating all 
subpopulations~$j$ for~$j>\ell$, and a fraction~$\lceil \beta\rceil-\beta$ of the 
subpopulation~$\ell$, see Figure~\ref{fig:p2} for a graphical representation.
The cost of the vaccination strategy $
\ver(c)$ is by construction equal to $c$.

For all~$i \in \Omega$,~$\ver_i(c)$ is a non-increasing and continuous function of~$c$.
Just as in the horizontal case, we have~$\ver(0)=\un$ (no vaccination), $\ver(1)=\zero$
(full vaccination), and the path~$c \mapsto \ver(\beta(c))$ is also greedy. We denote its
range by~$\verPath$.

\begin{figure}
  \begin{subfigure}[T]{.5\textwidth}
    \centering
    \includegraphics[page=2]{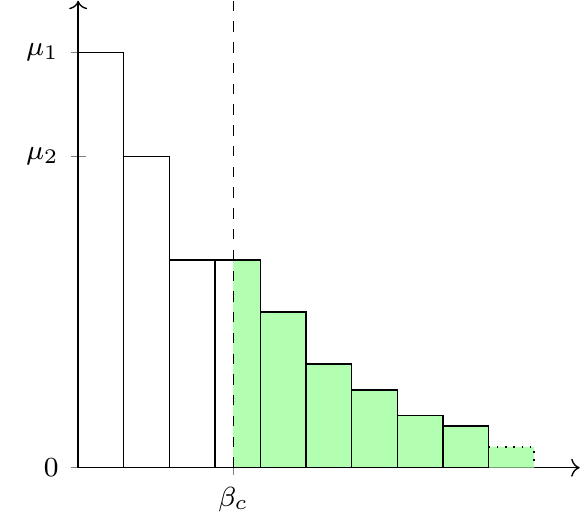}
    \caption{Representation of the greedy path~$\hoPath$.}\label{fig:p1}
  \end{subfigure}%
  \begin{subfigure}[T]{.5\textwidth}
    \centering
    \includegraphics[page=1]{parametrization}
    \caption{Representation of the greedy path~$\verPath$.}\label{fig:p2}
  \end{subfigure}%
  \caption{Greedy parametrization of the (anti-)Pareto front. The bar plot represents the
  measure~$\mu$. The proportion of green in each bar correspond to the proportion of
  vaccinated individuals in each subpopulation.}\label{fig:p}
\end{figure}

\medskip

These two paths give a greedy parametrization of the Pareto and anti-Pareto frontiers for
the assortative and disassortative models:  more explicitly, we have the following result,
whose proof can be found in~Section~\ref{sec:proof-assortative}. 

\begin{theorem}[Assortative vs disassortative]
  \label{th:dis-assortative}
  Let~$\kk$ be given by \eqref{eq:kernel_mul}, with~$b> 0$ and~$a \geq 0$.
  \begin{enumerate}[(i)]
    \item\label{pt:ass}\textbf{Assortative model.} If~$a\geq b>0$, then~$\verPath$
      and~$\hoPath$ are greedy parametrizations of the anti-Pareto and Pareto frontiers
      respectively.
    \item\label{pt:disass}\textbf{Disassortative model.} If~$b\geq a>0$, then~$\verPath$
      and~$\hoPath$ are greedy parametrizations of the Pareto and anti-Pareto frontiers
      respectively.
    \item\label{pt:multi}\textbf{Complete multipartite model.} If~$a =0$ and~$b > 0$,
      then~$\hoPath$ is a greedy parametrization  of the anti-Pareto frontier and the
      subset of strategies~$\eta \in \verPath$ such that~$C(\eta) \leq 1 - \mu_0$ is a
      greedy parametrization of the Pareto frontier. In particular, we have~$\cmir = 1 -
      \mu_1$ and~$\cmar=0$.
  \end{enumerate}
\end{theorem}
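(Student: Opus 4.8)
The plan is to reparametrize strategies by the effective population $\xi=(\xi_i)_{i\in\Omega}$ with $\xi_i=\eta_i\mu_i$, so that the cost constraint becomes the linear relation $\sum_i\xi_i=1-c$ on the box-simplex $\{\xi:0\le\xi_i\le\mu_i,\ \sum_i\xi_i=1-c\}$, and then to exploit that $R_e$ becomes a \emph{symmetric} function of $\xi$. Indeed, exactly as in the proof of Proposition~\ref{prop:disass-ass-eig} (via \cite[Theorem~1.3.22]{horn2012matrix}), the non-zero spectrum of $T_{\kk\eta}$ coincides with that of the symmetric operator $Q(\xi)$ with entries $Q(\xi)_{ij}=\sqrt{\xi_i}\,\kk(i,j)\,\sqrt{\xi_j}$, which by \eqref{eq:kernel_mul} has the rank-one-plus-diagonal form $Q(\xi)=(a-b)\Diag(\xi)+b\,\sqrt{\xi}\,\sqrt{\xi}^{\top}$ with $\sqrt{\xi}=(\sqrt{\xi_i})_i$. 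Permuting the coordinates of $\xi$ conjugates $Q(\xi)$ by a permutation, so $R_e(\xi)=\rho(Q(\xi))$ is invariant under permutations of $\xi$. Since $\xi\mapsto\eta$ is a diagonal linear change of variables, the convexity (resp.\ concavity) of $R_e$ given by Proposition~\ref{prop:disass-ass-eig} in the $\eta$ variable transfers to convexity (resp.\ concavity) in the $\xi$ variable.

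A symmetric convex function is Schur-convex and a symmetric concave function is Schur-concave; hence $\xi\mapsto R_e(\xi)$ is Schur-convex in the assortative case $a\ge b>0$ and Schur-concave in the disassortative and multipartite cases $b\ge a\ge 0$. It then remains to identify the majorization-extremal points of the box-simplex. I claim that the horizontal profile $\ho(c)$, given by $\xi^{\mathrm h}_i=\min(\alpha_c,\mu_i)$ in \eqref{eq:xih}, is the majorization-least feasible point, while the vertical profile $\ver(c)$ of \eqref{eq:xiv}, which fills the largest groups to capacity, is the majorization-greatest. The key verification is that one can pass from an arbitrary feasible $\xi$ to $\ho(c)$ by mass transfers from a richer to a poorer coordinate (Hardy--Littlewood--P\'olya transfers) that never leave the box-simplex---moving mass up to the common water level $\alpha_c$ keeps each coordinate in $[0,\mu_i]$---and symmetrically to pass to $\ver(c)$ by spreading transfers that fill the large groups without exceeding their caps. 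Each equalizing transfer does not increase a Schur-convex function and does not decrease a Schur-concave one, and conversely for spreading transfers; this yields $R_e(\ho(c))\le R_e(\xi)\le R_e(\ver(c))$ in the assortative case and the reverse chain in the disassortative case, for every feasible $\xi$ of cost $c$.

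This proves that $\hoPath$ and $\verPath$ are pointwise optimal for each cost, which settles \ref{pt:ass} and \ref{pt:disass}: since both paths were already seen to be monotone and continuous with $\ho(0)=\ver(0)=\un$ and $\ho(1)=\ver(1)=\zero$, they are by definition greedy parametrizations of the corresponding frontiers. For the multipartite case \ref{pt:multi} with $a=0$ the function $R_e$ is Schur-concave, so $\verPath$ is Pareto and $\hoPath$ anti-Pareto exactly as above; the only extra point is to compute $\cmir$. When the effective population is concentrated on the largest group, $\xi_1=\mu_1$ and $\xi_i=0$ for $i\ge2$ (cost $1-\mu_1$), the diagonal term $(a-b)\xi_1=-b\mu_1$ cancels the rank-one term $b\xi_1$ at position $(1,1)$ and all off-diagonal entries vanish, so $Q(\xi)=0$ and $R_e=0$; for any smaller cost the vertical profile leaves at least two groups with positive effective size, producing a strictly positive off-diagonal entry $b\sqrt{\xi_i\xi_j}>0$ and hence $R_e>0$. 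As $\verPath$ is Pareto optimal this gives $\cmir=1-\mu_1$, while irreducibility of $\kk$ (a consequence of $b>0$) yields $\cmar=0$ by the discussion following \eqref{eq:def-cmar}; the informative part of the Pareto frontier is thus parametrized by $\{\eta\in\verPath:\ C(\eta)\le 1-\mu_1\}$.

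The main obstacle is the majorization bookkeeping in the second step: one must check carefully that the equalizing and spreading transfers connecting an arbitrary feasible $\xi$ to $\ho(c)$ and to $\ver(c)$ remain inside the box-simplex (the upper bounds $\xi_i\le\mu_i$ are exactly respected by the water-filling level and by filling the large groups first), and that the argument extends to the countable case $N=\infty$. For the latter I would proceed as in the proof of Proposition~\ref{prop:disass-ass-eig}, truncating to the first $n$ groups, applying the finite-dimensional majorization result, and passing to the limit using the convergence of the spectrum and the $\ell^1$-summability of $\xi$.
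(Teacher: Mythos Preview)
Your approach is essentially identical to the paper's: reparametrize by $\xi_i=\eta_i\mu_i$, establish Schur-convexity/concavity of $\xi\mapsto\rho(M_n\cdot\Diag(\xi))$ via symmetry plus the convexity/concavity of Proposition~\ref{prop:disass-ass-eig}, identify $\xi^{\mathrm h}$ and $\xi^{\mathrm v}$ as the majorization extremes of the box-simplex (the paper verifies this directly via partial sums and the characterization~\eqref{eq:majorization_(x-a)+} rather than transfers), and handle $N=\infty$ by truncation and continuity of $R_e$. Your stated ``main obstacle''---that the Robin Hood transfers must remain inside the box---is in fact unnecessary: since $\Theta_n$ is homogeneous it is Schur-convex/concave on all of $\R_+^n$, so only the endpoint relations $\xi^{\mathrm h}\prec\xi\prec\xi^{\mathrm v}$ are needed, and these follow from the box constraints $\xi_i\le\mu_i$ without tracking any intermediate path.
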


Notice that~$\cmar=0$ and~$\cmir=1$ in cases (i) and (ii) as~$\kk$ is
positive thanks to Lemma~\ref{lem:k>0-c}. 

\begin{remark}[Highest Degree vaccination]
  The effective degree function of a symmetric kernel~$\kk$ at~$\eta \in \Delta$ is the
  function~$\mathsf{deg}_\eta$  defined  on~$\Omega$ by:
  \begin{equation}\label{eq:def-degh1}
    \mathsf{deg}_\eta(x) = \int_\Omega \kk(x,y) \eta(y) \, \mu(\mathrm{d}y).
  \end{equation}
  When~$\eta = \un$, it is simply called the degree of~$\kk$ and is denoted
  by~$\mathsf{deg}$. In our model, the effective degree of the subgroup~$i$ is given by
  \begin{equation}\label{eq:def-degh}
    \mathsf{deg}_\eta(i) = a\eta_ i \mu_i + b \sum_{\ell\neq i} \eta_\ell
    \mu_\ell ,
  \end{equation}
  and    thus    the   degree    of    the    subgroup~$i$   is    given
  by~$\mathsf{deg}(i)  =   (a-b)  \mu_i  +  b$.    As~$\mu_i\geq  \mu_j$
  for~$ i< j$  elements of~$\Omega$, we deduce that  the degree function
  in   monotone:   non-increasing   in   the   assortative   model   and
  non-decreasing  in  the  disassortative  model.  The  group  with  the
  highest  degree therefore  corresponds  to the  largest  group in  the
  assortative  model  and the  smallest  group  (if  it exists)  in  the
  disassortative model.

  Consider the assortative model where all the groups have different size,
  \textit{i.e.},~$\mu_1 > \mu_2 > \ldots$ Following the parametrization~$c\mapsto \ho(c)$,
  starting from~$c=0$, will first decrease the effective size of the group~$1$ (the group
  with the highest degree) until it reaches the effective degree of group~$2$ (with the
  second highest degree). Once these two groups share the same effective degree which
  corresponds to reaching~$\mu_1 \ho_1=\mu_2$, they are vaccinated uniformly (that is,
  ensuring that they keep the same effective degree: using \eqref{eq:def-degh} this
  corresponds to ~$\mu_1 \ho_1=\mu_2 \ho_2$) until their effective degree is equal to the
  third highest degree, and so on and so forth.

  In the disassortative model, the function~$\mathsf{deg}_\eta$ remains (strictly)
  increasing when the vaccination strategies in~$\verPath$ are applied. In particular,
  if~$\mu_1 > \mu_2 > \ldots$, then the optimal strategies prioritize the groups with the
  higher effective degree until they are completely immunized. If multiple groups share
  the same degree, it is optimal to give all available doses to one group. \medskip
  
  In conclusion, in both models, the optimal vaccination consists in vaccinating the
  groups with the highest effective degree in priority if this group is unique. But if
  multiple groups share the same degree (\textit{i.e.}, have the same size), the optimal
  strategies differ between the assortative and the disassortative case. In the
  assortative case, groups with the same size must be vaccinated uniformly while in the
  disassortive case, all the vaccine doses shall be given to one group until it is
  completely vaccinated.
\end{remark}

\begin{example}[Group sizes following a dyadic distribution]
  Let~$N = \infty$,~$\Omega=\N^*$ and~$\mu_i = 2^{-i}$ for all~$i \in \Omega$. Following
  \cite[Section~7.4.1]{ddz-theo}, we will couple this discrete model with a continuum
  model for a better visualization on the figures. Let~$\Omega_c = [0,1)$ be equipped with
  the Borel~$\sigma$-field~$\mathscr{F}_c$ and the Lebesgue measure~$\mu_c$.  The
  set~$\Omega_c$ is partitionned into a countable number of intervals~$I_i = [ 1 - 2^{-i+1},
  1 - 2^{-i})$, for~$i \in \N^*$, so that~$\mu_c(I_i) = \mu_i$. The kernel of the
  continuous model corresponding to~$\kk$ in \eqref{eq:kernel_mul} is given by:
  \begin{equation}\label{eq:def:kc}
    \kk_c = (a-b) \sum_{i\in \N^*} \ind{I_i \times I_i} + b \un.
  \end{equation}
  The kernel~$\kk_c$ is plotted in Figures~\ref{fig:kernel-ass}, \ref{fig:kernel-disass}
  and~\ref{fig:kernel-multi} for different values of~$a$ and~$b$ corresponding
  respectively to the assortative, the disassortative and the complete multipartite case
  corresponding to points~\ref{pt:ass}, \ref{pt:disass} and \ref{pt:multi} of
  Theorem~\ref{th:dis-assortative} respectively. Their respective Pareto and anti-Pareto
  frontiers are plotted in Figures~\ref{fig:pareto-ass}, \ref{fig:pareto-disass} and
  \ref{fig:pareto-multi}, using a finite-dimensional approximation of the kernel~$\kk$ and
  the power iteration method. In Figure~\ref{fig:pareto-multi}, the value of~$\cmir$ is
  equal to~$1-\mu_1=1/2$. With this continuous representation of the population, the
  set~$\verPath$ corresponds to the strategies of the form~$\ind{[0, t)}$ for~$t
  \in [0,1]$.

  Notice that the Pareto frontier in the assortative case is convex. This is consistent
  with \cite[Proposition~6.6]{ddz-theo} since the cost function is affine and~$R_e$ is
  convex when~$a \geq b$; see  Proposition~\ref{prop:disass-ass-eig}~\ref{pt:ass-Re}. In the same
  manner, the anti-Pareto frontier in the disassortative and the multipartite cases is
  concave. Once again, this is  consistent with \cite[Proposition~6.6]{ddz-theo} since the
  cost function is affine and~$R_e$ is concave when~$b \geq a$; see
  Proposition~\ref{prop:disass-ass-eig}~\ref{pt:disass-Re}.
\end{example}

\begin{figure}
  \begin{subfigure}[T]{.5\textwidth}
    \centering
    \includegraphics[page=1]{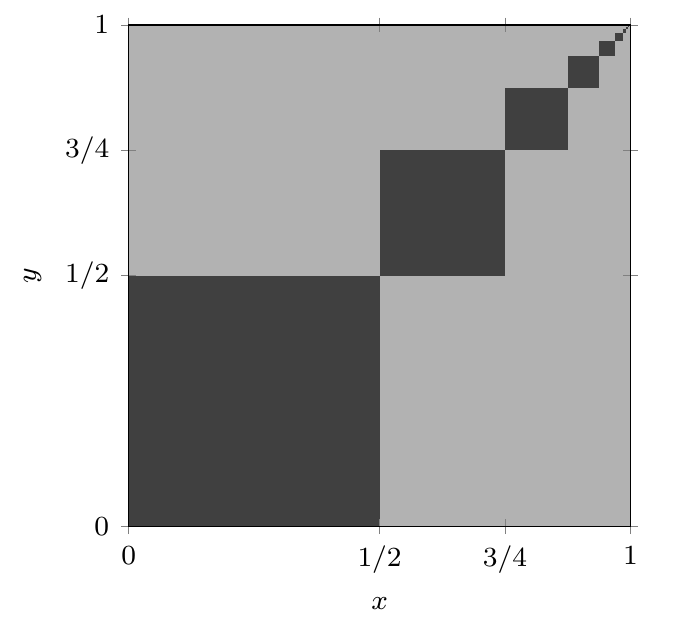}
    \caption{Grayplot of the kernel~$\kk_c$ from \eqref{eq:def:kc} on~$\Omega_c = [0,1)$ with~$\kk_c=a=5$
      on the dark gray zone, and~$\kk_c=b=2$ on the light gray zone.}
    \label{fig:kernel-ass} 
  \end{subfigure}%
  \begin{subfigure}[T]{.5\textwidth}
    \centering
    \includegraphics[page=2]{assortative}
    \caption{Solid line: the Pareto frontier~$\F$; dashed line: the anti-Pareto frontier~$\AF$;
      dotted line: path of the uniform strategies; blue
    region: all possible outcomes~$\FF$.}
    \label{fig:pareto-ass}
  \end{subfigure}%
  \caption{An assortative model.}\label{fig:optim-ass}
\end{figure}

\begin{figure}
  \begin{subfigure}[T]{.5\textwidth}
    \centering
    \includegraphics[page=1]{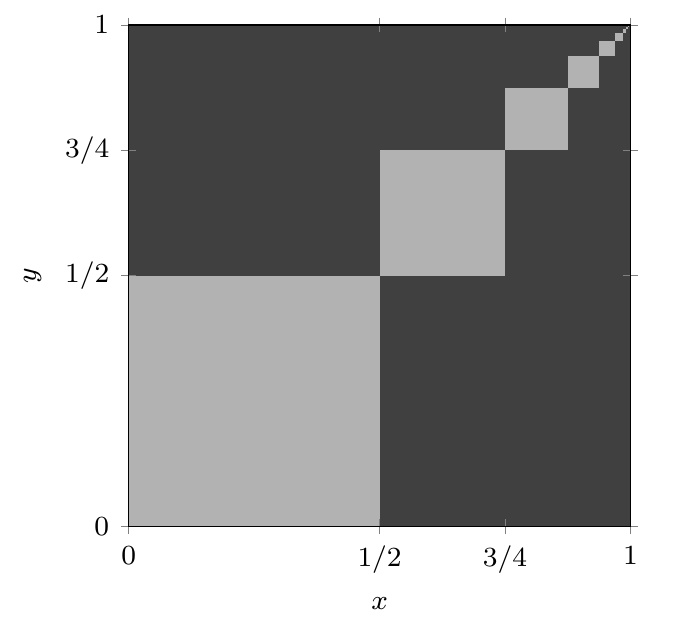}
    \caption{Grayplot of the kernel~$\kk_c$ from \eqref{eq:def:kc}
      on~$\Omega_c = [0,1)$ with~$\kk_c=a=2$ on the light gray
    zone, and~$\kk_c=b=5$ on the dark gray
    zone.}
  \label{fig:kernel-disass} 
  \end{subfigure}%
  \begin{subfigure}[T]{.5\textwidth}
    \centering
    \includegraphics[page=2]{disassortative}
    \caption{Solid line: the Pareto frontier~$\F$; dashed line: the anti-Pareto frontier~$\AF$;
      dotted line: path of the uniform strategies; blue
    region: all possible outcomes~$\FF$.}\label{fig:pareto-disass}
  \end{subfigure}%
  \caption{A disassortative model.}\label{fig:optim-disass}
\end{figure}

\begin{figure}
  \begin{subfigure}[T]{.5\textwidth}
    \centering
    \includegraphics[page=1]{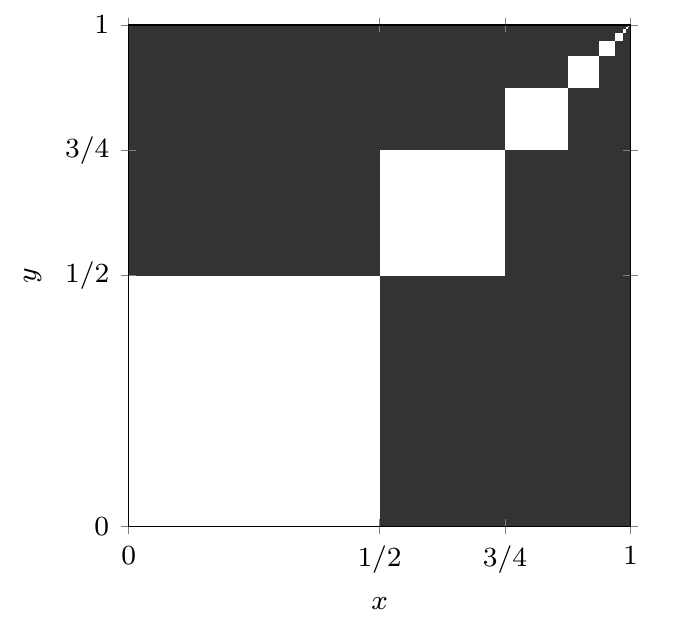}
    \caption{Grayplot of the kernel~$\kk_c$ from \eqref{eq:def:kc} on~$\Omega_c = [0,1)$
      with~$\kk_c=a=0$ (on the white zone) and~$\kk_c=b=6$ (on the black zone).}
    \label{fig:kernel-multi} 
  \end{subfigure}%
  \begin{subfigure}[T]{.5\textwidth}
    \centering
    \includegraphics[page=2]{multipartite}
    \caption{Solid line: the Pareto frontier~$\F$; dashed line: the anti-Pareto frontier~$\AF$;
      dotted line: path of the uniform strategies; blue
    region: all possible outcomes~$\FF$.}
    \label{fig:pareto-multi}
  \end{subfigure}%
  \caption{An example of the complete multipartite model.}\label{fig:optim-multi}
\end{figure}

\subsection{Proof of Theorem~\ref{th:dis-assortative}}\label{sec:proof-assortative}

After recalling known facts of majorization theory, we first consider the finite dimension
models, and then the general case by an approximation argument. 

\subsubsection{Majorization}

In this section, we recall briefly some definitions and results from majorization theory,
and refer to~\cite{MajorizationAnArnold1987, InequalitiesTMarsha2011} for an extensive
treatment of this topic. 

Let~$n   \geq    1$   and~$\xi,\chi    \in   \R^{n}_+$.    We   denote
by~$\xi^\downarrow$   and  $\chi^\downarrow$   their  respective   order
statistics,  that   is  the   vectors  in~$\R_+^{n}$  with   the  same
components,  but  sorted  in  descending order.  We  say  that~$\xi$  is
\emph{majorized} by~$\chi$, and write~$\xi \prec \chi$, if:
\begin{equation}\label{eq:majorization1}
  \sum_{j=1}^{i} \xi_j^\downarrow \leq \sum_{j=1}^i \chi_j^\downarrow
  \quad\text{for all~$i\in \{1, \ldots, n\}$, and}\quad
  \sum_{j=1}^{n} \xi_j = \sum_{j=1}^{n} \chi_j.
\end{equation}
Among the various characterizations of majorization, we will use the following by Hardy,
Littlewood and P\'{o}lya; see~\cite[Proposition~I.4.B.3]{InequalitiesTMarsha2011}:
\begin{equation}\label{eq:majorization_(x-a)+}
  \xi \prec \chi \iff
    \sum_i (\xi_i - t)_+ \leq \sum_i (\chi_i - t)_+ \quad\text{for all}\quad t\in \R_+,
\end{equation}
where~$u_+  =   \max(u,  0)$,   for  all~$u   \in  \R$.   A  real-valued
function~$\Theta$ defined on~$\R_+^{n}$  is called \emph{Schur-convex}
if    it   is    non-decreasing    with    respect   to~$\prec$,    that
is,~$\xi   \prec  \chi$   implies~$\Theta(\xi)  \leq   \Theta(\chi)$.  A
function~$\Theta$  is  called   \emph{Schur-concave}  if~$(-\Theta)$  is
Schur-convex.

\subsubsection{Shur convexity and concavity of the spectral radius in finite dimension}

We define the function~$\Theta_n$ on~$\R_+^{n}$ by:
\begin{equation*}
  \Theta_n(\xi) = \rho(M_n\cdot \Diag(\xi)),
\end{equation*}
where~$\Diag(\xi)$ is the diagonal~$n\times n$-matrix with~$\xi$ on the diagonal.
By construction, for~$\eta=(\eta_1, \ldots, \eta_n, 0, \ldots)$, we have:
\begin{equation}\label{eq:ch-variable}
  R_e(\eta) = \Theta_n(\eta_1 \mu_1, \ldots, \eta_n \mu_n).
\end{equation}
The key property below will allow us to identify the optimizers.

\begin{lemma}[Schur-concavity and Schur-convexity]\label{lem:q=schur-cave}
  Let~$b >0$ and~$a \geq 0$. The function~$\Theta_n$ is Schur-convex if~$a \geq b$, and
  Schur-concave if~$a \leq b$. 
\end{lemma}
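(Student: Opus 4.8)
The plan is to deduce the lemma from two structural facts about $\Theta_n$: that it is a symmetric function of its arguments, and that it inherits the convexity (resp.\ concavity) of $R_e$ already obtained in Proposition~\ref{prop:disass-ass-eig}. Once these are in place, the statement follows from the classical principle that a symmetric convex function is Schur-convex and a symmetric concave function is Schur-concave (see \cite{InequalitiesTMarsha2011}). So the whole argument reduces to checking symmetry and convexity/concavity of $\Theta_n$ on $\R_+^{n}$.

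First I would record the symmetry. Writing $M_n = (a-b)\,\mathrm{Id} + b\,J$ with $J$ the all-ones matrix, both $\mathrm{Id}$ and $J$ commute with every permutation matrix $P$, hence $M_n P = P M_n$. Since $\Diag(P\xi) = P\,\Diag(\xi)\,P^{\mathsf T}$, we obtain $M_n \Diag(P\xi) = P\,\bigl(M_n \Diag(\xi)\bigr)\,P^{\mathsf T}$, so $M_n\Diag(P\xi)$ is similar to $M_n\Diag(\xi)$ and $\Theta_n(P\xi) = \Theta_n(\xi)$. Thus $\Theta_n$ is symmetric.

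Next, the convexity/concavity. By \eqref{eq:ch-variable}, $\Theta_n$ is, up to the fixed positive diagonal rescaling $\xi_i = \eta_i\mu_i$, exactly the effective reproduction number of a finite model driven by $M_n$; equivalently $\Theta_n(\eta) = \rho(M_n\Diag(\eta)) = R_e(\eta)$ for the $n$-point model with uniform weights and kernel proportional to $M_n$. Since $M_n$ is positive semidefinite when $a \geq b$ and has $\rho(M_n)$ as its only positive eigenvalue when $a \leq b$ (as recorded just before the proof of Proposition~\ref{prop:disass-ass-eig}), that proposition gives that $\Theta_n$ is convex on the box $[0,1]^n$ when $a \geq b$ and concave when $a \leq b$. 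Because $\xi \mapsto \Theta_n(\xi)$ is positively homogeneous of degree one, convexity (resp.\ concavity) near the origin upgrades to the whole cone $\R_+^{n}$: for $\xi,\chi \in \R_+^{n}$, $t\in[0,1]$, apply the box inequality to $s\xi,s\chi$ for $s>0$ small (all convex combinations then remaining in $[0,1]^n$) and divide by $s$. Combining symmetry with convexity (resp.\ concavity) yields Schur-convexity (resp.\ Schur-concavity).

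The only genuinely delicate point is the passage from the box to the cone together with checking that the convexity result is applied to the right object: the change of variables $\eta \mapsto \xi$ must be an affine bijection (so convexity is preserved), and majorization must keep us inside $\R_+^{n}$ (it does, since $\xi \prec \chi$ forces $\xi$ into the convex hull of the permutations of $\chi$). As a fully self-contained alternative I would instead verify the Schur--Ostrowski criterion directly: on the interior $\xi \gg 0$ the matrix $M_n\Diag(\xi)$ is irreducible, so its Perron root $\lambda = \Theta_n(\xi)$ is simple and smooth, with Perron eigenvector $v_i = bS/\bigl(\lambda - (a-b)\xi_i\bigr)$ where $S = \sum_j \xi_j v_j$; the first-order eigenvalue perturbation formula then makes $\partial_{\xi_k}\Theta_n$ proportional to $v_k^2$ with a $k$-independent positive factor, so that $(\xi_i-\xi_j)\bigl(\partial_{\xi_i}\Theta_n - \partial_{\xi_j}\Theta_n\bigr)$ has the sign of $(\xi_i-\xi_j)(v_i^2-v_j^2)$. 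Since $v_i$ is increasing in $\xi_i$ when $a\geq b$ and decreasing when $a\leq b$, this sign is nonnegative (resp.\ nonpositive), giving Schur-convexity (resp.\ concavity) on the interior and then on $\R_+^{n}$ by continuity.
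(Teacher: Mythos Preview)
Your main argument---symmetry via permutation-invariance of $M_n$, convexity/concavity inherited from Proposition~\ref{prop:disass-ass-eig} through~\eqref{eq:ch-variable}, extension from the box to $\R_+^n$ by homogeneity, then the classical ``symmetric $+$ convex $\Rightarrow$ Schur-convex'' principle---is correct and matches the paper's proof essentially line for line. Your added Schur--Ostrowski alternative is a valid self-contained route the paper does not take: the explicit formula $v_i = bS/(\lambda-(a-b)\xi_i)$ and the perturbation identity $\partial_{\xi_k}\Theta_n = \lambda v_k^2/\sum_i \xi_i v_i^2$ check out, and the monotonicity of $i\mapsto v_i$ in $\xi_i$ gives the criterion directly without invoking the convexity of $R_e$.
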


\begin{proof}
  Let us consider the disassortative case where~$a \leq b$. By a classical result of
  majorization theory \cite[Proposition~I.3.C.2.]{InequalitiesTMarsha2011}, it is enough
  to show that~$\Theta_n$ is symmetric and concave. 

  To prove that~$\Theta_n$ is symmetric, consider~$\sigma$ be a permutation of~$\{1, 2,
  \ldots, n \}$ and~$P_\sigma$ the associated permutation matrix of size~$n \times
  n$. Since~$P_\sigma M_n P_\sigma^{-1} = M_n$, we deduce that~$\Theta_n(\xi_{\sigma})
  = \Theta_n(\xi)$, where~$\xi_\sigma$ is the~$\sigma$-permutation of~$\xi \in
  \R_+^{n}$. Thus~$\Theta_n$ is symmetric.

  We now prove that~$\Theta_n$ is concave on~$\R_+^{n}$. Since~$R_e$ is concave thanks
  to Proposition~\ref{prop:disass-ass-eig}~\ref{pt:disass-Re}, we deduce from \eqref{eq:ch-variable}, that the
  function~$\Theta_n$ is concave on~$[0, \mu_1] \times \ldots \times [0, \mu_n]$.
  Since~$\Theta_n$ is homogeneous, it is actually concave on the whole domain~$\R_+^n$.
  This concludes the proof when~$a \leq b$.

  The proof is the same for the assortative case~$a \geq b$, replacing the reference to
  Proposition~\ref{prop:disass-ass-eig}~\ref{pt:disass-Re}  by
  \ref{pt:ass-Re}. 
\end{proof}

\subsubsection{Extreme vaccinations for fixed cost}

Let us show that the horizontal and vertical vaccinations give extreme points for the
preorder~$\prec$ on finite sets, when the quantity of vaccine is fixed. Recall that
$\xi^\mathrm{h}$ and~$\xi^\mathrm{v}$ are defined in~\eqref{eq:xih} and~\eqref{eq:xiv}
respectively.

\begin{proposition}[Extreme vaccinations]\label{prop:extreme}
  Let~$n\in\Omega$,~$\beta\in[0,n)$ and~$\alpha\in[0, \mu_1]$. Let
  $\xi^{\mathrm{v},n} = (\xi^\mathrm{v}_1(\beta), \ldots,\xi^{\mathrm{v}}_n(\beta))$, and
  $\xi^{\mathrm{h},n} = (\xi^{\mathrm{h}}_1(\alpha), \ldots, \xi^{\mathrm{h}}_n(\alpha))$.
  For any~$\xi = (\xi_1,\ldots, \xi_n)\in [0,\mu_1]\times\cdots \times [0,\mu_n]$, we
  have:
  \begin{equation*}
    \left( \sum_{i=1}^n \xi_i = \sum_{i=1}^n \xi^{\mathrm{v},n}_i \right) \implies
    \xi \prec\xi^{\mathrm{v},n}, \quad \text{and} \quad
    \left( \sum_{i=1}^n \xi_i = \sum_{i=1}^n \xi^{\mathrm{h},n}_i \right) \implies
    \xi^{\mathrm{h},n} \prec\xi.
  \end{equation*}
\end{proposition}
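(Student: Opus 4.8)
The plan is to verify the two majorization relations through the two complementary characterizations of $\prec$ recalled above, selecting for each direction the one that makes the estimate transparent. The starting observation is that both reference profiles are \emph{already sorted in decreasing order}: since $\mu_1 \geq \mu_2 \geq \cdots$, the maps $i \mapsto \min(\alpha,\mu_i)$ from \eqref{eq:xih} and $i \mapsto \mu_i\min(1,(\beta+1-i)_+)$ from \eqref{eq:xiv} are non-increasing, so $\xi^{\mathrm{h},n}$ and $\xi^{\mathrm{v},n}$ coincide with their own order statistics and comparisons of partial sums against them are comparisons against the profiles themselves. Throughout, write $T=\sum_{i=1}^n\xi_i$ for the common total mass, and recall that any competitor obeys the box constraint $0\leq\xi_i\leq\mu_i$.

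For the vertical relation $\xi\prec\xi^{\mathrm{v},n}$ I would argue directly on partial sums. First I would check, by splitting according to whether $i$ lies before, at, or after $\lceil\beta\rceil$, that the greedy structure of \eqref{eq:xiv} yields the closed form $\sum_{j=1}^i\xi^\mathrm{v}_j(\beta)=\min\bigl(T,\sum_{j=1}^i\mu_j\bigr)$ for every $i$. On the other hand, for an arbitrary feasible $\xi$ the sum of its $i$ largest entries is bounded by $T$ (all entries are non-negative) and by $\sum_{j=1}^i\mu_j$ (each selected entry is at most the corresponding $\mu$, and the $i$ largest of the $\mu_j$ are exactly $\mu_1,\dots,\mu_i$); hence $\sum_{j=1}^i\xi_j^\downarrow\leq\min\bigl(T,\sum_{j=1}^i\mu_j\bigr)=\sum_{j=1}^i\xi^\mathrm{v}_j(\beta)$. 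Together with the assumed equality of totals, this is precisely $\xi\prec\xi^{\mathrm{v},n}$.

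For the horizontal relation $\xi^{\mathrm{h},n}\prec\xi$ the partial-sum route would instead demand a \emph{lower} bound on the top-$i$ sums of $\xi$, which is awkward; I would therefore use the Hardy--Littlewood--P\'olya characterization \eqref{eq:majorization_(x-a)+} and establish $\sum_i(\xi^\mathrm{h}_i-t)_+\leq\sum_i(\xi_i-t)_+$ for every $t\geq 0$, distinguishing two ranges. When $t\geq\alpha$ the left-hand side vanishes outright, since $\xi^\mathrm{h}_i=\min(\alpha,\mu_i)\leq\alpha\leq t$, and there is nothing to prove. When $0\leq t<\alpha$, I would pass to the dual inequality $\sum_i(t-\xi^\mathrm{h}_i)_+\leq\sum_i(t-\xi_i)_+$, which is equivalent to the original one via the identity $(u-t)_+=(u-t)+(t-u)_+$ together with the equality of totals. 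Here the indices with $\mu_i\geq\alpha$ contribute nothing on the left (as then $\xi^\mathrm{h}_i=\alpha>t$), so only the indices with $\mu_i<\alpha$ survive, where $\xi^\mathrm{h}_i=\mu_i$; for these the bound $\xi_i\leq\mu_i$ gives $(t-\xi_i)_+\geq(t-\mu_i)_+$ termwise, and discarding the remaining non-negative terms on the right closes the estimate.

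Each computation is short once the appropriate characterization is fixed, and this pairing is the only genuine decision: partial sums for the majorization-maximal vertical profile, and the convex test functions $(\,\cdot\,-t)_+$ for the majorization-minimal horizontal profile. The step I would flag as the crux is the horizontal one, where the naive termwise comparison fails for indices with $\xi_i>\alpha$; the device of passing to the dual functions $(t-\,\cdot\,)_+$, legitimate thanks to the equal totals, is exactly what renders the estimate termwise and elementary. The restriction to the first $n$ coordinates, with $\beta<n$ and $\alpha\leq\mu_1$, ensures that every index and profile value invoked above is well defined, so the finite-dimensional statement needs no further care; the passage to $N=\infty$ is handled separately by the approximation argument of the section.
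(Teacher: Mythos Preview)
Your proof is correct and follows essentially the same approach as the paper: partial sums for the vertical majorization and the Hardy--Littlewood--P\'olya characterization \eqref{eq:majorization_(x-a)+} with the dual passage via $(u-t)_+=(u-t)+(t-u)_+$ for the horizontal one. Your closed form $\sum_{j\leq i}\xi^{\mathrm v}_j(\beta)=\min\bigl(T,\sum_{j\leq i}\mu_j\bigr)$ is a clean way to package the paper's case split on $\ell\leq\lfloor\beta\rfloor$ versus $\ell>\lfloor\beta\rfloor$, and your termwise argument for the dual inequality matches the paper's chain $\sum_i(t-\xi^{\mathrm h,n}_i)_+=\sum_i(t-\mu_i)_+\leq\sum_i(t-\xi_i)_+$.
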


\begin{proof}
  Let~$\xi \in [0,\mu_1]\times\cdots \times [0,\mu_n]$ be such that~$\sum_{i=1}^n \xi_i =
  \sum_{i=1}^n \xi^{\mathrm{v},n}_i$. The reordered vector~$\xi^\downarrow$ clearly
  satisfies the same conditions, so without loss of generality we may assume that~$\xi$ is
  sorted in descending order. Using Equation~\eqref{eq:xiv}, we get:
  \[
    \sum_{i=1}^\ell \xi_i \leq \sum_{i=1}^\ell \mu_i = \sum_{i=1}^\ell \xi^{\mathrm{v},n}_i,
    \quad \text{for} \quad 1 \leq \ell \leq  \floor{\beta}.
  \]
  We also have:
  \[
    \sum_{i=1}^\ell \xi_i \leq \sum_{i=1}^n \xi_i = \sum_{i=1}^n
    \xi^{\mathrm{v},n}_i = \sum_{i=1}^\ell \xi^{\mathrm{v},n}_i, \quad
    \text{for} \quad \ell > \floor{\beta}. 
  \] 
  Therefore, we get~$\xi\prec \xi^{\mathrm{v},n}$, by the definition of~$\prec$.

  \medskip

  Similarly, let~$\xi \in [0,\mu_1]\times\cdots \times [0,\mu_n]$ be such
  that~$\sum_{i=1}^n \xi_i = \sum_{i=1}^n
  \xi^{\mathrm{h},n}_i$. If~$t\geq \alpha$ then:
  \[
    \sum_i( \xi^{\mathrm{h},n}_i - t)_+ = 0 \leq \sum_i (\xi_i-t)_+, \]
  while if~$t\in
  [0, \alpha)$, using the fact that~$\sum_{i=1}^n\xi_i = \sum
  _{i=1}^n\xi^{\mathrm{h},n}_i$, the expression~$\xi_i^{\mathrm{h},n} = \min(\alpha,\mu_i)$,
  and the inequalities~$\xi_i\leq \mu_i$, we get:
  \begin{align*}
    \sum_{i=1}^n (\xi^{\mathrm{h},n}_i - t)_+
    &= \sum_{i=1}^n(\xi^{\mathrm{h},n}_i - t) + \sum_{i=1}^n (t-\xi^{\mathrm{h},n}_i)_+ \\
    &= \sum_{i=1}^n(\xi_i - t) + \sum_{i=1}^n (t-\mu_i)_+ \\
    &\leq \sum_{i=1}^n(\xi_i -t) + \sum_{i=1}^n (t-\xi_i)_+ \\
    &= \sum_{i=1}^n (\xi_i - t)_+.
  \end{align*}
  This gives~$\xi^{\mathrm{h},n} \prec \xi$, by the
  characterization~\eqref{eq:majorization_(x-a)+}.
\end{proof}

\subsubsection{``Vertical'' Pareto optima in the disassortative case}

We   consider   here   the    disassortative   model~$b\geq   a\geq   0$
and~$b>0$.                Let~$c               \in                (0,1)$
and~$D(c)= \{ \eta \in \Delta \, \colon \, C(\eta) = c \}$ be the set of
vaccination  strategies with  cost $c$.  We will  solve the  constrained
optimization Problem~\eqref{eq:prob-min-Re} that corresponds to:
\begin{equation}\label{eq:prob-min-conc}
  \left\{
    \begin{array}{cc}
      \min \, & R_e(\eta), \\
      \text{such that} & \eta \in D(c).
    \end{array}
  \right.
\end{equation}
Recall the definitions of~$\beta_c$ and~$\ver(c)$ given page~\pageref{eq:ver}.
Let~$\eta\in D(c)$. Let~$n$ be large enough so
that~$\sum_{j>n} \mu_j< 1-c$ so that~$\sum_{j\leq n} \eta_j \mu_j>0$, and assume
that~$n>\beta$.
Let~$\eta^{(n)}\in \Delta$ be defined by:
\[
  \eta^{(n)}_i = \frac{\sum_{j\leq n} \ver_j(c) \mu_j}{\sum_{j\leq n} \eta_j \mu_j}
  \ind{\{i\leq n\}} \, \eta_i.
\]
Note that since~$C(\ver(c)) = c = C(\eta)$, we have~$\lim_{n \to N} \eta^{(n)} = \eta$
(pointwise and in~$L^2$). Let~$\xi^n = (\eta_1^{(n)} \mu_1, \ldots, \eta_n^{(n)} \mu_n)$
and~$\xi^{\mathrm{v}, n}$ be defined as in Proposition~\ref{prop:extreme} with~$\beta =
\beta_c$. By construction, we have~$\sum_{i=1}^n \xi_i^n = \sum_{i=1}^n
\xi_i^{\mathrm{v},n}$, so by Proposition~\ref{prop:extreme}, we get~$\xi^n \prec
\xi^{\mathrm{v},n}$.
This implies that:
\[
  R_e(\eta^{(n)} ) = \Theta_n(\xi^n)
  \geq \Theta_n(\xi^{\mathrm{v},n}) = R_e(\ver(c)),
\]
where the inequality follows from the Schur concavity of~$\Theta_n$ in the disassortative
case (see Lemma~\ref{lem:q=schur-cave}) and where the last equality holds as~$n \geq
\ceil{\beta_c}$. Since~$R_e$ is continuous and $\eta^{(n)}$ 
converges pointwise and in~$L^2$ to~$\eta$, we
get~$R_e(\eta)\geq R_e(\ver)$. This implies that~$\ver$ is a solution of
Problem~\eqref{eq:prob-min-conc}.

\medskip

If~$a > 0$, then~$\kk$ is positive everywhere, and we deduce from Lemma~\ref{lem:k>0-c}
that~$\cmir = 1$. If~$a=0$, it is easy to prove that~$\{0\}$ is a maximal independant set
of~$\kk$; this gives that $\cmir=1-\mu_1$, thanks to \cite[Section~6.4]{ddz-Re}. Since for
all $c\in [0,\cmir)$ there exists~$\eta \in \verPath$ such that $C(\eta)=c$, we also get
that $\verPath \cap \{ \eta \in \Delta \, \colon \, C(\eta) \leq \cmir \}$ is a
parametrization of the Pareto frontier. This gives the parametrization of the Pareto
frontier using~$\verPath$ from Theorem~\ref{th:dis-assortative}~\ref{pt:disass}
and~\ref{pt:multi}.

\subsubsection{``Horizontal'' anti-Pareto optima in the disassortative case}

We still consider~$b \geq a \geq 0$ and~$b>0$. Let~$c \in (0,1)$. We now turn to the
anti-Pareto frontier by studying the constrained maximization
Problem~\eqref{eq:prob-max-Re} that corresponds to:
\begin{equation}\label{eq:prob-max-conc} 
  \left\{
    \begin{array}{cc}
      \max \, & R_e(\eta), \\
      \text{such that} & \eta \in D(c).
    \end{array}
  \right.
\end{equation}
Recall the definitions of~$\alpha_c$ and~$\ho(c)$ given
page~\pageref{eq:ho}. Let~$\eta\in D(c)$. Let~$n$ be large enough so that~$\sum_{j>n}
\mu_j< 1-c$ and thus~$\sum_{j\leq n} \eta_j
\mu_j>0$. Define~$\eta^{(n)}\in \Delta$ by:
\[
  \eta^{(n)}_i = \frac{\sum_{j\leq n} \ho_j(c) \mu_j}{\sum_{j\leq n} \eta_j \mu_j}
  \ind{\{i\leq n\}} \, \eta_i.
\]
Let~$\xi^n = (\eta_1^{(n)} \mu_1, \ldots, \eta_n^{(n)} \mu_n)$ and
let~$\xi^{\mathrm{h}, n}$ be defined as in Proposition~\ref{prop:extreme} with~$\alpha =
\alpha_c$. By construction, we have~$\sum_{i=1}^n \xi_i^n = \sum_{i=1}^n
\xi_i^{\mathrm{h},n}$, so by Proposition~\ref{prop:extreme}, we obtain~$\xi^{\mathrm{h},n}
\prec \xi^n$. This implies that:
\[
  R_e(\eta^{(n)} ) = \Theta_n(\xi^n)
  \leq \Theta_n(\xi^{\mathrm{h},n}) = R_e(\ho(c) \, \ind{\lb 1, n \rb}),
\]
where the inequality follows from the Schur concavity of~$\Theta_n$. \medskip

Now, as~$n$ goes to infinity~$\eta^{(n)}$ converges pointwise and in $L^2$ to~$\eta$,
and~$\ho(c) \, \ind{\lb 1, n \rb}$ converges pointwise and in~$L^2$ to~$\ho(c)$, so by
continuity of~$R_e$ we get $R_e(\eta) \leq R_e(\ho(c))$, and~$\ho(c)$ is solution of the
Problem~\eqref{eq:prob-max-conc} and is thus anti-Pareto optimal for $c\in (0, 1)$
as~$\cmar=0$. Since~$\cmar=0$, we also deduce from \cite[Propsotion~5.8~(iii)]{ddz-theo}
that~$\zero$ and~$\un$ are anti-Pareto optimal. Since for all~$c\in [0, 1]$ there
exists~$\eta\in\hoPath$ such that~$C(\eta)=c$, we deduce that~$\hoPath$ is a
parametrization of the anti-Pareto frontier.

\subsubsection{The assortative case}

The case~$a \geq b>0$, corresponding to point~\ref{pt:ass} in
Proposition~\ref{th:dis-assortative}, is handled similarly, replacing concavity by
convexity, minima by maxima and vice versa.

\section{\Regular{} kernels and unifom vaccinations}\label{sec:constant_degree}

\subsection{Motivation}

We have seen in the previous section an example of model where vaccinating individuals
with the highest degree is the best strategy.  A similar phenomenon is studied
in~\cite{ddz-mono}, where under monotonicity arguments on the kernel, vaccinating
individuals with the highest (resp.\ lowest) degree is Pareto (resp.\ anti-Pareto) optimal.
However, in case multiple individuals share the same maximal degree, the optimal
strategies differ completely between the assortative and the disassortative models: the
Pareto optimal strategies for one model correspond to the anti-Pareto optimal strategies
for the other and vice versa.

Motivated by this curious symmetry, we investigate in this section
\regular{} kernels, that is, the situation where all the individuals
have the same number of connections. In Section~\ref{sec:unif-reg}, we
define these kernels formally and give the main result on the
optimality of the uniform strategies when $R_e$ is either convex or
concave, see Proposition~\ref{prop:regular}.
Section~\ref{sec:proof-regular} is devoted to the proof of this main
result. We study in more detail the optimal strategies in an example
of \regular{} symmetric kernels of rank two in
Section~\ref{sec:rank-2-reg}. Eventually, we study in Section
\ref{sec:geometric} geometric kernels on the sphere, which are
\regular{} kernels.

\subsection{On the uniform strategies for \regular{} kernels}\label{sec:unif-reg}

In graph theory, a regular graph is a graph where all vertices have the same number of
in-neighbors, and the same number of out-neighbors. In other words all vertices have the
same in-degree and the same out-degree.  Limits of undirected regular graphs have been
studied in details by Backhausz and Szegedy \cite{backhausz_action_2020} and
Kunszenti-Kovács, Lovász and Szegedy \cite{kunsz}. When the graphs are dense, their limit
can be represented as a regular graphon, that is a symmetric kernel with a constant degree
function.

Since we do not wish to assume symmetry, we give the following general
definition. 
For a kernel~$\kk$ on~$\Omega$, we set, for all~$z\in \Omega$ and~$A\in \cf$:
\[
  \kk(z, A)=\int_A \kk(z,y)\, \mu(\mathrm{d} y)
  \quad\text{and}\quad
  \kk(A,z)=\int_A \kk(x,z)\, \mu(\mathrm{d} x).
\]
For~$z\in \Omega$, its in-degree is~$\kk(z, \Omega)$ and its out-degree is~$\kk(\Omega,
z)$. 

\begin{definition}[\Regular{} kernel]\label{def:regular}
  A kernel~$\kk$ with a finite~$L^2$ double-norm and a positive spectral radius~$R_0>0$ is
  called \emph{\regular}  if all the in-degrees and all the out-degrees have the same
  value, that is, the maps~$x\mapsto \kk(x, \Omega)$ and~$y\mapsto \kk(\Omega, y)$ defined
  on~$\Omega$ are constant, and thus equal.
\end{definition}

\begin{remark}
  Let~$\kk$ be a \regular{} kernel with spectral radius~$R_0 > 0$. Notice the condition
  ``all the in-degrees and out-degrees have the same value'' is also equivalent to~$\un$
  being a left and right eigenfunction of~$T_\kk$. We now check that the corresponding
  eigenvalue is~$R_0$. 

  Let~$h \in L^2_+(\Omega)
  \backslash \{ \zero\}$ be a left Perron-eigenfunction. Denote by~$\lambda$ the
  eigenvalue associated to~$\un$. Then, we have:
  \begin{equation*}
    \lambda \int_\Omega h(x) \, \mu(\mathrm{d}x) = \int_\Omega h(x) \kk(x,y)
    \mu(\mathrm{d}x) \mu(\mathrm{d}y) = R_0 \int_\Omega h(y) \, \mu(\mathrm{d}y),
  \end{equation*}
  where the first equality follows from the regularity of~$\kk$ and from the fact that~$h$
  is a left Perron-eigenfunction of~$T_\kk$. Since~$h$ is non-negative and not equal
  to~$\zero$ almost everywhere, we get that~$\lambda = R_0$ and~$\un$ is a right
  Perron-eigenvector of~$T_\kk$. With a similar proof, we show that~$\un$ is a left
  Perron-eigenvector of~$T_\kk$. In particular, if~$\kk$ is \regular{}, then the
  reproduction number is given by:
  \begin{equation}\label{eq:r0-regular}
    R_0 = \int_{\Omega \times \Omega} \kk(x,y) \, \mu(\mathrm{d}x) \mu(\mathrm{d}y).
  \end{equation}
\end{remark}

\begin{example}\label{ex:regular}
  We now give examples of \regular{} kernels.
  \begin{enumerate}[(i)]
    \item Let~$G=(V, E)$ be a finite non-oriented simple graph, and~$\mu$ the uniform
      probability measure on the vertices~$V$. The degree of a vertex~$x\in V$ is given by
      \[
	\deg(x)=\sharp \{y\in V\, \colon\, (x,y)\in E\}.
      \]
      The graph~$G$ is \regular{} if all its vertices have the same degree, say~$d\geq 1$.
      Then the kernel defined on the finite space~$\Omega=V$ by the adjacency matrix is
      \regular{} with~$R_0=d$. Notice it is also symmetric.
    \item\label{ex:regular-asym-circle} Let~$G=(V, E)$ be a finite directed graph,
      and~$\mu$ be the uniform probability measure on the vertices~$V$. The in-degree of a
      vertex~$x \in V$ is given by
      \[
	\deg_{\rm{in}}(x) = \sharp \{ y \in V \, \colon \, (y,x)\in E\},
      \]
      and the out-degree is given by
      \[
	\deg_{\rm{out}}(x)= \sharp \{y\in V\, \colon\, (x,y)\in E\}.
      \]
      The graph~$G$ is regular if all its vertices have the same in-degree and out-degree,
      say~$d\geq 1$. Then the kernel defined on the finite space~$\Omega=V$ by the
      adjacency matrix is regular with~$R_0=d$. Notice it might not be symmetric.
    \item\label{ex:circle}  Let~$\Omega  =  (\R  /  (2  \pi  \Z))^m$  be
      the~$m$-dimensional       torus       endowed       with       its
      Borel~$\sigma$-field~$\mathscr{F}$  and  the  normalized  Lebesgue
      measure~$\mu$.   Let~$f$   be   a   measurable   square-integrable
      non-negative  function   defined  on~$\Omega$.  We   consider  the
      geometric kernel on $\Omega$ defined by:
      \[
	\kk_f(x,y)=f(x-y).
      \]
      The kernel $\kk_f$ has a finite double norm as $f\in L^2$. 
      The operator~$T_{\kk_f}$ corresponds to the convolution by~$f$, and its spectral
      radius is given by~$R_0 = \int_\Omega f \, \rd\mu$. Then the kernel~$\kk_f$ is
      \regular{} as soon as~$f$ is not equal to~$0$ almost surely. This example is
      developed in Section~\ref{sec:geometric} in the case~$m=1$
      (corresponding to $d=2$ therein), see in particular
      Examples~\ref{ex:sphere=circle} and~\ref{ex:square}. 
    \item More generally, let~$(\Omega, \cdot)$ be a compact topological group and
      let~$\mu$ be its left Haar probability measure. Let~$f$ be non-negative
      square-integrable function on~$\Omega$. Then the kernel~$\kk_f(x,y) = f(y^{-1} \cdot
      x)$ is \regular.
  \end{enumerate}
\end{example} 

We summarize our main result in the next proposition, whose proof is given in
Section~\ref{sec:proof-regular}. We recall that a strategy is called uniform if it is
constant over~$\Omega$. 

\begin{proposition}[Uniform strategies for \regular{} kernels]\label{prop:regular}
  Let~$\kk$ be a \regular{} kernel on~$\Omega$.
  \begin{enumerate}[(i)]
    \item\label{prop:reg-cvxe} If the map~$R_e$ defined on~$\Delta$ is convex, then all
      uniform strategies are Pareto optimal (\emph{i.e.}~$\cpu\subset \cp$).
      Consequently,~$\cmir = 1$, the Pareto frontier is the segment joining~$(0, R_0)~$
      to~$(1,0)$, and for all~$c \in [0,1]$: \[ \mir(c) = (1 - c) R_0. \]
    \item\label{prop:reg-cave} If  the map~$R_e$ defined  on~$\Delta$ is
      concave,  then  the  kernel  $\kk$  is  irreducible and all  uniform
      strategies            are           anti-Pareto            optimal
      (\emph{i.e.}~$\cpu\subset \cpa$).  Consequently,~$\cmar =  0$, the
      anti-Pareto   frontier   is   the  segment   joining~$(0,   R_0)~$
      to~$(1,0)$, and for all~$c \in [0,1]$:
      \[
	\mar(c) = (1-c) R_0.
      \]
  \end{enumerate}
\end{proposition}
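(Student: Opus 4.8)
The plan is to exploit the fact, established in the preceding remark, that for a \regular{} kernel the constant function~$\un$ is simultaneously a left and right Perron-eigenfunction of~$T_\kk$ with eigenvalue~$R_0$. Combined with Example~\ref{ex:uniform} (homogeneity of the spectral radius), this immediately gives the outcome of every uniform strategy: for~$\etau = (1-c)\un$ we have~$C(\etau) = c$ and~$R_e(\etau) = \rho(T_{\kk(1-c)}) = (1-c)R_0$. So the whole content of the proposition is that, under the convexity (resp.\ concavity) hypothesis on~$R_e$, these uniform outcomes lie exactly on the lower (resp.\ upper) frontier, i.e.\ that the affine map~$c\mapsto (1-c)R_0$ coincides with~$\mir$ (resp.~$\mar$).

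First I would treat the convex case~\ref{prop:reg-cvxe}. The key observation is that the cost function~$C$ is affine and that~$\mir(c)$ is the minimum of~$R_e$ over the affine slice~$\{C(\eta)=c\}$. I would argue that the function~$c\mapsto \mir(c)$ is itself convex: given costs~$c_0,c_1$ with optimizers~$\eta^0,\eta^1$, the strategy~$t\eta^1+(1-t)\eta^0$ has cost~$tc_1+(1-t)c_0$ (affinity of~$C$), and convexity of~$R_e$ bounds~$R_e$ at this point by~$t\,\mir(c_1)+(1-t)\,\mir(c_0)$, whence~$\mir$ is convex. Now~$\mir(0)=R_0$ and~$\mir(1)=0$, so convexity forces~$\mir(c)\leq (1-c)R_0$ for all~$c\in[0,1]$. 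On the other hand the uniform strategy at cost~$c$ is admissible and achieves value exactly~$(1-c)R_0$, so~$\mir(c)\geq$ would go the wrong way --- instead the uniform strategy shows~$\mir(c)\leq (1-c)R_0$ trivially, and I need the reverse. The reverse inequality~$\mir(c)\geq(1-c)R_0$ is what genuinely uses convexity: since~$\mir$ is a convex function on~$[0,1]$ with~$\mir(0)=R_0$, $\mir(1)=0$, and lies below the chord, I instead read it the other way --- a convex function lies \emph{below} its chords, giving~$\mir(c)\le (1-c)R_0$, matched by the uniform strategy, hence equality; and then~$\cpu\subset\cp$, $\cmir=1$ follow. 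I expect this dovetailing of the two inequalities to require care, and I would lean on the general result~\cite[Proposition~6.6]{ddz-theo} invoked in the examples, which already ties convexity of~$R_e$ with an affine cost to the shape of the Pareto frontier; the cleanest route is to cite it directly rather than re-derive the frontier by hand.

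For the concave case~\ref{prop:reg-cave} I would run the symmetric argument: concavity of~$R_e$ together with affinity of~$C$ makes~$c\mapsto\mar(c)$ concave, so it lies above its chords, and the chord joining~$(0,R_0)$ to~$(1,0)$ is precisely~$c\mapsto(1-c)R_0$; the uniform strategies realize this chord, forcing~$\mar(c)=(1-c)R_0$ and~$\cpu\subset\cpa$. The one extra assertion here is irreducibility of~$\kk$, which is needed to guarantee~$\cmar=0$ (recall~$\cmar=0$ iff the frontier starts at~$R_0$ with no flat initial segment). I would deduce irreducibility from concavity: if~$\kk$ were reducible, $R_e$ would exhibit a strict initial plateau~$\mar(c)=R_0$ on some~$[0,\cmar]$ with~$\cmar>0$, but a concave~$\mar$ with~$\mar(0)=R_0$ that is flat on an initial interval and then must reach~$0$ at~$c=1$ cannot stay concave unless it is identically~$R_0$, contradicting~$\mar(1)=0$; hence~$\cmar=0$ and, a posteriori, $\kk$ is irreducible. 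The main obstacle I anticipate is precisely this irreducibility/plateau point: the convexity argument proper is a short chord comparison, but ruling out a degenerate flat piece and matching it with the definition of~$\cmar$ requires invoking~\cite[Lemma~5.13]{ddz-theo} and the frontier-shape results of~\cite{ddz-theo}, so I would structure the proof around citing those and reserve the elementary chord computation for the core equality.
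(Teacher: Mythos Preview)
Your chord argument has a genuine gap that you partially notice but do not resolve. Both the admissibility of the uniform strategy and the convexity of~$\mir$ yield the \emph{same} inequality~$\mir(c)\le(1-c)R_0$; neither gives the reverse. Saying the two are ``matched, hence equality'' is circular. Citing~\cite[Proposition~6.6]{ddz-theo} does not help either: that result only says the Pareto frontier is the graph of a convex function, not that it is the specific line through~$(0,R_0)$ and~$(1,0)$. In fact your argument so far uses the \regular{} hypothesis only to compute~$R_e(\etau)=(1-c)R_0$, which holds for \emph{any} kernel by homogeneity of the spectral radius; so nothing in the proposal distinguishes \regular{} kernels, and the conclusion is certainly false in general. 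The missing idea, which the paper supplies as Lemma~\ref{lem:uni-crit}, is that when~$\kk$ is \regular{} (and irreducible) the uniform strategy~$\etau$ is a \emph{critical point} of~$R_e$ on the affine slice~$\{C=c\}$: a first-order perturbation expansion of the simple eigenvalue~$R_e$ shows~$R_e((1-\varepsilon)\etau+\varepsilon\eta)=R_e(\etau)+O(\varepsilon^2)$ whenever~$C(\eta)=C(\etau)$, because the first-order term is~$\varepsilon R_0\int(\eta-\etau)\,\mathrm{d}\mu=0$. Combined with convexity of~$R_e$, criticality forces~$\etau$ to be a global minimum on the slice, which is exactly~$\mir(c)\ge(1-c)R_0$. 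The reducible case is then handled by the complete-reduction Lemma~\ref{lem:reduction}.

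Your irreducibility argument in~\ref{prop:reg-cave} is also incorrect: a concave function on~$[0,1]$ with~$f(0)=R_0$ and~$f(1)=0$ can perfectly well be constant on an initial interval (e.g.\ $f=R_0$ on~$[0,\tfrac12]$ and linear to~$0$ afterwards), so concavity of~$\mar$ alone does not rule out~$\cmar>0$. The paper instead combines the complete reducibility of \regular{} kernels (Lemma~\ref{lem:reduction}) with the fact, from~\cite[Lemma~7.3]{ddz-Re}, that concavity of~$R_e$ forces the kernel to be monatomic; together these give irreducibility.
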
 

In \cite[Section~5.2]{ddz-Re}, we give sufficient condition on the spectrum of~$T_\kk$ to
be either concave or convex. Combining this result with Proposition~\ref{prop:regular}, we
get the following corollary.

\begin{corollary}\label{cor:regular}
  Let~$\kk$ be a \regular{} symmetric kernel.
  \begin{enumerate}[(i)]
    \item\label{cor:regular-conv} If the eigenvalues of~$T_\kk$ are non-negative, then the
      uniform vaccination strategies are Pareto optimal and~$\cmir=1$
      (\emph{i.e.}~$\cpu\subset \cp$). 
    \item\label{cor:regular-conc} If $R_0$ is a simple  eigenvalue
      of~$T_\kk$ and the others eigenvalues are non-positive, then the kernel $k$ is
      irreducible,  the uniform vaccination strategies are
      anti-Pareto optimal and~$\cmar=0$ (\emph{i.e.}~$\cpu\subset \cpa$). 
  \end{enumerate}
\end{corollary}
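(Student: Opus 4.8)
The plan is to deduce this corollary directly from Proposition~\ref{prop:regular}, by first translating the two spectral hypotheses on~$T_\kk$ into convexity, respectively concavity, of the map~$R_e$, and then invoking the matching part of Proposition~\ref{prop:regular}. The bridge between the spectrum of~$T_\kk$ and the shape of~$R_e$ is exactly the criterion recalled from \cite[Theorem~5.5]{ddz-Re} (the very same tool already used in the proof of Proposition~\ref{prop:disass-ass-eig}); it applies here because~$\kk$ is assumed symmetric, so that~$T_\kk$ is self-adjoint and its spectrum is real.

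First I would treat part~\ref{cor:regular-conv}. Since~$\kk$ is symmetric, the assumption that all eigenvalues of~$T_\kk$ are non-negative is equivalent to~$T_\kk$ being positive semi-definite. By \cite[Theorem~5.5]{ddz-Re}, this makes the function~$R_e$ convex on~$\Delta$, and Proposition~\ref{prop:regular}~\ref{prop:reg-cvxe} then yields at once that the uniform strategies are Pareto optimal and that~$\cmir = 1$.

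Next I would treat part~\ref{cor:regular-conc}. The hypothesis that~$R_0$ is a simple eigenvalue while all the other eigenvalues are non-positive is precisely the spectral condition under which \cite[Theorem~5.5]{ddz-Re} guarantees that~$R_e$ is concave. Plugging this into Proposition~\ref{prop:regular}~\ref{prop:reg-cave} gives simultaneously that~$\kk$ is irreducible, that the uniform strategies are anti-Pareto optimal, and that~$\cmar = 0$; note that the irreducibility conclusion is already part of the statement of Proposition~\ref{prop:regular}~\ref{prop:reg-cave}, so nothing extra is needed.

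Because both parts reduce to a single application of the cited spectral criterion followed by Proposition~\ref{prop:regular}, I do not expect any genuine obstacle. The only point requiring care is to verify that the two spectral hypotheses match verbatim the convexity and concavity conditions of \cite[Theorem~5.5]{ddz-Re}: specifically, that ``non-negative eigenvalues'' coincides with positive semi-definiteness for the self-adjoint operator~$T_\kk$, and that ``$R_0$ simple, all others non-positive'' is exactly the concavity criterion stated there. Once this identification is confirmed, the proof is complete without further computation.
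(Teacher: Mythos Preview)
Your proposal is correct and follows exactly the approach indicated in the paper: the corollary is stated immediately after the sentence ``Combining this result with Proposition~\ref{prop:regular}, we get the following corollary,'' where ``this result'' refers to the spectral criterion of \cite[Section~5.2]{ddz-Re} (i.e., Theorem~5.5 there). There is nothing to add beyond what you wrote.
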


\begin{remark}[Equivalent conditions]\label{rem:semi-definite}
  Let~$\kk$ be a \regular{} symmetric kernel. The eigenvalues of the operator~$T_\kk$ are
  non-negative if and only if~$T_\kk$ is semi-definite positive, that is:
  \begin{equation}
    \int_{\Omega \times \Omega} \kk(x,y) g(x) g(y) \mu(\mathrm{d}x) \mu(\mathrm{d}y) \geq
    0 \quad \text{for all}\quad g \in L^2.
  \end{equation}
  Similarly, the condition given in Corollary~\ref{cor:regular}~\ref{cor:regular-conc}
  that implies the concavity of~$R_e$ is equivalent to the semi-definite negativity
  of~$T_\kk$ on the orthogonal of~$\un$:
  \begin{equation}
    \int_{\Omega \times \Omega} \kk(x,y) g(x) g(y) \mu(\mathrm{d}x) \mu(\mathrm{d}y) \leq
    0 \quad \text{for all}\quad g \in L^2 \quad\text{such that}\quad
    \int_\Omega g\, \mathrm{d} \mu = 0. 
  \end{equation}
\end{remark}

\begin{remark}[Comparison with a result from \cite{poghotanyan_constrained_2018}]
  Poghotanyan, Feng, Glasser and Hill \cite[Theorem~4.7]{poghotanyan_constrained_2018}
  obtained a similar result in finite dimension using a result from Friedland
  \cite{Fried80}: if the next-generation non-negative matrix~$K$ of size~$N \times N$
  satisfies the following conditions
  \begin{enumerate}[(i)]
    \item\label{item:po1} $\sum_{j=1}^{N} K_{ij}$ does not depend on~$i\in \lb 1, N
     \rb$ (which corresponds the parameters~$a_i$ in
      \cite[Equation~(2.4)]{poghotanyan_constrained_2018} being all equal),
    \item \label{item:po2} $\mu_i K_{ij} = \mu_j K_{ji}$ for all~$i,j \in \lb 1, N \rb$
      where~$\mu_i$ denote the relative size of population~$i$ (which corresponds to
      \cite[Equation~(2.4)]{poghotanyan_constrained_2018}),
    \item \label{item:po3} $K$ is not singular and its inverse is an M-matrix
      (\textit{i.e.}, its non-diagonal coefficients are non-positive),
  \end{enumerate}
  then the uniform strategies are Pareto optimal (\emph{i.e.}, they minimize the
  reproduction number among all strategies with same cost). Actually, this can be seen as
  a direct consequence of Corollary~\ref{cor:regular}~\ref{cor:regular-conv}. Indeed, the
  corresponding kernel~$\kk_\mathrm{d}$ defined by~\eqref{eq:next-kernel} in the discrete
  probability space~$\Omega= \lb 1, N \rb$ endowed with the discrete probability
  measure~$\mu_\mathrm{d}$ also defined by~\eqref{eq:next-kernel} has constant degree
  thanks to Point~\ref{item:po1} and is symmetric thanks to Point~\ref{item:po2}.
  Since~$K^{-1}$ is an M-matrix, its real eigenvalues are positive according to
  \cite[Chapter~6 Theorem~2.3]{NonnegativeMatBerman1994}. The eigenvalues of
  ~$T_{\kk_\mathrm{d}}$ and $K$ are actually the same as~$K$ is the representation matrix
  of~$T_{\kk_\mathrm{d}}$ in the canonic basis of~$\R^N$. We conclude that the
  operator~$T_{\kk_\mathrm{d}}$ is positive definite. Hence
  Corollary~\ref{cor:regular}~\ref{cor:regular-conv} can be applied to recover that the
  uniform strategies are Pareto optimal. \medskip
  
  However, the converse is not true. As a counter-example, consider a population divided
  in~$N=3$ groups of same size (\textit{i.e},~$\mu_1 = \mu_2 = \mu_3 = 1/3$) and the
  following next-generation matrix:
  \begin{equation*}
    K = 
    \begin{pmatrix}
      3 & 2 & 0 \\
      2 & 2 & 1 \\
      0 & 1 & 4
    \end{pmatrix}
    \qquad \text{with inverse} \quad
    K^{-1} = 
    \begin{pmatrix}
      1.4 & -1.6 & 0.4 \\
      -1.6 & 2.4 & -0.6 \\
      0.4 & - 0.6 & 0.4 
    \end{pmatrix}.
  \end{equation*}
  Clearly Points~\ref{item:po1} and~\ref{item:po2} hold and Point~\ref{item:po3} fails
  as~$K^{-1}$ is not an M-matrix. Nevertheless, the matrix~$K$ is definite positive as its
  eigenvalues~$\sigma(K) = \{ 5, 2 + \sqrt{3}, 2 - \sqrt{3} \}$ are positive. And thus,
  thanks to Corollary~\ref{cor:regular}~\ref{cor:regular-conv}, we get that the uniform
  strategies are Pareto optimal. Hence, Corollary~\ref{cor:regular}~\ref{cor:regular-conv}
  is a strict generalization of \cite[Theorem~4.7]{poghotanyan_constrained_2018} even for
  finite metapopulation models.
\end{remark}

\begin{remark}
  We also refer the reader to the paper of Friedland and Karlin \cite{FK75}: from the
  Inequality~(7.10) therein, we can obtain
  Corollary~\ref{cor:regular}~\ref{cor:regular-conv} when~$\Omega$ is a compact set
  of~$\R^n$,~$\mu$ is a finite measure,~$\kk$ is a continuous symmetrizable kernel such
  that~$\kk(x, x)>0$ for all~$x\in \Omega$.

  Further comments on related results may be found in the discussion of~\cite[Theorem
  5.1]{ddz-Re}. 
\end{remark}

Below, we give examples of metapopulation models from the previous sections where
Proposition~\ref{prop:regular} applies. For continuous models, we refer the reader to
Sections~\ref{sec:rank-2-reg} and~\ref{sec:geometric}.

\begin{example}[Fully asymmetric cycle model]\label{ex:asym-cyclic}
  We consider the fully asymmetric circle model with~$N \geq 3$ vertices developed in
  Section~\ref{sec:asym-circle}. Since the in and out degree of each vertex is exactly
  one, the adjacency matrix is \regular{} according to
  Example~\ref{ex:regular}~\ref{ex:regular-asym-circle}.
  
  The spectrum of the adjacency matrix is given by the~$N$th roots of unity, so  for~$N
  \geq 3$ it does   not lie in~$\R_-\cup \{R_0\}$, so Corollary~\ref{cor:regular} does not
  apply.  However, in this case the effective spectral radius~$R_e$ is given by formula
  \eqref{eq:geom-mean}, which corresponds to the geometric mean. According to
  \cite[Section~3.1.5]{bv2009}, the map~$\eta \mapsto R_e(\eta)$ is concave, so
  Proposition~\ref{prop:regular}~\ref{prop:reg-cave} applies. This proves that the
  spectral condition given in Corollary~\ref{cor:regular} and \cite[Section~5.2]{ddz-Re}
  to get the concavity of $R_e$ is only sufficient.
\end{example}

\begin{example}[Finite assortative and disassortative model]\label{ex:cyclic-group}
  Let~$\Omega = \{ 1,2, \ldots, N \}$ and~$\mu$ be the uniform probability on~$\Omega$.
  Let~$a, b\in \R_+$. We consider the kernel from the models developed in
  Section~\ref{sec:ass-disass}: 
  \[
    \kk(i,j) = a \ind{i=j} + b \ind{i \neq j}. 
  \]
  Since~$\mu$ is uniform, the kernel~$\kk$ is \regular; provided its spectral radius is
  positive, \textit{i.e.},~$a$ or~$b$ is positive.

  In the assortative model~$0<b \leq a$, according to
  Proposition~\ref{prop:disass-ass-eig}~\ref{pt:ass-Re}, the eigenvalues of the symmetric
  operator~$T_\kk$ are non-negative. Hence,
  Corollary~\ref{cor:regular}~\ref{cor:regular-conv} applies: the
  uniform strategies are Pareto optimal. This is consistent with
  Theorem~\ref{th:dis-assortative}~\ref{pt:ass}.

  In the  dissortative model,  we have~$0  \leq a \leq  b$ and~$b  > 0$.
  According to Proposition~\ref{prop:disass-ass-eig}~\ref{pt:disass-Re},
  the  eigenvalues of~$T_\kk$  different  from its  spectral radius  are
  non-positive.                                                   Hence,
  Corollary~\ref{cor:regular}~\ref{cor:regular-conc}     applies:    the
  uniform  strategies   are  anti-Pareto.    This  is   consistent  with
  Theorem~\ref{th:dis-assortative}~\ref{pt:disass} and~\ref{pt:multi}.
\end{example}

\subsection{Proof of Proposition \ref{prop:regular}}\label{sec:proof-regular}

By analogy with \cite{LineSumSymmetEaves1985}, we consider the following definition.

\begin{definition}[Completely reducible kernels]
  \label{def:comp-irr}
  A kernel $\kk$  is said to be \emph{completely reducible}  if there
  exist an at most countable index set $I$, and 
  measurable sets $\Omega_0$ and $(\Omega_i, i\in I)$, such
  that~$\Omega$ is the disjoint union $\Omega = \Omega_0
  \sqcup(\bigsqcup_{i\in I} \Omega_i)$, the kernel $\kk$ decomposes
  as $\kk=\sum_{i\in   I} \ind{\Omega_i}
  \kk  \ind{\Omega_i}$ a.e., and, for all $i\in I$,  the  kernel $\kk$
  restricted to~$\Omega_i$ is irreducible. 
\end{definition}

  As in the discrete case for so-called line sum symmetric matrices,
  see~\cite[Lemma 1]{LineSumSymmetEaves1985},
  kernels for which for any $x$ the out-degree is equal to the
  in-degree
  are necessarily completely reducible; the fact that these degrees
  do not depend on $x$ impose further constraints.

\begin{lemma}[Complete reduction]\label{lem:reduction}
  If~$\kk$ is a \regular{} kernel on~$\Omega$, then~$\kk$ is completely
  reducible. Furthermore, the
  set $\Omega_0$ from Definition~\ref{def:comp-irr} is empty,
   the   cardinal of the partition $(\Omega_i, i\in I)$
  is equal to the multiplicity of
  $R_0$ and thus is finite; and,  for all $i \in I$, the kernel $\kk$
  restricted to $\Omega_i$ is a  \regular{} irreducible kernel with spectral
  radius equal to $R_0$.
\end{lemma}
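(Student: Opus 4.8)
The plan is to establish the three claims — that $\Omega_0 = \emptyset$, that the number of irreducible pieces equals the multiplicity of $R_0$, and that each restricted kernel is \regular{} and irreducible with spectral radius $R_0$ — by exploiting the characterization noted in the remark preceding the lemma: a \regular{} kernel has $\un$ as both a left and a right Perron-eigenfunction for the eigenvalue $R_0$. The key structural input is that the equality of in-degree and out-degree at every point forces complete reducibility (the kernel analogue of the line-sum-symmetric matrix result of Eaves et al.), after which the constancy of the degree sharpens the decomposition.

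First I would show that $\kk$ is completely reducible. For any measurable $A$, the regularity identity
\[
  \int_A \kk(x,\Omega)\,\mu(\rd x) = \int_A \kk(\Omega, x)\,\mu(\rd x)
\]
together with $\kk(x,\Omega) = \kk(\Omega, x) = R_0$ a.e.\ gives, after splitting $\Omega = A \sqcup A^c$, the balance relation
\[
  \int_{A \times A^c} \kk(x,y)\,\mu(\rd x)\mu(\rd y)
  = \int_{A^c \times A} \kk(x,y)\,\mu(\rd x)\mu(\rd y).
\]
This is the continuous version of line-sum symmetry and is exactly what is needed to run the argument of \cite[Lemma~1]{LineSumSymmetEaves1985}: one takes a maximal collection of disjoint irreducible ``atoms'' $\Omega_i$ and shows the off-diagonal mass between distinct atoms and between an atom and the leftover set must vanish, giving the decomposition $\kk = \sum_{i} \ind{\Omega_i}\kk\ind{\Omega_i}$ a.e.

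Next I would rule out $\Omega_0$ and identify each piece. If $\mu(\Omega_0) > 0$, then restricting the balance relation to subsets of $\Omega_0$ and using that no mass leaves $\Omega_0$ in the decomposition would force $\kk$ to vanish a.e.\ on $\Omega_0 \times \Omega$, contradicting $\kk(z,\Omega) = R_0 > 0$ for $z \in \Omega_0$; hence $\Omega_0 = \emptyset$. For each $i \in I$, since $\kk$ carries no mass off $\Omega_i$, the restricted kernel still has $\un_{\Omega_i}$ as a constant in- and out-degree function (equal to $R_0$), so it is \regular{} with spectral radius $R_0$ by \eqref{eq:r0-regular} applied on $\Omega_i$, and irreducible by construction. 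Finally, because $T_\kk$ is block-diagonal over the pieces and each block contributes a \emph{simple} top eigenvalue $R_0$ (simplicity of the Perron eigenvalue on an irreducible block, via Krein-Rutman), the eigenspace of $T_\kk$ for $R_0$ has dimension exactly equal to the number of blocks; thus the cardinality of the partition equals the multiplicity of $R_0$, which is finite since $T_\kk$ is compact.

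The main obstacle is the first step: transporting the matrix-theoretic line-sum-symmetry argument to the measure-theoretic setting. In finite dimensions one peels off irreducible diagonal blocks by a finite combinatorial procedure, but here the index set $I$ is only at most countable and ``irreducible atoms'' must be extracted via the measurable definition \eqref{eq:irr} rather than a graph decomposition. I expect the care to lie in verifying that a maximal family of disjoint irreducible sets exhausts $\Omega$ up to a $\mu$-null set and that no positive off-diagonal mass survives between them — this is where the balance relation, rather than a one-sided degree bound, is essential, since it prevents mass from flowing in one direction only and thereby forces the block structure to be exact a.e.
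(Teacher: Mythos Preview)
Your approach matches the paper's. The balance relation you derive is recast there as ``$A$ invariant $\Rightarrow A^c$ invariant,'' and your main obstacle---the measure-theoretic extraction of irreducible atoms---is dispatched by citing \cite[Section~7, Remark~7.1(iv)]{ddz-Re} rather than by adapting the Eaves et al.\ argument directly; the remaining steps (ruling out $\Omega_0$ via the constant degree $R_0>0$, identifying each restricted piece as \regular{} with spectral radius $R_0$, and equating $|I|$ with the multiplicity of $R_0$ by compactness) are essentially as you outline.
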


\begin{proof}
  We recall that a set~$A\in \cf$ is invariant if~$\kk(A^c, A)=0$, where for~$A, B \in
  \cf$:
  \[
    \kk(B, A)=\int_{B\times A} \kk(x, y)\, \mu(\rd x)\mu(\rd
    y).
  \]
  Since for each $x$, the in-degree $\kk(x,\Omega)$ is equal to the
  out-degree
  $\kk(\Omega,x)$, we get by integration $\kk(A,\Omega) =
  \kk(\Omega,A)$, so
  \[
    \kk(A^c,A) = \kk(A^c,\Omega) - k(A^c,A^c) = k(\Omega,A^c) - k(A^c,A^c)
    = k(A,A^c).
  \]
  Therefore if $A$ is invariant, then so is its complement $A^c$. 
  According    to    \cite[Section~7]{ddz-Re}   and    more    precisely
  Remark~7.1(iv),  there  exists then  an  at  most countable  partition
  of~$\Omega$ made of $\Omega_0$ and~$(\Omega_i, i\in I)$
   such that~$\kk=\sum_{i\in I} \kk_i$, with $\kk_i=\ind{\Omega_i}\kk
   \ind{\Omega_i}$, $\mu(\Omega_i)>0$ and~$\kk_i$ restricted to~$\Omega_i$ is irreducible.
  Since~$\un$ is an eigenvector of~$T_\kk$ associated to the eigenvalue~$R_0$ and the
  sets~$\Omega_0$ and $(\Omega_i, i\in I)$ are pairwise disjoint, we
  deduce that~$\Omega_0$ is of zero measure and $\ind{\Omega_i}$ is an
  eigenvector of~$T_{\kk_i}$ with eigenvalue~$R_0>0$, for all~$i \in I$. Hence, all the
  kernels~$\kk_i$ restricted to~$\Omega_i$ are irreducible \regular{}
  kernels with spectral radius equal to $R_0$. Thus, the cardinal
  of~$I$ is equal to the multiplicity of~$R_0$ (for~$T_\kk$). Since~$\kk$ has finite~$L^2$
  double-norm, the operator~$T_\kk$ is compact, and the multiplicity of~$R_0>0$, and thus
  the cardinal of~$I$, is finite. 
\end{proof}

\begin{lemma}\label{lem:uni-crit}
  Let~$\kk$ be a \regular{} irreducible kernel on~$\Omega$. Then the uniform strategy is a
  critical point for~$R_e$ among all the strategies with the same cost in~$(0, 1)$, and
  more precisely: for all~$\eta$ with the same cost in~$(0, 1)$
  as~$\etau\in \cpu$
  and~$\varepsilon>0$ small enough, we have:
  \[
    R_e((1-\varepsilon) \etau + \varepsilon \eta)= R_e(\etau)+O(\varepsilon^2).
  \]
\end{lemma}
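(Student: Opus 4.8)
The plan is to treat $R_e\bigl((1-\varepsilon)\etau + \varepsilon\eta\bigr)$ as a perturbation of the simple Perron eigenvalue of $T_\kk$ and to show that its first-order term vanishes. Write $\etau = t\un$ with $t = 1-c \in (0,1)$, and set $\eta_\varepsilon = (1-\varepsilon)\etau + \varepsilon\eta = t\un + \varepsilon(\eta - t\un)$. Being a convex combination of two elements of $\Delta$, $\eta_\varepsilon$ remains a vaccination strategy for $\varepsilon \in [0,1]$, and since $C$ is affine and $C(\eta) = C(\etau)$, all the $\eta_\varepsilon$ share the cost $c$. Writing $M_\psi$ for the operator of multiplication by a bounded function $\psi$, the effective operator factorizes as $T_{\kk\eta_\varepsilon} = T_\kk M_{\eta_\varepsilon} = A + \varepsilon B$, with $A = t\,T_\kk$ and $B = T_\kk M_{\eta - t\un}$.

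First I would record the spectral data of $A$. As $\kk$ is irreducible, the Krein--Rutman theorem ensures that $R_0 = \rho(T_\kk)$ is a simple, isolated eigenvalue of the compact operator $T_\kk$; hence $tR_0$ is a simple, isolated eigenvalue of $A$, and it is the spectral radius of $A$. Moreover, regularity of $\kk$ means exactly that $\un$ is simultaneously a right and a left Perron eigenfunction of $T_\kk$ (see the remark following Definition~\ref{def:regular}): indeed $(T_\kk^*\un)(y) = \int_\Omega \kk(x,y)\,\mu(\rd x) = R_0$ by constancy of the out-degree. Thus $\un$ is a right eigenfunction of $A$ and a left eigenfunction (an eigenfunction of the adjoint $A^* = t\,T_\kk^*$) for the eigenvalue $tR_0$.

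Next I would invoke analytic perturbation theory for the holomorphic family $\varepsilon \mapsto A + \varepsilon B$ of compact operators. Since $tR_0$ is simple and isolated, there is an eigenvalue branch $\lambda(\varepsilon)$, analytic near $\varepsilon = 0$, with $\lambda(0) = tR_0$ and first-order term
\[
  \lambda'(0) = \frac{\langle \un, B\un\rangle}{\langle \un, \un\rangle},
\]
where $\langle\cdot,\cdot\rangle$ denotes the $L^2(\mu)$ inner product and the denominator equals $\mu(\Omega) = 1 \neq 0$. Because at $\varepsilon = 0$ this branch is the isolated spectral radius, separated from the rest of the spectrum, continuity of the spectrum guarantees that for small $\varepsilon$ the branch $\lambda(\varepsilon)$ stays the eigenvalue of largest modulus, and it remains real and non-negative by Krein--Rutman; hence $\lambda(\varepsilon) = R_e(\eta_\varepsilon)$. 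Computing the first-order term, $B\un = T_\kk(\eta - t\un)$, so
\[
  \langle \un, B\un\rangle = \int_\Omega \int_\Omega \kk(x,y)\,(\eta(y) - t)\, \mu(\rd x)\,\mu(\rd y) = R_0 \int_\Omega (\eta(y) - t)\,\mu(\rd y),
\]
where I again used that the out-degree $\int_\Omega \kk(x,y)\,\mu(\rd x)$ equals $R_0$ by regularity. The equal-cost condition gives $\int_\Omega \eta\,\rd\mu = 1 - c = t$, so $\langle \un, B\un\rangle = 0$ and $\lambda'(0) = 0$. Analyticity then yields $R_e(\eta_\varepsilon) = \lambda(\varepsilon) = tR_0 + O(\varepsilon^2) = R_e(\etau) + O(\varepsilon^2)$, as claimed.

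The main obstacle is the justification that the analytic branch $\lambda(\varepsilon)$ genuinely coincides with the spectral radius $R_e(\eta_\varepsilon)$, i.e.\ transferring the clean finite-dimensional first-order eigenvalue formula to the $L^2$ setting. This rests on compactness of $T_\kk$ (giving a discrete spectrum and an isolated $R_0$), on the simplicity of $R_0$ coming from irreducibility, and on the spectral gap that keeps the perturbed dominant eigenvalue dominant for $\varepsilon$ small. The non-self-adjointness of $B$ — and of $T_{\kk\eta_\varepsilon}$ in general, since \regular{} kernels need not be symmetric — is harmless here because the first-order formula uses the left eigenfunction, and regularity supplies $\un$ as a left Perron eigenfunction for free.
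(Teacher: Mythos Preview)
Your proof is correct and follows essentially the same approach as the paper's: both use that irreducibility makes $(1-c)R_0$ a simple isolated eigenvalue of $T_{\kk\etau}$ with left and right Perron eigenfunction $\un$ (by regularity), then apply a first-order perturbation formula to identify the linear term as $\varepsilon R_0\int(\eta-\etau)\,\rd\mu$, which vanishes under the equal-cost constraint. The only cosmetic difference is that the paper cites a specific effective perturbation theorem (Theorem~2.6 of \cite{EffectivePertuBenoit}) in place of your appeal to general Kato-type analytic perturbation theory, and it does not spell out the argument that the analytic branch coincides with the spectral radius as carefully as you do.
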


\begin{proof}
  Let~$\etau$ be the uniform strategy with cost~$c\in (0, 1)$. Since~$\kk$ is irreducible,
  we get that~$(1-c)R_0$ is a simple isolated eigenvalue of~$\kk \etau$, whose
  corresponding left and right eigenvector are~$\un$ as~$\kk \etau$ is also \regular.
  For~$\eta\in \Delta$, we get that~$T_{\kk((1-\varepsilon) \etau + \varepsilon \eta)}$
  converges to~$T_{\kk \etau}$ (in operator norm, thanks to \eqref{eq:double-norm})
  as~$\varepsilon$ goes down to~$0$. Notice that:
  \[
    \norm{ T_{\kk (\etau +
  \varepsilon(\eta - \etau))} - T_{\kk \etau}}_{L^2}^2=
O(\varepsilon^2).
\]
According to
  \cite[Theorem~2.6]{EffectivePertuBenoit}, we get that for any~$\eta\in \Delta$
  and~$\varepsilon>0$ small enough:
  \begin{align*}
    R_e((1-\varepsilon) \etau + \varepsilon \eta)- R_e(\etau)
    &= \varepsilon \int_\Omega \kk(x,y) (\eta(y) - \etau(y)) \, \mu(\mathrm{d}x) \mu(\mathrm{d}y) + O(\varepsilon^2) \\
    &= \varepsilon R_0 \int_\Omega (\eta(y) - \etau(y)) \,\mu(\mathrm{d}y) + O(\varepsilon^2), 
  \end{align*}
  where for the last equality we used that~$\kk$ is \regular. In particular, if~$\eta$
  and~$\etau$ have the same cost~$c\in (0, 1)$, then \( R_e((1-\varepsilon) \etau +
  \varepsilon \eta) - R_e(\etau)=O(\varepsilon^2)\), which means that the uniform strategy
  is a critical point for~$R_e$ among all the strategies with cost~$c\in (0, 1)$.
\end{proof}

\begin{proof}[Proof of Proposition~\ref{prop:regular}]
  We prove \ref{prop:reg-cvxe}, and thus consider~$\kk$ \regular{} and~$R_e$ convex. We
  first consider the case where~$\kk$ is irreducible. For any~$\eta$,
  Lemma~\ref{lem:uni-crit} and the convexity of~$R_e$ imply that
  \[
    R_e(\etau) + O(\varepsilon^2)
    = R_e((1-\varepsilon)\etau + \varepsilon \eta)
    \leq (1-\varepsilon)R_e(\etau) + \varepsilon R_e(\eta),
  \]
  where~$\etau$ the uniform strategy with the same cost as~$\eta$. Letting~$\varepsilon$
go  to~$0$, we get~$R_e(\eta)\geq R_e(\etau)$, so~$R_e$ is minimal at~$\etau$.

  Since~$C(\etau)=c$ and~$R_e(\etau)=(1-c) R_0$, we deduce that~$\mir(c) = (1 -
  c) R_0$ and thus, the Pareto frontier is a segment given by~$\F = \{ (c, (1-c)R_0) \,
  \colon \, c\in [0, 1] \}$.

  \medskip  In what  follows, we  write  $R_e[\kk]$ to  stress that  the
  reproduction function on $\Delta$ defined by~\eqref{eq:def-Re} depends
  on   the  kernel   $\kk$:   $R_e[\kk](\eta)=\rho(T_{\kk  \eta})$   for
  $\eta\in  \Delta$.    If~$\kk$  is  not  irreducible,   then  use  the
  representation     from      Lemma~\ref{lem:reduction},     to     get
  that~$R_e[\kk]=\max_{i\in I} R_e[\kk_i]$. Since the cost is affine, we
  deduce that a strategy~$\eta$  with~$R_e[\kk](\eta)=\ell \in [0, R_0]$
  is   Pareto  optimal   if  and   only  if,   for  all~$i\in   I$,  the
  strategies~$\eta_i=\eta\ind{\Omega_i}$    are    Pareto optimal    for    the
  kernel~$\kk$  restricted to~$\Omega_i$  and~$R_e[\kk_i](\eta_i)=\ell$;
  see also~\cite[Corollary~7.4]{ddz-Re}.   
  Then     the      first     step      of     the      proof     yields
  that~$\eta_i   =   \ell   \ind{\Omega_i}$   and   thus   the   uniform
  strategy~$\etau=\ell\ind{\Omega}$ is  Pareto optimal. This ends the  proof of
  \ref{prop:reg-cvxe}.

  \medskip

  We now prove \ref{prop:reg-cave}.  We first check that
 the kernel $\kk$ is irreducible.
Thanks to Lemma~\ref{lem:reduction},
  the kernel $\kk$ is  completely reducible with a zero measure $\Omega_0$.
  However, \cite[Lemma~7.3]{ddz-Re} also implies that it is
  \emph{monatomic}, a notion introduced in~\cite[Section~3.4]{ddz-Re}
  which intuitively states that $\kk$ has only one irreducible
  component. Together with complete reducibility, this implies that
  $\kk$ is  irreducible.
  The rest of the proof is then similar to the proof
  of~\ref{prop:reg-cvxe}
  under the irreducibility assumption.
\end{proof}

\section{\Regular{} symmetric kernels of rank two}\label{sec:rank-2-reg}

\subsection{Pareto and anti-Pareto frontiers}
\label{sec:pp-f-rk2}
Any \regular{} symmetric kernel may be decomposed spectrally in
$L^2(\Omega^2,\mu^{\otimes 2})$  as $k(x,y) = R_0+ \sum_{n\in
\N^*} \varepsilon_n \alpha_n(x)\alpha_n(y)$, with~$\varepsilon_n\in \{-, +\}$,~$(\alpha_n,
n\in \N^*)$ an orthogonal family of~$L^2$ also orthogonal to~$\un$. As an application
of the results from the previous section, we will treat the case of symmetric \regular{}
kernel  whose associated operator is  of rank~$2$, where one can explicitly minimize and maximize~$R_e$ among
all strategies of a given cost.

We suppose that~$\Omega = [0,1)$ is equipped with the Borel $\sigma$-field~$\cf$ and a
probability measure~$\mu$ whose cumulative distribution function~$\varphi$, defined
by~$\varphi(x)=\mu([0,x])$ for~$x\in \Omega$, is continuous and increasing.  We consider
the following two kernels on~$\Omega$:
\begin{equation}\label{eq:k+}
  \kk^\varepsilon(x,y) = R_0 + \varepsilon \alpha(x) \alpha(y),
  \quad\text{with}\quad \varepsilon\in \{-, +\},
\end{equation}
where $R_0 > 0$ and~$\alpha\in L^2$ is strictly increasing and satisfies:
\begin{equation}
  \label{eq:alpha-cond}
  \sup_{\Omega} \alpha^2 \leq R_0
  \quad\text{and}\quad
  \int_\Omega \alpha \,\mathrm{d} \mu = 0. 
\end{equation}

\begin{remark}[Generality]
  We note that this particular choice of~$\Omega$ may be made without loss of generality,
  and that the strict monotonicity assumption on~$\alpha$ is almost general: we refer the
  interested reader to Section~\ref{sec:wlog} for further discussion of this point.
\end{remark}

For~$\varepsilon\in \{-, +\}$, the kernel~$\kk^\varepsilon$ is symmetric and \regular.
Furthermore, we have that~$R_0$ and~$\varepsilon \int_\Omega \alpha^2 \,\mathrm{d} \mu$ are the only
non-zero eigenvalues (and their multiplicity is one) of~$T_{\kk^\varepsilon}$ with corresponding
eigenvector~$\un$ and $\alpha$. Since~$\alpha^2 \leq R_0$, we also get that~$R_0$ is
indeed the spectral radius of~$T_{\kk^\varepsilon}$. 

The Pareto (resp.\ anti-Pareto) frontier is already greedily parametrized by the uniform
strategies for the kernel~$\kk^+$ (resp.~$\kk^-$), see Corollary~\ref{cor:regular}. The
following result restricts the choice of anti-Pareto (resp. Pareto) optimal strategies to
two extreme strategies. Hence, in order to find the optima, it is enough to compute and compare
the two values of~$R_e$ for each cost.

We recall the set of uniform strategies $ \cpu=\{ t\un\, \colon\, t\in [0,1]\}$ and 
consider the following set of extremal strategies:
\[
  \leftStrats = \left\{ \ind{[0, t)}\, \colon\, t \in [0,1]\right\}
  \quad\text{and}\quad
  \rightStrats = \left\{ \ind{[t,1)}\, \colon\, t \in [0, 1]\right\}
\]
as well as the following set of strategies which contains~$ \cpu$ thanks to
\eqref{eq:alpha-cond}:
\[
  \orthStrats=\left\{\eta\in \Delta\, \colon \, \int_\Omega \alpha \, \eta\,
  \mathrm{d} \mu=0\right\}.
\]
Recall that strategies are defined up to the a.s.\ equality. The proof of the next
proposition is given is Section~\ref{sec:proof-kk+-}

\begin{proposition}[Optima are uniform or on the sides]\label{prop:rank2}
  Let $[0,1)$ be endowed with a probability measure whose cumulative
  distribution function is increasing and continuous.
  Let~$\kk^\varepsilon$ 
  be given by
  \eqref{eq:k+} with~$R_0>0$ and~$\alpha$ a strictly increasing
  function on~$[0, 1)$ such that~\eqref{eq:alpha-cond} holds.
\begin{enumerate}[(i)]
    \item \textbf{The kernel~$\kk^+$.} A strategy is Pareto optimal if and only if it belongs to
      $\orthStrats$. In particular, for any~$c\in[0,1]$, the strategy $(1-c)\un$ costs~$c$
      and is Pareto optimal. The only possible anti-Pareto strategies of cost~$c$
      are~$\ind{[0, 1-c)}$ and~$\ind{[c, 1)}$. In other words, 
      \[
	\cp=\orthStrats
	\quad\text{and}\quad
	\cpa\subset \leftStrats \cup \rightStrats.
      \]
    \item \textbf{The kernel~$\kk^-$.} A strategy is anti-Pareto optimal if and only if it belongs
      to~$\orthStrats$. In particular, for any~$c\in[0,1]$,  the strategy~$(1-c)\un$
      costs~$c$ and is anti-Pareto optimal. The only possible Pareto strategies of
      cost~$c$ are~$\ind{[0, 1-c)}$ and~$\ind{[c, 1)}$. In other words, 
      \[
	\cp\subset \leftStrats\cup \rightStrats \quad\text{and}\quad \cpa=\orthStrats.
      \]
  \end{enumerate}
  In both cases, we have~$\cmar=0$ and~$ \cmir =1$. 
\end{proposition}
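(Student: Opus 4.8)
The plan is to reduce $R_e$ to a $2\times2$ spectral problem, read off the ``orthogonal'' halves from an elementary inequality, and obtain the ``one-sided'' halves by passing to indicator strategies. Since $T_{\kk^\varepsilon\eta}$ sends $L^2$ into $\mathrm{span}(\un,\alpha)$, it has rank at most two. Writing
\[
  p=\int_\Omega\eta\,\mathrm{d}\mu=1-C(\eta),\qquad q=\int_\Omega\eta\,\alpha\,\mathrm{d}\mu,\qquad r=\int_\Omega\eta\,\alpha^2\,\mathrm{d}\mu,
\]
the restriction of $T_{\kk^\varepsilon\eta}$ to $\mathrm{span}(\un,\alpha)$ is, in that basis, $M=\bigl(\begin{smallmatrix} R_0 p & R_0 q\\ \varepsilon q & \varepsilon r\end{smallmatrix}\bigr)$, so $R_e(\eta)$ is the largest root of $\lambda^2-(R_0p+\varepsilon r)\lambda+\varepsilon R_0(pr-q^2)=0$; in particular $R_e$ depends on $\eta$ only through $(p,q,r)$. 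For $\varepsilon=+$ this reads $(\lambda-R_0p)(\lambda-r)=R_0q^2$ and for $\varepsilon=-$ it reads $(\lambda-R_0p)(\lambda+r)=-R_0q^2$.

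The two ``orthogonal'' statements then follow at once. The hypotheses give $r\le R_0\int_\Omega\eta\,\mathrm{d}\mu=R_0p$ (using $\sup_\Omega\alpha^2\le R_0$) and $q^2\le pr$ (Cauchy--Schwarz). Evaluating the characteristic polynomial at $\lambda=R_0p$ yields $-R_0q^2\le0$ for $\kk^+$ and $R_0q^2\ge0$ for $\kk^-$; since moreover in the case $\kk^-$ the roots have opposite signs (their product being $-R_0(pr-q^2)\le0$), the largest root satisfies $R_e(\eta)\ge R_0p$ for $\kk^+$ and $R_e(\eta)\le R_0p$ for $\kk^-$, with equality if and only if $q=0$. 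The uniform strategy $(1-c)\un$ has $q=0$ and value $R_0(1-c)$, and is Pareto (resp.\ anti-Pareto) optimal by Corollary~\ref{cor:regular}, so $\mir(c)=R_0(1-c)$ (resp.\ $\mar(c)=R_0(1-c)$); hence $\cp=\{q=0\}=\orthStrats$ for $\kk^+$ and $\cpa=\orthStrats$ for $\kk^-$.

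For the remaining inclusions I would first pass to indicators. Since $T_{\kk^+}$ is positive semidefinite and $T_{\kk^-}$ has $R_0$ as a simple eigenvalue with all others nonpositive, $R_e$ is convex for $\kk^+$ and concave for $\kk^-$ (the facts behind Corollary~\ref{cor:regular}); Bauer's maximum principle then gives that the anti-Pareto (resp.\ Pareto) optimum on the slice $\{C(\eta)=c\}$ is attained at an extreme point, i.e.\ at $\eta=\ind A$ with $\mu(A)=1-c$. It remains to prove such an optimal $A$ is one-sided for the order carried by the increasing function $\alpha$, i.e.\ $\ind A\in\leftStrats\cup\rightStrats$. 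As $R_e$ is strictly monotone in $r$ at fixed $(p,q)$, I would optimise in two stages: at fixed $q$ extremise $r$, which by a bathtub argument makes $A$ a level set of the quadratic $w\mapsto(R_e^{\mathrm{opt}}-R_0p)\,w^2+2R_0q\,w$ in $w=\alpha$, and then optimise over $q$. The hardest step is to show this level set is genuinely one-sided rather than two-sided (an outside-of-interval set for $\kk^+$, a middle interval for $\kk^-$): this is exactly where $\sup_\Omega\alpha^2\le R_0$ is essential, since the quadratic coefficient $R_e^{\mathrm{opt}}-R_0p$ lies in $[0,R_0c]$ while the linear part carries the weight $R_0$, which forces one root of the parabola outside the range $\alpha(\Omega)$, so that on the support the set reduces to $\{\alpha<t\}$ or $\{\alpha>t\}$. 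Equivalently, writing $R_e=R_0p+\tfrac12\big(\sqrt{(R_0p-r)^2+4R_0q^2}-(R_0p-r)\big)$ and parametrising the upper boundary $r=r_{\max}(q)$ of the convex set of attainable $(p,q,r)$, the gain from the extreme values $q\in\{q_{\min},q_{\max}\}$ dominates the accompanying loss in $r$; these extreme values are realised only by the one-sided intervals, whence $\cpa\subset\leftStrats\cup\rightStrats$ for $\kk^+$ and $\cp\subset\leftStrats\cup\rightStrats$ for $\kk^-$. The case $\kk^-$ follows from $\kk^+$ by exchanging convex/concave, maximum/minimum and the two orientations of the parabola.

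Finally, $\kk^\varepsilon(x,y)=R_0+\varepsilon\alpha(x)\alpha(y)>0$ for $\mu\otimes\mu$-almost every $(x,y)$, since $|\alpha(x)\alpha(y)|\le\tfrac12(\alpha(x)^2+\alpha(y)^2)\le R_0$ with equality only on a $\mu\otimes\mu$-null set; Lemma~\ref{lem:k>0-c} then gives $\cmar=0$ and $\cmir=1$ in both cases.
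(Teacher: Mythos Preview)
Your reduction to the $2\times2$ matrix and the treatment of the ``orthogonal'' halves are correct and, if anything, slightly slicker than the paper's: evaluating the characteristic polynomial at $R_0p$ and reading off the sign is a clean way to get $R_e\gtrless R_0p$ with equality iff $q=0$. The final paragraph on $\cmar=0$, $\cmir=1$ via Lemma~\ref{lem:k>0-c} is also fine.

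The ``one-sided'' argument, however, has two genuine gaps. First, Bauer's principle only tells you that \emph{some} optimum on the slice $\{C=c\}$ is an extreme point; it does not rule out non-indicator optima. Since the claim is the full inclusion $\cpa\subset\leftStrats\cup\rightStrats$ (resp.\ $\cp\subset\leftStrats\cup\rightStrats$), you must show every optimiser is of this form, and $R_e$ is not strictly convex (resp.\ concave) in general, so this does not follow. Second, even restricted to indicators, your two-stage/bathtub sketch is not a proof: the quadratic you write down mixes the first-order condition for $R_e$ with the constrained $r$-extremisation, and the assertion that ``one root of the parabola lies outside $\alpha(\Omega)$'' is not established --- a small quadratic coefficient relative to the linear one does not by itself force a root outside a prescribed interval. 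You would need a genuine bound relating $R_e^{\mathrm{opt}}-R_0p$, $q$, and $\sup|\alpha|$ to push this through, and you have not supplied one.

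The paper avoids both difficulties by working directly with arbitrary $\eta$ (no reduction to indicators) and comparing it to $\etag=\ind{[0,1-c)}$ and $\etad=\ind{[c,1)}$ via a strict monotonicity in the stochastic order. For $\kk^-$ it rewrites
\[
4R_e^-(\eta)=4\!\int\eta\,\mathrm{d}\mu - H(\eta)^2,\qquad H(\eta)=\sqrt{\!\int(1+\alpha)^2\eta}-\sqrt{\!\int(1-\alpha)^2\eta},
\]
observes that $H$ is strictly increasing for the stochastic order (because $(1\pm\alpha)^2$ are monotone in $x$), hence $H(\etag)<H(\eta)<H(\etad)$ and $R_e^-(\eta)>\min(R_e^-(\etag),R_e^-(\etad))$ for every $\eta\neq\etag,\etad$. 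For $\kk^+$ it uses the variational formula $R_e^+(\eta)=\sup_{h\in B_+^\eta}\big((\int h\eta)^2+(\int h\alpha\eta)^2\big)$, pushes the optimal $h$ forward to $\etag$ and $\etad$ via monotone rearrangement maps, and obtains the strict inequality $R_e^+(\eta)<\max(R_e^+(\etag),R_e^+(\etad))$. Either of these devices would close your argument; your current sketch does not.
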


\begin{remark}\label{rem:ass-alpha}
  Intuitively, the populations~$\{\alpha<0\}$ and~$\{\alpha>0\}$ behave in an assortative
  way for~$\kk^+$ and in a disassortative way for $\kk^-$. As in
  Section~\ref{sec:ass-disass}, the uniform strategies are Pareto optimal in the
  ``assortative''~$\kk^+$ case and anti-Pareto optimal in the ``disassortative''~$\kk^-$ 
  case. 
\end{remark}

\begin{remark}\label{rem:alpha-sym}
  Under the assumptions of Proposition~\ref{prop:rank2}, if furthermore~$\alpha$ is
  anti-symmetric with respect to~$1/2$, that is~$\alpha(x)=-\alpha(1-x)$ for~$x\in (0,
  1)$, and~$\mu$ is symmetric with respect to~$1/2$, that is~$\mu([0, x])=\mu([1-x,1))$,
  then it is easy to check from the proof of Proposition~\ref{prop:rank2} that the
  strategies from~$\leftStrats$ and~$\rightStrats$ are both optimal:~$\cpa=
  \leftStrats\cup\rightStrats$ for~$\kk^+$ and~$\cp= \leftStrats\cup\rightStrats$
  for~$\kk^-$. We plotted such an instance of~$\kk^+$ and the corresponding Pareto and
  anti-Pareto frontiers in Figure~\ref{fig:k+}. We refer to Section~\ref{ssec:infinite}
  for an instance where~$\alpha$ is not symmetric and~$\cp\neq \leftStrats\cup
  \rightStrats$ for~$\kk^-$.
\end{remark}

\begin{figure}
  \begin{subfigure}[T]{.5\textwidth} \centering
    \includegraphics[page=5]{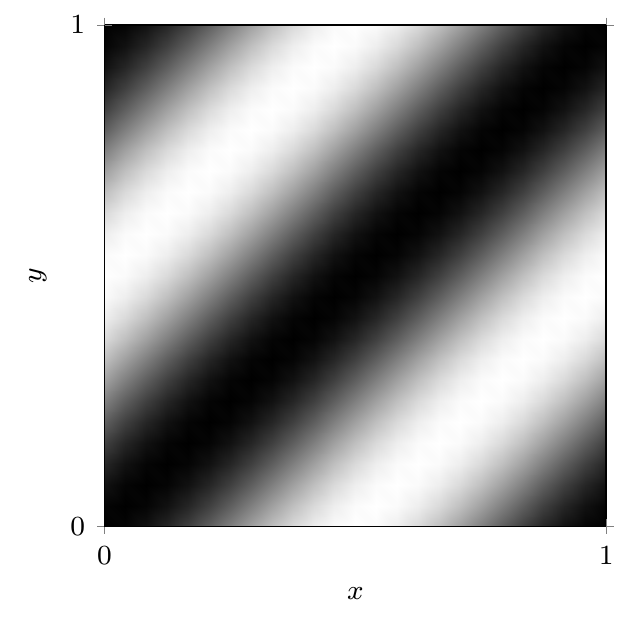}
    \caption{Grayplot of the kernel~$\kk^+(x,y) = 1 + (2x -1)(2y-1)$ on~$[0, 1)$.}
    \label{fig:k+-kernel}
  \end{subfigure}%
  \begin{subfigure}[T]{.5\textwidth} \centering
    \includegraphics[page=6]{regular}
    \caption{Solid line: the Pareto frontier~$\F$; dashed line: the
      anti-Pareto frontier~$\AF$ (which corresponds to the uniform strategies); blue
    region: all possible outcomes~$\FF$.}
  \label{fig:k+-frontier}
\end{subfigure}%
  \caption{An example of a \regular{} kernel operator of rank 2.}
  \label{fig:k+} 
\end{figure}

\subsection{On the choice of~\texorpdfstring{$\Omega=[0, 1)$}{Omega=[0, 1)} and on the monotonicity assumption}\label{sec:wlog}

Using a reduction model technique from \cite[Section~7]{ddz-theo}, let us first see that
there is no loss of generality by considering the kernel~$\kk^\varepsilon=R_0 +
\varepsilon \alpha \otimes \alpha$ on~$\Omega=[0, 1)$ endowed with the Lebesgue measure
$\mu$ and with~$\alpha$ non-decreasing.

Suppose that the function~$\alpha$ in \eqref{eq:k+} is replaced by  an~$\R$-valued
measurable function~$\alpha_0$ defined on a general probability space~$(\Omega_0,
\mathscr{F}_0, \mu_0)$ such that \eqref{eq:alpha-cond} holds. Thus, with obvious
notations, for~$\varepsilon\in \{-, +\}$, the kernel $R_0+\varepsilon \alpha_0\otimes
\alpha_0$ is a kernel on $\Omega_0$. Denote by~$F$ the repartition function of~$\alpha_0$
(that is,~$F(r)=\mu_0(\alpha_0\leq r)$ for~$r\in \R$) and take~$\alpha$ as the quantile
function of~$\alpha_0$, that is, the right continuous inverse of~$F$. Notice the
function~$\alpha$ is defined on the probability space~$(\Omega, \mathscr{F}, \mu)$ is
non-decreasing and satisfies \eqref{eq:alpha-cond}. Consider the probability kernel
$\kappa \, \colon \, \Omega_0 \times \cf \to [0,1]$ defined by $\kappa(x,
\cdot)=\delta_{F(\alpha_0(x))}(\cdot)$, with~$\delta$ the Dirac mass, if~$\alpha$ is
continuous at~$\alpha_0(x)$ (that is, $F(\alpha_0(x)-)=F(\alpha_0(x))$) and the uniform
probability measure on~$[F(\alpha_0(x)-), F(\alpha_0(x))]$ otherwise. On the measurable
space~$(\Omega_0 \times \Omega, \cf_0 \otimes \cf)$, we consider the probability
measure~$\nu(\mathrm{d}x_1,\mathrm{d}x_2) = \mu_0(\mathrm{d}x_1) \kappa(x_1,
\mathrm{d}x_2)$, whose marginals are exactly~$\mu_0$ and~$\mu$. Then, for~$\varepsilon\in
\{-, +\}$, we have that~$\nu(\mathrm{d}x_1, \mathrm{d}x_2) \otimes \nu(\mathrm{d}y_1,
\mathrm{d}y_2)$-a.s.:
\[
  R_0 +\varepsilon \alpha_0(x_1) \alpha_0(y_1) = R_0 + \varepsilon \alpha(x_2) \alpha(y_2).
\]
According to \cite[Section~7.3]{ddz-theo}, see in particular Proposition~7.3 therein, the
kernels~$R_0+\varepsilon \alpha_0\otimes \alpha_0$ and~$R_0+\varepsilon \alpha\otimes
\alpha$ are coupled and there is a correspondence between the corresponding
(anti-)\-Pareto optimal strategies and their (anti-)Pareto frontiers are the same.

Hence, there is no loss in generality in assuming that the function~$\alpha$ in
\eqref{eq:k+} is indeed defined on~$[0, 1)$ and is non-decreasing.

\bigskip
  
On the contrary, one cannot assume in full generality that~$\alpha$ is \emph{strictly}
increasing, as when it is only non-decreasing, the situation is more complicated. Indeed,
let us take the parameters $R_0 = 1$ and~$\alpha = \ind{[0,0.5)} - \ind{[0.5,1)}$. Then,
the kernel~$\kk^-$ is complete bi-partite:~$\kk^- = \ind{[0,0.5) \times [0.5,1)} +
\ind{[0.5,1) \times [0,0.5)}$. Hence, according to
Theorem~\ref{th:dis-assortative}~\ref{pt:multi}, we have~$\cmir = 0.5$ for the
kernel~$\kk^-$. In a similar fashion, one can see that~$\kk^+ = \ind{[0,0.5) \times
[0,0.5)} + \ind{[0.5,1) \times [0.5,1)}$ is assortative and reducible; it is then easy to
check that~$\cmar = 0.5$ for the kernel~$\kk^+$. However, it is still true that, for all
costs $c$:
\begin{itemize}
  \item~$\ind{[0,1-c)}$ or~$\ind{[c,1)}$ is solution of Problem~\eqref{eq:prob-max-Re}
    when the kernel~$\kk^+$ is considered,
  \item~$\ind{[0,1-c)}$ or~$\ind{[c,1)}$ is solution of Problem~\eqref{eq:prob-min-Re}
    when the kernel~$\kk^-$ is considered.
\end{itemize}
From the proof of Proposition~\ref{prop:rank2}, we can not expect to have strict
inequalities in \eqref{eq:comparison} if~$\alpha$ is only non-decreasing, and thus one can
not expect~$\leftStrats \cup \rightStrats$ to contain~$\cpa$ for the kernel~$\kk^+$
or~$\cp$ for the kernel~$\kk^-$.

\subsection{Proof of Proposition~\ref{prop:rank2}}\label{sec:proof-kk+-}

We assume that~$R_0>0$ and~$\alpha$ is a strictly increasing function
defined on~$\Omega=[0, 1)$
such that~\eqref{eq:alpha-cond} holds. Without loss of generality, we shall assume
that~$R_0=1$ unless otherwise specified. We write~$R_e^\varepsilon$ for the effective
reproduction function associated to the kernel~$\kk^\varepsilon$. We shall also write
$\varepsilon a$ for~$a$ if~$\varepsilon=+$ and~$-a$ if $\varepsilon=-$. We first
rewrite~$R_e^\varepsilon$ in two different ways in Section~\ref{sec:2formula}. Then, we consider the
kernel~$\kk^-$ in Section~\ref{sec:k-} and the kernel $\kk^+$ in Section~\ref{sec:k+}.

\subsubsection{Two expressions of the effective reproduction function}\label{sec:2formula}

We provide an explicit formula for the function~$R_e^\varepsilon$,
and an alternative variational formulation, both of which will
be needed below.

\begin{lemma}\label{lem:2formula}
  Assume~$R_0=1$ and~$\alpha$ is a strictly increasing function defined
  on~$\Omega=[0, 1)$ such
  that~\eqref{eq:alpha-cond} holds. We have for~$\varepsilon\in \{+, -\}$ and~$\eta\in
  \Delta$:
  \begin{equation}\label{eq:re-explicit}
    2R_e^\varepsilon(\eta)= \int \eta\, \mathrm{d} \mu +\varepsilon \int \alpha^2\, \eta\, \mathrm{d} \mu
    + \sqrt{\left(\int \eta\, \mathrm{d} \mu -\varepsilon \int \alpha^2\, \eta\, \mathrm{d} \mu  \right)^2
    + 4\varepsilon \left( \int \alpha\, \eta\, \mathrm{d} \mu\right)^2}.
  \end{equation}
  Alternatively,~$R_e^\varepsilon(\eta)$ is the solution of the variational problem:
  \begin{equation}\label{eq:variational-formula}
    R_e^\varepsilon(\eta) = \sup_{h\in B^\eta_+} \left( \int_0^1 h\, \eta \, \mathrm{d} \mu \right)^2
    + \varepsilon \left( \int_0^1 h\, \alpha\, \eta \, \mathrm{d} \mu \right)^2,
  \end{equation}
  where
  \[
    B_+^\eta=\left\{h\in L^2_+\, \colon\, \int_0^1 h^2\, \eta \,
    \mathrm{d} \mu = 1\right\}.
  \]
  The supremum in \eqref{eq:variational-formula} is reached for the right Perron
  eigenfunction of~$T_{\kk \eta}$ chosen in ~$ B_+^\eta$. 
\end{lemma}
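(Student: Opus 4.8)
The plan is to exploit that the operator $T_{\kk^\varepsilon\eta}$ has rank at most two, so that the explicit formula reduces to a $2\times 2$ eigenvalue computation and the variational formula reduces to the Rayleigh--Ritz principle. I take $R_0=1$ throughout. First I would observe that for $g\in L^2$ and $x\in\Omega$,
\[
  T_{\kk^\varepsilon\eta}(g)(x)=\int_\Omega\big(1+\varepsilon\alpha(x)\alpha(y)\big)\eta(y)g(y)\,\mathrm{d}\mu(y)
  =\Big(\int_\Omega\eta g\,\mathrm{d}\mu\Big)+\varepsilon\alpha(x)\Big(\int_\Omega\alpha\eta g\,\mathrm{d}\mu\Big),
\]
so the range of $T_{\kk^\varepsilon\eta}$ lies in the plane spanned by $\un$ and $\alpha$. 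Hence every non-zero eigenvalue of $T_{\kk^\varepsilon\eta}$ is an eigenvalue of the matrix of its restriction to that plane. Writing $I_0=\int_\Omega\eta\,\mathrm{d}\mu$, $I_1=\int_\Omega\alpha\eta\,\mathrm{d}\mu$ and $I_2=\int_\Omega\alpha^2\eta\,\mathrm{d}\mu$, a direct computation in the basis $(\un,\alpha)$ gives
\[
  A=\begin{pmatrix} I_0 & I_1 \\ \varepsilon I_1 & \varepsilon I_2\end{pmatrix}.
\]

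Next I would compute the spectrum of $A$: its trace is $I_0+\varepsilon I_2$, its determinant is $\varepsilon(I_0I_2-I_1^2)$, and the discriminant of its characteristic polynomial simplifies (using $\varepsilon^2=1$) to $(I_0-\varepsilon I_2)^2+4\varepsilon I_1^2$. This is non-negative: for $\varepsilon=+$ it is obvious, and for $\varepsilon=-$ it equals $(I_0+I_2)^2-4I_1^2\geq(I_0-I_2)^2\geq0$ by the Cauchy--Schwarz inequality $I_1^2\leq I_0I_2$ in $L^2(\eta\,\mathrm{d}\mu)$. The two roots are therefore real, and the larger one $\lambda_+$ satisfies $2\lambda_+=I_0+\varepsilon I_2+\sqrt{(I_0-\varepsilon I_2)^2+4\varepsilon I_1^2}$, the right-hand side of \eqref{eq:re-explicit}. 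It remains to identify $R_e^\varepsilon(\eta)=\rho(T_{\kk^\varepsilon\eta})$ with $\lambda_+$. Since $T_{\kk^\varepsilon\eta}$ is compact with real spectrum, its spectral radius is the largest modulus of its eigenvalues, namely $\max(\lambda_+,-\lambda_-,0)$. For $\varepsilon=+$ both the trace and the determinant are non-negative, so $\lambda_-\geq0$ and $\rho=\lambda_+$. For $\varepsilon=-$ one has $\lambda_+\geq0\geq\lambda_-$, and $\lambda_+\geq-\lambda_-$ is equivalent to $I_0-I_2=\int_\Omega(1-\alpha^2)\eta\,\mathrm{d}\mu\geq0$, which holds because $\alpha^2\leq R_0=1$; hence again $\rho=\lambda_+$, proving \eqref{eq:re-explicit}.

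For the variational formula \eqref{eq:variational-formula} I would expand, for $h\in L^2_+$,
\[
  \Big(\int_0^1 h\eta\,\mathrm{d}\mu\Big)^2+\varepsilon\Big(\int_0^1 h\alpha\eta\,\mathrm{d}\mu\Big)^2
  =\int_{\Omega\times\Omega}h(x)\eta(x)\,\kk^\varepsilon(x,y)\,\eta(y)h(y)\,\mathrm{d}\mu(x)\mathrm{d}\mu(y)=:Q(h),
\]
which is the quadratic form of the symmetric, compact, non-negative operator $S=T_{\eta^{1/2}\kk^\varepsilon\eta^{1/2}}$ evaluated at $\eta^{1/2}h$: indeed $Q(h)=\langle S(\eta^{1/2}h),\eta^{1/2}h\rangle$ and $\int_\Omega h^2\eta\,\mathrm{d}\mu=\|\eta^{1/2}h\|_2^2$. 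Since $T_{\kk^\varepsilon\eta}\phi=\lambda\phi$ with $\lambda\neq0$ yields $S(\eta^{1/2}\phi)=\lambda\,\eta^{1/2}\phi$ and conversely, the operators $S$ and $T_{\kk^\varepsilon\eta}$ share their non-zero eigenvalues, so $\rho(S)=R_e^\varepsilon(\eta)$. As $S$ is self-adjoint and compact, the Rayleigh--Ritz principle gives that its largest eigenvalue equals $\sup\{\langle Sf,f\rangle:\|f\|_2=1\}$, attained at a top eigenfunction; by Krein--Rutman this largest eigenvalue is $\rho(S)=R_e^\varepsilon(\eta)$ and it is reached at a non-negative Perron eigenfunction $\psi=\eta^{1/2}\phi$. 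Because the kernel of $S$ is non-negative, the supremum is unchanged when restricted to $f\geq0$; reducing further to $f$ supported on $\{\eta>0\}$ (which leaves $S$ and its form untouched) and setting $h=\eta^{-1/2}f$ converts this into $\sup_{h\in B_+^\eta}Q(h)=R_e^\varepsilon(\eta)$, with the supremum reached at $h=\eta^{-1/2}\psi=\phi$, the right Perron eigenfunction of $T_{\kk^\varepsilon\eta}$.

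The routine parts are the matrix computation and the expansion of $Q$. The steps requiring care are the identification of $\rho$ with the larger root $\lambda_+$ --- this is exactly where the hypothesis $\sup_\Omega\alpha^2\leq R_0$ is decisive in the $\varepsilon=-$ case, since it is equivalent to $I_0\geq I_2$ and hence to $\lambda_+\geq|\lambda_-|$ --- and, for the variational formula, the reduction to non-negative test functions supported on $\{\eta>0\}$, so that the substitution $h=\eta^{-1/2}f$ remains legitimate even when $\eta$ vanishes on a set of positive measure.
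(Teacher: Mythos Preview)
Your proof is correct and, for the explicit formula, follows the paper's approach of restricting $T_{\kk^\varepsilon\eta}$ to $\mathrm{span}(\un,\alpha)$ and reading off the eigenvalues of the resulting $2\times 2$ matrix; you add the verification (which the paper leaves implicit) that the spectral radius equals the larger root $\lambda_+$ rather than $|\lambda_-|$, and you correctly locate the role of the hypothesis $\sup\alpha^2\leq R_0$ in the $\varepsilon=-$ case. For the variational formula the paper passes to the self-adjoint operator $\mathcal{T}_{\kk\eta}$ on the weighted space $L^2(\eta\,\mathrm{d}\mu)$ and cites an external lemma to identify its spectral radius with that of $T_{\kk\eta}$, whereas you symmetrize to $S=T_{\eta^{1/2}\kk^\varepsilon\eta^{1/2}}$ on $L^2(\mu)$ and match the non-zero spectra directly; the two devices are standard equivalents, and yours has the advantage of being self-contained.

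One small wrinkle: the substitution $h=\eta^{-1/2}f$ is not a bijection onto $B_+^\eta$ when $\eta$ vanishes on a set of positive measure, since for a generic $f$ supported on $\{\eta>0\}$ the function $\eta^{-1/2}f$ need not lie in $L^2(\mu)$. Your argument is nonetheless sound, because the upper bound $\sup_{h\in B_+^\eta}Q(h)\leq\rho(S)$ follows from the always-legitimate map $h\mapsto f=\eta^{1/2}h$, and equality (with attainment) follows from evaluating at the right Perron eigenfunction $\phi$ of $T_{\kk^\varepsilon\eta}$, which lies in $L^2(\mu)$ by definition; it would be cleaner to phrase the two inequalities this way rather than as a single change of variable.
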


\begin{proof}
  We first prove~\eqref{eq:re-explicit}. For all~$\eta \in \Delta$,
  the rank of the kernel operator~$T_{\kk^\varepsilon \eta}$ is
  smaller or equal to~$2$ and~%
  \(
  \mathrm{Im} (T_{\kk^\varepsilon \eta})
  \subset \mathrm{Vect}(\un, \alpha)
  \).
  The matrix of~$T_{\kk^\varepsilon \eta}$ in the basis~$(\un, \alpha)$ of the
  range of~$T_{\kk^\varepsilon \eta}$ is given by:
  \begin{equation}\label{eq:matrix-representation}
    \begin{pmatrix}
      \int \eta\, \mathrm{d} \mu & \int \alpha\, \eta\, \mathrm{d} \mu \\
   \varepsilon \int \alpha\, \eta\, \mathrm{d} \mu & \varepsilon \int
   \alpha^2\, \eta\, \mathrm{d} \mu 
    \end{pmatrix}.
  \end{equation}
  An explicit computation of the spectrum of this matrix yields
  Equation~\eqref{eq:re-explicit} for its largest eigenvalue.

  \medskip

The variational formula~\eqref{eq:variational-formula} is a direct
consequence of general Lemma~\ref{lem:min-max} below.
\end{proof}

 \begin{lemma}[Variational formula for~$R_e$ when~$\kk$ is symmetric]
   \label{lem:min-max}
   Suppose that~$\kk$ is a symmetric kernel on~$\Omega$ with a finite
   double norm in~$L^2$. Then, we have that for all~$\eta\in \Delta$:
  \begin{equation}
    \label{eq:Re-var}
    R_e(\eta) = \sup_{h\in B^\eta_+} \, \int_{\Omega \times \Omega}
  h(x) \eta(x)\, \kk(x,y) \, h(y) \eta(y) \, \mu(\mathrm{d}x) 
    \mu(\mathrm{d}y),
  \end{equation}
  where
 \[
   B_+^\eta=\left\{h\in L^2_+\, \colon\, \int_\Omega h^2\, \eta \,
       \mathrm{d} \mu = 1\right\}.
   \]
The supremum in \eqref{eq:Re-var} is reached for the right Perron
eigenfunction of~$T_{\kk \eta}$ chosen in
~$ B_+^\eta$. 
\end{lemma}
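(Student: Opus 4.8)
The plan is to reduce the computation of $R_e(\eta)=\rho(T_{\kk\eta})$ to the largest eigenvalue of a \emph{self-adjoint} operator on $L^2$, and then to apply the Rayleigh–Ritz variational principle. Introduce the symmetrized kernel $\sqrt{\eta}\,\kk\,\sqrt{\eta}$ (in the notation of \eqref{eq:def-fkg}, with $\sqrt\eta\in\Delta$ bounded), whose associated operator $T_{\sqrt\eta\,\kk\,\sqrt\eta}$ is self-adjoint on $L^2$ since $\kk$ is symmetric; moreover its kernel is non-negative, so it is positivity preserving, and it is compact with finite double norm because $\sqrt\eta\le1$. The first step is to show $R_e(\eta)=\rho(T_{\sqrt\eta\,\kk\,\sqrt\eta})$. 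For this I would use the intertwining identity $T_{\sqrt\eta\,\kk\,\sqrt\eta}(\sqrt\eta\,\phi)=\sqrt\eta\,T_{\kk\eta}(\phi)$, valid for all $\phi\in L^2$, which pairs the eigenspaces of $T_{\kk\eta}$ and $T_{\sqrt\eta\,\kk\,\sqrt\eta}$ at any common \emph{non-zero} eigenvalue $\lambda$: if $T_{\kk\eta}\phi=\lambda\phi$ then $u:=\sqrt\eta\,\phi$ is a non-zero eigenfunction of $T_{\sqrt\eta\,\kk\,\sqrt\eta}$ at $\lambda$ (non-zero, since $\sqrt\eta\,\phi\equiv0$ would force $\lambda\phi=T_{\kk\eta}\phi=0$), while conversely $\phi:=\lambda^{-1}T_{\kk}(\sqrt\eta\,u)$ inverts the correspondence. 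Since by Krein–Rutman the spectral radius of each of these positivity-preserving compact operators is a non-negative eigenvalue, matching their positive eigenvalues gives $\rho(T_{\kk\eta})=\rho(T_{\sqrt\eta\,\kk\,\sqrt\eta})$ (both equal to $0$ if neither has a positive eigenvalue).

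The second step is the variational formula for the self-adjoint compact operator $A:=T_{\sqrt\eta\,\kk\,\sqrt\eta}$. Its norm equals $\max(\lambda_{\max},|\lambda_{\min}|)$, while Krein–Rutman attaches a non-negative eigenfunction to the non-negative eigenvalue $\rho(A)=\|A\|$; comparing forces $\rho(A)=\lambda_{\max}$. The Rayleigh principle then yields $\rho(A)=\sup_{\|u\|_2=1}\langle Au,u\rangle$, the supremum being attained at the (non-negative) top eigenfunction, so that one may restrict to $u\in L^2_+$ with $\|u\|_2=1$. Writing the quadratic form explicitly,
\[
  \langle Au,u\rangle=\int_{\Omega\times\Omega}\sqrt{\eta(x)}\,\kk(x,y)\,\sqrt{\eta(y)}\,u(x)u(y)\,\mu(\mathrm{d}x)\mu(\mathrm{d}y).
\]

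Finally I would substitute $u=\sqrt{\eta}\,h$. For $u\in L^2_+$ vanishing on $\{\eta=0\}$ this is a bijection onto $h\in L^2_+$ (the values of $h$ on $\{\eta=0\}$ being irrelevant), under which the constraint becomes $\int h^2\eta\,\mathrm{d}\mu=\|u\|_2^2=1$, i.e.\ $h\in B^\eta_+$, and the quadratic form above becomes exactly the integrand of \eqref{eq:Re-var}. Since the maximizing eigenfunction $u_\star$ of $A$ does vanish on $\{\eta=0\}$ (directly from its eigen-equation) and corresponds, via the first step, to $h_\star=\eta^{-1/2}u_\star=\phi_\star$, the right Perron eigenfunction of $T_{\kk\eta}$ normalized in $B^\eta_+$, restricting the supremum to such $u$ loses nothing, and \eqref{eq:Re-var} follows together with the stated maximizer. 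The one delicate point is the first step: because $\sqrt\eta$ may have zero essential infimum, the two operators are \emph{not} boundedly similar, so the spectral-radius identity must be established through the explicit eigenfunction correspondence rather than a similarity transform — this is where I expect the main care to be required.
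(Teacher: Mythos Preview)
Your argument is correct, and it takes a genuinely different (though closely related) route from the paper's. The paper changes the \emph{measure}: it views $T_{\kk\eta}$ as an operator $\mathcal{T}_{\kk\eta}$ on the weighted space $L^2(\eta\,\mathrm{d}\mu)$, where it becomes self-adjoint, applies Krein--Rutman together with the Courant--Fischer--Weyl principle there, and then invokes an external lemma (\cite[Lemma~3.2(iii)]{ddz-theo}) to identify the two spectral radii via the dense continuous embedding $L^2(\mu)\hookrightarrow L^2(\eta\,\mathrm{d}\mu)$. You instead change the \emph{kernel}: you stay in $L^2(\mu)$ throughout, conjugate to the self-adjoint operator $T_{\sqrt\eta\,\kk\,\sqrt\eta}$, and handle the spectral-radius identification by an explicit pairing of non-zero eigenvalues via the intertwining $T_{\sqrt\eta\,\kk\,\sqrt\eta}(\sqrt\eta\,\phi)=\sqrt\eta\,T_{\kk\eta}\phi$. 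The two pictures are unitarily equivalent (the map $u\mapsto u/\sqrt\eta$ is an isometry from $\{u\in L^2(\mu):u=0$ on $\{\eta=0\}\}$ onto $L^2(\eta\,\mathrm{d}\mu)$), but your version has the advantage of being fully self-contained, whereas the paper's leans on a result proved elsewhere; conversely, the paper's formulation makes the variational principle a one-line consequence of Courant--Fischer--Weyl without the substitution step.

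One small inaccuracy worth noting: your claim that $u\mapsto h=u/\sqrt\eta$ is a bijection from $\{u\in L^2_+:\ u=0$ on $\{\eta=0\}\}$ onto $L^2_+$ is not literally true, since $u/\sqrt\eta$ need not lie in $L^2(\mu)$. However, your proof does not actually use this: you only need that (i) for each $h\in B_+^\eta$ the function $u=\sqrt\eta\,h$ satisfies $\|u\|_2=1$ and $\langle Au,u\rangle$ equals the integrand of~\eqref{eq:Re-var}, and (ii) the maximizer $u_\star$ arises from some $h_\star\in B_+^\eta$, namely the Perron eigenfunction $\phi_\star$ of $T_{\kk\eta}$, which is already in $L^2(\mu)$ by construction. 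Both points you establish correctly, so the conclusion stands.
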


\begin{proof}
  For a finite measure~$\nu$ on~$(\Omega, \cf)$, as usual, we denote
  by~$L^2(\nu)$ the set of measurable real-valued functions~$f$ such
  that~$\int_\Omega f^2 \mathrm{d} \nu<+\infty$ endowed with the
  usual scalar product, so that~$L^2(\nu)$ is an Hilbert space.
  Let~$\eta\in \Delta$. We denote by~$\mathcal{T}_{\kk \eta}$ the
  integral operator associated to the kernel~$\kk \eta$ seen as an
  operator on the Hilbert space~$L^2(\eta \mathrm{d} \mu)$:
  for~$g\in L^2(\eta \mathrm{d} \mu)$ and~$x\in \Omega$ we
  have~$\mathcal{T}_{\kk \eta}(g)(x)= \int_\Omega \kk(x,y)\,
  \eta(y)\, g(y)\,\mu(\mathrm{d}y)$. The operator~$\mathcal{T}_{\kk \eta}$ is
  self-adjoint and compact since the double-norm of~$\kk$
  in~$L^2(\eta \mathrm{d} \mu)$ is finite. It follows from the
  Krein-Rutman theorem and the Courant–Fischer–Weyl min-max principle
  that its spectral radius is given by the variational formula:
  \[
\rho(\mathcal{T}_{\kk \eta})= \sup_{h\in B^\eta_+} \, \int_{\Omega \times \Omega}
  h(x) \, \kk(x,y) \, h(y) \, \eta(x)\mu(\mathrm{d}x) \, 
   \eta(y)\mu(\mathrm{d}y).
\]
Besides, the set~$L^2(\mu)$ is
  densely and continuously embedded in~$L^2( \eta \mathrm{d} \mu)$ and the
  restriction of~$\mathcal{T}_{\kk \eta}$ to~$L^2(\mu)$ is equal to~$T_{\kk
  \eta}$. Thanks to
  \cite[Lemma~3.2~(iii)]{ddz-theo}, we deduce that~$\rho(T_{\kk \eta})$ is equal to~$\rho(\mathcal{T}_{\kk \eta})$, which gives~\eqref{eq:Re-var}.

Let~$h_0$ be the right Perron eigenfunction of~$T_{\kk \eta}$ chosen
such that~$h_0\in B_+^\eta$. We get:
\[
  \int_{\Omega\times \Omega} \eta(x)h_0(x)\, \kk(x,y)\, \eta(y) h_0(y)\,
  \mu(\mathrm{d}x)\mu(\mathrm{d}y)= R_e(\eta)
  \int_{\Omega} \eta(x)h_0(x)^2\,
  \mu(\mathrm{d}x)=R_e(\eta).
\]
  Thus, the supremum in \eqref{eq:Re-var} is reached for~$h=h_0$. 
\end{proof}

\subsubsection{The kernel~$\kk^-$}\label{sec:k-}

Since~$\alpha$ is increasing, we have~$\mu(\alpha^2 = R_0) = 0$ and
thus the symmetric kernel~$\kk^-$ is~$\mu^{\otimes 2}$-a.s.\ positive.
It follows from Remark~\ref{lem:k>0-c} that~$\cmar=0$ and~$\cmir=1$,
and the strategy~$\un$ (resp.~$\zero$) is the only Pareto optimal as
well as the only anti-Pareto optimal strategy with cost~$c=0$
(resp.~$c=1$). Since the kernel~$\kk^-$ is \regular{} and symmetric,
and the non-zero eigenvalues of~$T_{\kk^-}$ are given by~$R_0=1$
and~$-\int \alpha^2\, \mathrm{d}\mu$, the latter being negative, we
deduce from Corollary~\ref{cor:regular}~\ref{cor:regular-conc}
that~$\cpu\subset \cpa$. On the one hand, if~$\eta$ is anti-Pareto
optimal with the same cost as $\etau\in \cpu$, one can use that~$R_e^-(\eta)=\int \eta \, \mathrm{d}\mu$
(as~$R_e^-(\etau)=\int \etau \, \mathrm{d}\mu$)
and~\eqref{eq:re-explicit} to deduce that~$\eta\in \orthStrats$. On
the other hand, if~$\eta$ belongs to~$\orthStrats$, we deduce
from~\eqref{eq:re-explicit}
that~$R_e(\eta)=\int \eta \, \mathrm{d}\mu$, and thus~$\eta$ is
anti-Pareto optimal. In conclusion, we get~$\cpa= \orthStrats$.
\medskip

  We now study the Pareto optimal strategies. We first introduce a notation inspired
  by the stochastic order of real valued random variables: we say
 that~$\eta_1, \eta_2 \in
\Delta$ with the same cost are in \emph{stochastic order}, and we write
$\eta_1 \leq_{\mathrm{st}} \eta_2$ if: 
\begin{equation}
   \label{eq:ineg-stoch}
\int _0^t \eta_1 \, \mathrm{d} \mu 
  \geq \int _0^t \eta_2 \, \mathrm{d} \mu 
  \quad\text{for all}\quad
  t\in [0, 1].
\end{equation}
We also write~$\eta_1 <_{\mathrm{st}} \eta_2$ if the inequality
in~\eqref{eq:ineg-stoch} is strict for at least one~$t\in (0, 1)$. If
$\eta_1 <_{\mathrm{st}} \eta_2$ and~$h$ is an increasing bounded
function defined on~$[0, 1)$, then we have:
\begin{equation}
   \label{eq:monot-stoch}
  \int _\Omega h\, \eta_1 \, \mathrm{d} \mu 
  < \int _\Omega h\, \eta_2 \, \mathrm{d} \mu .
\end{equation}
 Let~$c\in (0, 1)$ be fixed. Define the vaccination strategies with
 cost~$c$:
 \begin{equation}
   \label{eq:def-eta-gd}
\etag =\ind{[0,1-c)}
\quad\text{and}\quad
\etad = \ind{[c,1)}.
 \end{equation}
 In particular we have~$\etag <_{\mathrm{st}}\etad$ as~$\mu$ has no atom
 and~$\Omega$ as full support. Let~$\eta\notin\{\etag,\etad\}$
 be a vaccination strategy with cost~$c$; necessarily 
\[
  \etag <_{\mathrm{st}}\eta<_{\mathrm{st}}\etad.
\]

We now rewrite the function~$R_e^-$ in order to use the stochastic order on the
vaccination strategies.  We deduce from~\eqref{eq:re-explicit} that:
\begin{equation}\label{eq:re-explicit-}
  4 R_e^-(\eta)=4 \int \eta\, \mathrm{d} \mu - H(\eta)^2
  \quad\text{with}\quad
  H(\eta)= \sqrt{\int (1+\alpha)^2\eta\, \mathrm{d} \mu} -
  \sqrt{\int (1-\alpha)^2\eta\, \mathrm{d} \mu} .
\end{equation}
Then, using that~$\alpha$ is increasing and~$[-1,1]$-valued, we deduce
from~\eqref{eq:monot-stoch} (with~$h=(1+\alpha)^2$ and $h=-(1-\alpha)^2$) and the
definition of~$H$ in~\eqref{eq:re-explicit-} that:
\[
  H(\etag) < H(\eta) < H(\etad).
\]
This readily implies that 
$  R_e^-(\eta)> \min \left( R_e^-(\etag), R_e^-(\etad)\right)
$. Thus, among strategies of cost~$c$, the only possible Pareto optimal
ones are~$\etag$ and~$\etad$. We deduce that 
$ \cp\subset \leftStrats\cup\rightStrats$.

\subsubsection{The kernel~$\kk^+$}\label{sec:k+}

Arguing as for~$\kk^-$, we get that~$\cmar=0$ and~$\cmir=1$, and the strategy~$\un$
(resp.~$\zero$) is the only Pareto optimal as well as the only anti-Pareto optimal
strategy with cost~$c=0$ (resp.~$c=1$). Since the kernel~$\kk^+$ is \regular{} and
symmetric, and the non-zero eigenvalues of~$T_{\kk^+}$ given by~$R_0$ and~$\int_\Omega
\alpha^2\, \mathrm{d}\mu$ are positive, we deduce from
Corollary~\ref{cor:regular}~\ref{cor:regular-conv} that ~$\cpu\subset \cp$.

\medskip

Arguing as in Section~\ref{sec:k-} for the identification of the anti-Pareto optima based
on~\eqref{eq:re-explicit} (with~$\varepsilon=+$ instead of~$\varepsilon=-$) and using
that~$\cpu\subset \cp$ (instead of~$\cpu\subset\cpa$), we deduce that~$\cp=
\orthStrats $.

\medskip

We now consider the anti-Pareto optima. Let~$c\in (0, 1)$. We first start with some
comparison of integrals with respect to the vaccination strategies, with cost~$c$,~$\etag$
and~$\etad$ defined by \eqref{eq:def-eta-gd}. Let~$\eta$ be a strategy of cost~$c$ not
equal to~$\etag$ or~$\etad$ (recall that a strategy is defined up to the a.s.\ equality).
Consider the monotone continuous non-negative functions defined on~$[0,1]$:

\begin{equation*}
  \fg \, \colon \, x \mapsto \varphi^{-1} \left(\int_{[0,x)}
  \eta \, \mathrm{d} \mu\right),
  \quad \text{and}\quad
  \fd \, \colon \, x \mapsto \varphi^{-1} \left(1 - \int_{[x,1)} \eta
  \, \mathrm{d}\mu\right). 
\end{equation*}
 
Let~$i\in \{0,1\}$. Let~$\phi_i^{-1}$ denote the generalized left-continuous inverse
of~$\phi_i$. Notice that~$\eta(x)\, \mu(\mathrm{d} x)$-a.s.~$\phi^{-1}_i \circ
\phi_i(x)=x$.  The measure~$\eta_i\, \mathrm{d} \mu$ is the push-forward of~$\eta\,
\mathrm{d} \mu$ through~$\phi_i$, so that for~$h$ bounded measurable:
\begin{equation}
  \label{eq:push-forward4}
  \int h \, \eta\, \mathrm{d} \mu
  = \int h_i\, \eta_i\, \mathrm{d} \mu
  \quad\text{with}\quad
  h_i=h\circ \phi_i^{-1}.
\end{equation} 

Since~$\eta$ is not equal to~$\etag$ a.s., there exists~$x_0 < 1 - c$ such that,~$\fg(x) =
x$ for~$x \in [0,x_0]$ and~$\fg(x) < x$ for~$x \in (x_0, 1]$. Thus, we deduce
that~$\fg^{-1}(y) = y$ for all~$y \in [0, x_0]$ and~$\fg^{-1}(y)>y$ for all~$y \in (x_0,
1-c]$. Similarly, since~$\eta$ is not equal to~$\etad$ almost surely, there exists~$x_1 >
c$ such that~$\fd^{-1}(y) = y$ for all~$y \in (x_1,1]$ and~$\fd^{-1}(y) < y$ for all~$y
\in [c, x_1)$. Since~$\alpha$ is increasing and~$\mu$ has no atom and full support
in~$\Omega$, we deduce from from~\eqref{eq:push-forward4}, applied to~$h\alpha$, that
if~$h$ is positive a.s., then:
\begin{equation}\label{eq:comparison}
  \int \hg \, \alpha\, \etag\, \mathrm{d} \mu
  <
  \int h\, \alpha\, \eta\, \mathrm{d} \mu
  <
  \int \hd \, \alpha\, \etad\, \mathrm{d}\mu.
\end{equation}

\medskip

Let~$h$ be the right Perron eigenfunction of~$T_{\kk^+ \eta}$ chosen such that~$h\in
B_+^\eta$. Since~$\kk^+$ is positive a.s.\ and thus, irreducible, we have that~$h$ is
positive a.s. Thanks to Lemma~\ref{lem:2formula}, we have:
\begin{equation}\label{eq:eigenvaria}
  R_e^+(\eta) = \left( \int h\, \eta \,
  \mathrm{d} \mu \right)^2
  + \left( \int h\, \alpha\, \eta \, \mathrm{d} \mu \right)^2
  \quad\text{and}\quad
  \int h^2 \, \eta \, \mathrm{d} \mu =1 .
\end{equation}
We deduce from~\eqref{eq:push-forward4} that for~$i\in \{0,1\}$:
\[
  \int h\, \eta \, \mathrm{d} \mu =\int h_i\, \eta_i\, \mathrm{d} \mu
  \quad\text{and}\quad
  1=\int h^2\, \eta \, \mathrm{d} \mu =\int h_i^2\, \eta_i\,
  \mathrm{d} \mu.
\]
In particular~$h_i$ belongs to~$B_+^{\eta_i}$. Using that a.s.~$h>0$, we then deduce
from~\eqref{eq:eigenvaria} and~\eqref{eq:comparison} that:
\[
  R_e^+(\eta) < \max_{i\in \{0,1\}} 
  \left( \int h_i\, \eta_i \,
  \mathrm{d} \mu \right)^2
  + \left( \int h_i\, \alpha \, \eta_i\, \mathrm{d} \mu
  \right)^2
  \leq \max_{i\in \{0,1\}} R_e(\eta_i). 
\]
We conclude that only~$\etag$ or~$\etad$ can maximize~$R_e^+$ among the strategies of
cost~$c\in (0,1)$. We deduce that~$ \cpa\subset \leftStrats\cup\rightStrats$.

\subsection{An example where all parametrizations of the Pareto
  frontier have an infinite number of discontinuities}
\label{ssec:infinite}

The purpose of this section is to give a particular example of kernel on a continuous model where we
rigorously prove that the Pareto frontier cannot be greedily
parametrized, that is, parametrized by a continuous path in~$\Delta$ (as in the fully
symmetric circle), and that all the parametrizations have an arbitrary large
number of discontinuities (possibly countably infinite).
\medskip

We keep the setting from Section~\ref{sec:pp-f-rk2}. Without loss of
generality, we assume that~$R_0 = 1$, and we consider the kernel
$\kk^-=1- \alpha\otimes \alpha$ on~$\Omega=[0, 1)$ endowed with its
Lebesgue measure. We know from the previous section that, for any cost,
either~$\etag$ or~$\etad$ are Pareto optimal, and that all other
strategies are non-optimal. The idea is then to build an instance of the
function~$\alpha$ in
such a way that for some costs, one must vaccinate ``on the left'' and
for other costs ``on the right''. \medskip

Let~$N \in \lb 2, + \infty \rb$.
Consider an increasing sequence~$(x_n, \, n\in \lb 0, N\rb )$ such that
$x_0 = 1/2$,~$x_N = 1$ and~$\lim_{n\rightarrow \infty } x_n = 1$ if~$N = \infty$. For~$0
\leq n < N$, let~$p_n = x_{n+1} - x_n$ and assume that~$p_{n+1} < p_n$
for~$n\in \lb 0, N\lb$. For~$n
\geq 1$, let~$x_{-n}$ be the symmetric of~$x_n$ with respect to~$1/2$, \textit{i.e.},~$x_{-n} = 1 -
x_n$. The function~$\alpha$ is increasing piecewise linear defined on~$(0, 1)$ by:
\begin{equation}\label{eq:infinite-alpha}
  \alpha(x) = 
  \begin{cases}
    2x - 1, &\quad \text{for} \; x \in [x_{2m}, x_{2m + 1}), \\
    \\
    x - 1 + \frac{x_{2m-1} + x_{2m}}{2} &\quad \text{for} \; x \in [x_{2m-1}, x_{2m}). \\
  \end{cases}
\end{equation}
See Figure~\ref{fig:alpha} for an instance of the
graph of~$\alpha$ given in Example~\ref{ex:alpha-exple}. 
Note that for all~$n\in \lb 0, N\lb$, we have:
\begin{equation}\label{eq:int-equalities}
  \int_{x_{n}}^{x_{n + 1}} \alpha \, \mathrm{d} \mu =
 -\int_{x_{-n-1}}^{x_{-n}} \alpha \, \mathrm{d} \mu.
\end{equation}
This proves that the integral of~$\alpha$ over~$[0,1)$ is equal to~$0$. Of course,
$\sup_{[0,1)} \alpha^2 = 1 = R_0$. Hence,~$\alpha$ satisfies
Condition~\eqref{eq:alpha-cond}.

\begin{figure}
  \begin{subfigure}[T]{.5\textwidth}
    \centering
    \includegraphics[page=2]{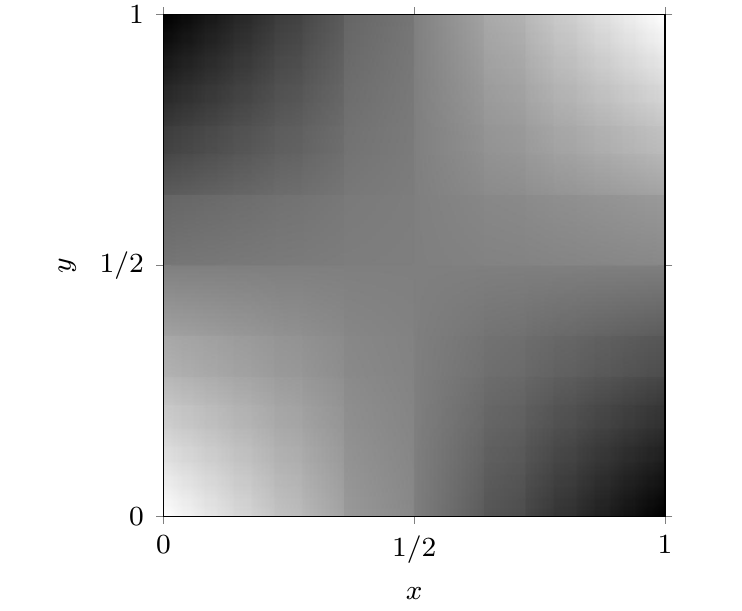}
    \caption{Graph of the function~$\alpha$ defined by
    Equation~\eqref{eq:infinite-alpha} and
    Example~\ref{ex:alpha-exple}.}
  \label{fig:alpha} 
  \end{subfigure}%
  \begin{subfigure}[T]{.5\textwidth}
    \centering
    \includegraphics[page=3]{disconnected}
    \caption{Graph of the corresponding function~$\delta$ defined in Equation~\eqref{eq:delta}.}
    \label{fig:delta}
  \end{subfigure}%
  \caption{Plots of the functions of interest in Section~\ref{ssec:infinite}.}
  \label{fig:functions}
\end{figure}

We recall that a function~$\gamma: [0, \cmir] \mapsto \Delta$ is a
parametrization of the Pareto frontier if for all~$c\in
[0, \cmir]$ the strategy~$\gamma(c)$ is Pareto optimal with cost
$C(\gamma(c))=c$. Now we can prove there exists no greedy
parametrization of the Pareto frontier of the kernel~$\kk^-$ and even
impose an arbitrary large lower bound for the number of discontinuities. 
  
\begin{proposition}\label{prop:infinite2}
  Let~$N \in \lb 2, + \infty \rb$. Consider the kernel~$\kk^- = 1 - \alpha\otimes \alpha$ from \eqref{eq:k+} on~$\Omega=[0, 1)$ endowed with its Lebesgue measure, with~$\alpha$
  given by~\eqref{eq:infinite-alpha}. Then, any parametrization of
  the Pareto frontier has at least~$2N-2$ and at most~$20N-2$
  discontinuities.
\end{proposition}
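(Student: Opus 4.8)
The plan is to reduce the counting of discontinuities to the sign analysis of a single scalar function and then to estimate its sign changes and its zeros separately. By Proposition~\ref{prop:rank2}\,(ii) applied to $\kk^-$, every Pareto optimal strategy of cost $c\in(0,1)$ is one of the two candidates $\etag=\ind{[0,1-c)}$ and $\etad=\ind{[c,1)}$ of \eqref{eq:def-eta-gd}. I therefore set $\delta(c) = R_e^-(\etad) - R_e^-(\etag)$ (the function $\delta$ of Equation~\eqref{eq:delta}), which by the explicit formula \eqref{eq:re-explicit}--\eqref{eq:re-explicit-} is a continuous function of $c$ on $[0,1]$. Its sign dictates the forced choice: if $\delta(c)<0$ then $\etad$ is the unique Pareto optimal strategy of cost $c$, if $\delta(c)>0$ then $\etag$ is, and if $\delta(c)=0$ both are optimal. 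Since $\kk^-$ is a.s.\ positive (as $\alpha$ is strictly increasing with $\sup\alpha^2=1$, so $\alpha(x)\alpha(y)<1$ a.e.), Lemma~\ref{lem:k>0-c} gives $\cmir=1$, so a parametrization is a map $\gamma\colon[0,1]\to\Delta$ with $\gamma(c)$ Pareto optimal of cost $c$.

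The next step is to show that \emph{discontinuities of any parametrization occur exactly at zeros of $\delta$, and are forced at each sign change}. The two families $c\mapsto\etag(c)$ and $c\mapsto\etad(c)$ are continuous from $[0,1]$ into $\Delta$ (indeed $\norm{\etag(c)-\etag(c')}_2^2=|c-c'|$, and likewise for $\etad$), while for $c\in(0,1)$ they are distinct, with $\norm{\etag(c)-\etad(c)}_2^2=2\min(c,1-c)>0$. Consequently, at any $c^\star$ with $\delta(c^\star)\neq0$, continuity of $\delta$ makes one family uniquely optimal on a whole neighbourhood, so $\gamma$ coincides there with that continuous family and is continuous at $c^\star$; hence the discontinuity set is contained in $\{\delta=0\}$, giving the \emph{upper bound} on the number of discontinuities of every $\gamma$. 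Conversely, at a point $c^\star$ where $\delta$ changes sign, the one-sided limits of $\gamma$ are the distinct values $\etag(c^\star)$ and $\etad(c^\star)$, so $\gamma$ must jump there; this yields the \emph{lower bound}. It thus remains to prove that $\delta$ has at least $2N-2$ sign changes and at most $20N-2$ zeros.

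For the lower bound I would evaluate $\delta$ at the critical costs $c=x_{-n}=1-x_n$, $1\le n\le N-1$, together with $c=x_m$, $1\le m\le N-1$, and $c=x_0=1/2$: these are exactly the $2N-1$ breakpoints of $\alpha$ lying in $(0,1)$, and at each of them \emph{both} windows $\ag=[0,1-c)$ and $\ad=[c,1)$ are unions of full linearity intervals of $\alpha$, so the moments $\int_{\ag}\alpha,\int_{\ag}\alpha^2$ and $\int_{\ad}\alpha,\int_{\ad}\alpha^2$ entering \eqref{eq:re-explicit} are finite sums computable from the $p_n$ and the slopes. The key point is that the slopes of $\alpha$ alternate between $2$ on $[x_{2m},x_{2m+1})$ and $1$ on $[x_{2m-1},x_{2m})$, which \emph{breaks} the reflection antisymmetry: were $\alpha$ antisymmetric about $1/2$ one would get $\delta\equiv0$, whereas here the mismatch makes $\mathrm{sign}\,\delta$ alternate from one breakpoint to the next. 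Granting this alternation, the intermediate value theorem forces a zero of $\delta$ in each of the $2N-2$ gaps between consecutive breakpoints, hence at least $2N-2$ sign changes. I expect \textbf{this sign computation to be the main obstacle}: the radical in \eqref{eq:re-explicit-} obstructs a clean closed form, so one must either factor $\delta$ at the aligned costs or compare $H(\etag)^2$ and $H(\etad)^2$ piece by piece, carefully tracking the slope‑$2$ versus slope‑$1$ contributions.

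For the upper bound I would argue that on each of the $\le 2N$ intervals delimited by the breakpoints (now regarded as a range for $c$), $\alpha$ is affine, so $\int_{\ag}\alpha^2,\int_{\ad}\alpha^2$ are cubic polynomials in $c$ and $\int_{\ag}\alpha,\int_{\ad}\alpha$ are quadratic; by \eqref{eq:re-explicit} each of $R_e^-(\etag)$, $R_e^-(\etad)$ is a cubic plus the square root of a polynomial of degree at most $6$, so $\delta$ is algebraic of bounded complexity on each piece. Clearing the two radicals in the equation $\delta(c)=0$ by squaring twice produces a polynomial of bounded degree, yielding at most a fixed number (at most ten) of zeros per piece, and one checks $\delta\not\equiv0$ on any piece (otherwise the alternation above would fail). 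Summing over the $\sim 2N$ pieces bounds the number of zeros of $\delta$, hence the number of discontinuities of any parametrization, by $20N-2$. Combining the two estimates through the dictionary of the second paragraph completes the proof; letting $N=\infty$ recovers the countably infinite lower bound announced for the section.
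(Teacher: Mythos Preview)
Your outline is essentially the paper's proof: reduce to the scalar function~$\delta$, bound discontinuities from below by its sign changes and from above by its zeros, and handle the latter by clearing radicals to obtain a polynomial of degree~$10$ on each of the~$2N$ linearity pieces (your double squaring is exactly how the paper's polynomial~$P=4(V_d-V_g)(V_gM_d^2-V_dM_g^2)-(M_g^2-M_d^2)^2$ arises, and the leading~$V_g^2V_d^2$ terms cancel to give degree~$10$, not~$12$). The~``$-2$'' then comes from discarding the endpoint zeros~$\delta(0)=\delta(1)=0$.

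The step you rightly flag as the obstacle is the sign alternation at the breakpoints, and here the paper has a clean trick that avoids brute-force comparison of the radicals. Writing~$M_g(t)=2\int_0^t\alpha$, $M_d(t)=2\int_{1-t}^1\alpha$ and~$V_i(t)=t+\int\alpha^2$ over the corresponding window, the antisymmetry identity~\eqref{eq:int-equalities} gives~$M_g(x_n)=M_d(x_n)$ at every breakpoint. Since~$2\delta=(V_d-V_g)+\sqrt{V_g^2-M_g^2}-\sqrt{V_d^2-M_d^2}$ and~$x\mapsto x-\sqrt{x^2-b^2}$ is decreasing, the sign of~$\delta(x_n)$ is exactly the sign of~$V_d(x_n)-V_g(x_n)$. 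This second-moment difference telescopes to~$\tfrac14\sum_{i\ge|n|}(-1)^ip_i^3$, which alternates in sign with the parity of~$n$ because the~$p_i$ are strictly decreasing. That is the entire missing computation; once you have it, the intermediate value theorem gives the~$2N-2$ sign changes exactly as you describe.
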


The proof is given at the end of this section, and relies on the following
technical lemma based on the comparison of the following monotone paths
$\gag$ and~$\gad$ from~$[0, 1]$ to~$\Delta$:
\begin{equation}
  \gag(t) = \ind{[0,t)}, \quad \text{and} \quad
  \gad(t) = \ind{[1-t,1)}, \quad t \in [0,1]
\end{equation}
which parameterizes~$\leftStrats$ and~$\rightStrats$ as
$\gag([0, 1])=\leftStrats$ and~$\gad([0, 1])=\rightStrats$. Notice
that strategies~$\gag(t)$ and~$\gad(t)$ have the same cost~$1-t$. 

Consider the function~$\delta:[0, 1] \rightarrow \R$ which, according
to Proposition~\ref{prop:rank2}, measures the
difference between the effective reproduction numbers at the extreme
strategies:
\begin{equation}\label{eq:delta}
  \delta(t) = R_e(\gag(t)) - R_e(\gad(t)).
\end{equation}
The function~$\delta$ is continuous and~$\delta(0) = \delta(1) = 0$; see
for example Figure~\ref{fig:delta}  for  its  graph  when  $\alpha$  is  taken  from
Example~\ref{ex:alpha-exple}.  We  say  that~$t\in  (0, 1)$  is  a  zero
crossing of~$\delta$  if~$\delta(t)=0$ and there  exists $\varepsilon>0$
such  that~$\delta(t+r)\delta(t-r)<0$ for  all~$r\in (0,  \varepsilon)$.
The  following  result  gives  some  information on  the  zeros  of  the
function~$\delta$.

\begin{lemma}\label{lem:infinite}
  Under the assumptions of Proposition~\ref{prop:infinite2}, the
  function~$\delta$ defined in~\eqref{eq:delta} has at least~$2N -2$
  zero-crossings in~$(0,1)$ and at most~$20N$ zeros in~$[0, 1]$. Besides,
  if~$N = \infty$,~$0$ and~$1$ are the only accumulation points of the
  set of zeros of~$\delta$.
\end{lemma}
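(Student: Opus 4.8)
The plan is to reduce the sign of $\delta$ to an explicit scalar comparison, and then to count sign changes (lower bound) and total zeros (upper bound) by exploiting the piecewise–linear structure of $\alpha$.

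First I would use the reformulation \eqref{eq:re-explicit-}. Since $\gag(t)=\ind{[0,t)}$ and $\gad(t)=\ind{[1-t,1)}$ both have $\int\eta\,\mathrm{d}\mu=t$, that identity gives $4R_e^-(\gag(t))=4t-H(\gag(t))^2$ and likewise for $\gad(t)$, so by \eqref{eq:delta} we get $4\delta(t)=H(\gad(t))^2-H(\gag(t))^2$. Setting $G(t)=\int_0^t\alpha\,\mathrm{d}\mu$ and factoring $H(\eta)=4\big(\int\alpha\eta\,\mathrm{d}\mu\big)\big/\big(\sqrt{\int(1+\alpha)^2\eta\,\mathrm{d}\mu}+\sqrt{\int(1-\alpha)^2\eta\,\mathrm{d}\mu}\big)$, one reads off that $H(\gag(t))$ has the sign of $G(t)$ and $H(\gad(t))$ that of $-G(1-t)$. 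Because $\alpha$ is increasing with $\int_\Omega\alpha\,\mathrm{d}\mu=0$ and $\alpha(1/2)=0$, the primitive $G$ satisfies $G(0)=G(1)=0$ and $G<0$ on $(0,1)$; hence $H(\gag(t))<0<H(\gad(t))$ on $(0,1)$ and therefore $\sgn\delta(t)=\sgn\big(H(\gad(t))+H(\gag(t))\big)$, so the zeros of $\delta$ in $(0,1)$ are exactly the solutions of $G(1-t)\,D_L(t)=G(t)\,D_R(t)$, where $D_L$ (resp.\ $D_R$) is the positive denominator in the factorization of $H(\gag(t))$ (resp.\ $H(\gad(t))$). This equation holds identically when $\alpha$ is antisymmetric about $1/2$; the whole point is that \eqref{eq:infinite-alpha} is engineered to break that symmetry block by block.

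For the lower bound I would produce $2N-1$ test points $0<t_1<\cdots<t_{2N-1}<1$, built from the breakpoints $x_n$ and their reflections $x_{-n}=1-x_n$, at which $H(\gad)+H(\gag)$ takes strictly alternating signs; continuity of $\delta$ (recalled just before the lemma) together with the intermediate value theorem then yields a genuine zero-crossing strictly between each consecutive pair, i.e.\ at least $2N-2$ zero-crossings in $(0,1)$. The alternation is the analytic trace of the construction: on a slope-$2$ block $\alpha$ is steeper than on the adjacent slope-$1$ block, and the widths $p_n$ strictly decrease, so as $t$ sweeps through one block the magnitude $|H(\gad(t))|$ overtakes $|H(\gag(t))|$ and is overtaken again in the next, the sign at each $t_k$ being read off from the integral identities \eqref{eq:int-equalities} and the strict monotonicity of $\alpha$.

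For the upper bound I would localize to the intervals on which both $\alpha(\,\cdot\,)$ and $\alpha(1-\,\cdot\,)$ are linear; since the breakpoints of $t\mapsto\int_0^t$ and of $t\mapsto\int_{1-t}^1$ all occur at the $x_n$ (using $x_{-n}=1-x_n$), there are at most $2N$ such pieces. On each piece $G(t)$ and $G(1-t)$ are polynomials of degree $\le 2$, and writing $s_L^\pm=\int_0^t(1\pm\alpha)^2\,\mathrm{d}\mu$ and $s_R^\pm=\int_{1-t}^1(1\pm\alpha)^2\,\mathrm{d}\mu$ (polynomials of degree $\le 3$), one has $4\delta=\big(s_R^++s_R^--s_L^+-s_L^-\big)-2\sqrt{s_R^+s_R^-}+2\sqrt{s_L^+s_L^-}$. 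Clearing the two square roots by squaring twice turns $\{\delta=0\}$ into a single polynomial equation of uniformly bounded degree (a book-keeping of the degrees gives at most ten roots per piece), provided the resulting polynomial is not identically zero; ruling this out is precisely where the \emph{strict} increase of $\alpha$ is used, since identical vanishing on a piece would force $R_e^-(\gag)\equiv R_e^-(\gad)$ there and hence a local antisymmetry incompatible with \eqref{eq:infinite-alpha}. Summing at most ten zeros over at most $2N$ pieces yields at most $20N$ zeros in $[0,1]$. I expect this degree count, together with the non-degeneracy check, to be the main obstacle. Finally, for $N=\infty$ the same per-piece analysis shows that any $[\varepsilon,1-\varepsilon]$ meets only finitely many $x_n$ and $x_{-n}$ (as $x_n\uparrow 1$ and $x_{-n}\downarrow 0$), hence finitely many linear pieces and, by the non-degeneracy above, finitely many zeros; so no zero of $\delta$ accumulates inside $(0,1)$ and the accumulation set is contained in $\{0,1\}$. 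Conversely, the infinitely many zero-crossings furnished by the lower-bound step arise one per block, and the blocks cluster at the endpoints, so both $0$ and $1$ are genuine accumulation points, completing the proof.
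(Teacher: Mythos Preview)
Your overall plan matches the paper's: both use the explicit formula for $R_e^-$, test the sign of $\delta$ at the breakpoints $x_n$ for the lower bound, and clear the square roots to reduce to a piecewise polynomial for the upper bound. However, there are two genuine gaps in your execution.

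\textbf{Lower bound.} You assert that $H(\gad)+H(\gag)$ alternates in sign at the breakpoints and justify this only with the heuristic ``slope-$2$ block steeper than slope-$1$, widths decreasing''. That is not a proof, and it is not obvious how to turn it into one in the $H$-formulation. The key computation you are missing is this: from \eqref{eq:int-equalities} one gets $\int_0^{x_n}\alpha\,\mathrm d\mu=\int_0^{x_{-n}}\alpha\,\mathrm d\mu$, i.e.\ $G(x_n)=G(1-x_n)$, so that at each $x_n$ the first-moment contributions of $\gag$ and $\gad$ coincide (in your notation, the two numerators $G(t)$ and $G(1-t)$ in the factored $H$'s are equal). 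The sign of $\delta(x_n)$ then reduces purely to comparing the second moments $\int_0^{x_n}\alpha^2\,\mathrm d\mu$ and $\int_{1-x_n}^1\alpha^2\,\mathrm d\mu$, and the paper computes their difference explicitly as $\tfrac14\sum_{i\ge|n|}(-1)^ip_i^3$, an alternating series with strictly decreasing terms, hence of sign $(-1)^{|n|}$. This is the missing idea; without it your alternation claim floats.

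\textbf{Upper bound.} Your non-degeneracy argument is in the wrong direction. You propose to rule out $P\equiv 0$ on a piece by arguing that it would force $\delta\equiv 0$ there, hence a local antisymmetry of $\alpha$. But the implication goes only from $\delta=0$ to $P=0$; squaring twice introduces spurious roots, so $P\equiv 0$ on an interval does \emph{not} imply $\delta\equiv 0$ there, and your contradiction does not fire. The paper resolves non-degeneracy differently: writing out $M_0,M_1,V_0,V_1$ explicitly on a piece and checking that the leading coefficient of $P=4(V_1-V_0)(V_0M_1^2-V_1M_0^2)-(M_0^2-M_1^2)^2$ is nonzero, so $\deg P=10$ exactly. (Note in passing that a naive degree count before simplification gives $12$; the reduction to $10$ comes from a cancellation of the top terms after the first squaring, so your ``book-keeping gives ten'' needs that cancellation made explicit.)

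Your treatment of the $N=\infty$ accumulation-point claim is fine and coincides with the paper's.
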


\begin{proof}
  Using the explicit representation of~$R_e^-$ from
  Lemma~\ref{lem:2formula}, see~\eqref{eq:re-explicit} with~$\varepsilon=-$, we get the function~$\delta$ can be expressed as:
  \begin{equation}
 2 \delta(t) = \Vd(t) - \Vg(t) + \sqrt{
	\Vg(t) ^2 - \Mg(t)^2} - \sqrt{
   \Vd(t) ^2 - \Md(t)^2} ,
  \end{equation}
  where, as~$\int \alpha \, \mathrm{d}\mu=0$:
  \begin{equation*}
    \Mg(t) = 2 \int_0^t \!\!\alpha \, \mathrm{d} \mu,
    \,\,\,
    \Vg(t) = t+\int_0^t \!\!\alpha^2\, \mathrm{d} \mu,
    \, \,\, 
    \Md(t)=\Mg(1-t)\,\, \,
    \text{and} \quad \Vd(t) = t+ \int_{1-t}^1 \!\!\alpha^2
    \, \mathrm{d} \mu.
  \end{equation*}
Elementary computations give that for all~$n\in \lb 0, N\lb$:
  \begin{equation}
    \int_{x_{n}}^{x_{n + 1}} \alpha(x)^2 \, \mathrm{d}x -
    \int_{x_{-n-1}}^{x_{-n}} \alpha(x)^2 \, \mathrm{d} x =
    \frac{(-1)^n p_n^3}{4},
  \end{equation}
where we recall that~$p_n=x_{n+1} - x_n$. Hence, we obtain that for all~$n \in \lb -N, N \rb$:
  \begin{equation}
\Vd(x_n) - \Vg(x_n) =
    \frac{1}{4} \sum\limits_{i=\abs{n}}^\infty (-1)^i p_i^3.
  \end{equation}
  Since the sequence~$(p_n, \, n \in \lb 0, N \lb)$ is decreasing, we
  deduce that the sign of~$\Vd(x_n) - \Vg(x_n)$ alternates depending
  on the parity of~$n\in \rb -N, N \lb$: it is positive for odd~$n$
  and negative for even~$n$. The same result holds for the numbers~$\delta(x_n)$ since~$\Mg(x_n) =\Md(1-x_n)$ for all~$n \in \lb -N, N \rb$ according to~\eqref{eq:int-equalities} (use
  that, with~$b>0$, the function~$x\mapsto x - \sqrt{x^2 - b^2}$ is
  decreasing for~$x\geq \sqrt{b}$ as its derivative is
  negative). This implies that~$\delta$ has at least~$2N-2$
  zero-crossings in~$(0, 1)$. \medskip

  We now prove that~$\delta$ has at most~$20N$ zeros in~$[0, 1]$ and
  that~$0$ and~$1$ are the only possible accumulation points of the set
  of zeros of~$\delta$. It is enough to prove that~$\delta$ has at most
  10 zeros on~$[x_n, x_{n+1}]$ for all finite~$n\in \lb -N, N \lb$. On
  such an interval~$[x_n, x_{n+1}]$, the function~$\alpha$ is a
  polynomial of degree one. Consider first~$n$ odd and non-negative, so that for~$t\in [x_n, x_{n+1}]$, we get that with~$a=1-(x_n+x_{n+1})/2$:
  \begin{align*}
  \Mg(t)=2t^2 - 2t + b_1, \quad
    & \Vg(t)=\frac{4}{3} t^3 - 2 t^2 + 2t + b_2, \\
\Md(t)=t^2 -2a t + b_3 , \quad
        & \Vd(t)=- \frac{1}{3} t^3 + a t^2 + (1- a^2) t + b_4, 
  \end{align*}
  where~$b_i$ are constants. If~$t$ is a zero of~$\delta$, then it is
  also a zero of the polynomial~$P$ given by: 
  \[
P= 4(\Vd -\Vg)\left(\Vg \Md^2 - \Vd \Mg^2\right)- \left( \Mg^2 -
    \Md^2\right)^2. 
  \]
Since the degree of~$P$ is exactly 10, it has at most 10 zeros. Thus~$\delta$ has at most
10 zeros on~$[x_n, x_{n+1}]$. This ends the proof. 
\end{proof}

\begin{proof}[Proof of Proposition~\ref{prop:infinite2}]
  According to Proposition~\ref{prop:rank2}, the only
  possible Pareto
  strategies of cost~$c=1-t\in [0, 1]$ are $\gag(t)$ and~$\gad(t)$,
  and only one of them is optimal when $\delta\neq 0$. 
   A zero crossing of the function~$\delta$ on~$(0, 1)$ therefore corresponds to a
   discontinuity of any parametrization of the Pareto frontier. We
   deduce from Lemma~\ref{lem:infinite} that in~$(0, 1)$ there are at least~$2N-2$
   and at most~$20N- 2$ zeros crossing and thus
   discontinuities of any parametrization of the Pareto frontier. 
\end{proof}

\begin{figure}
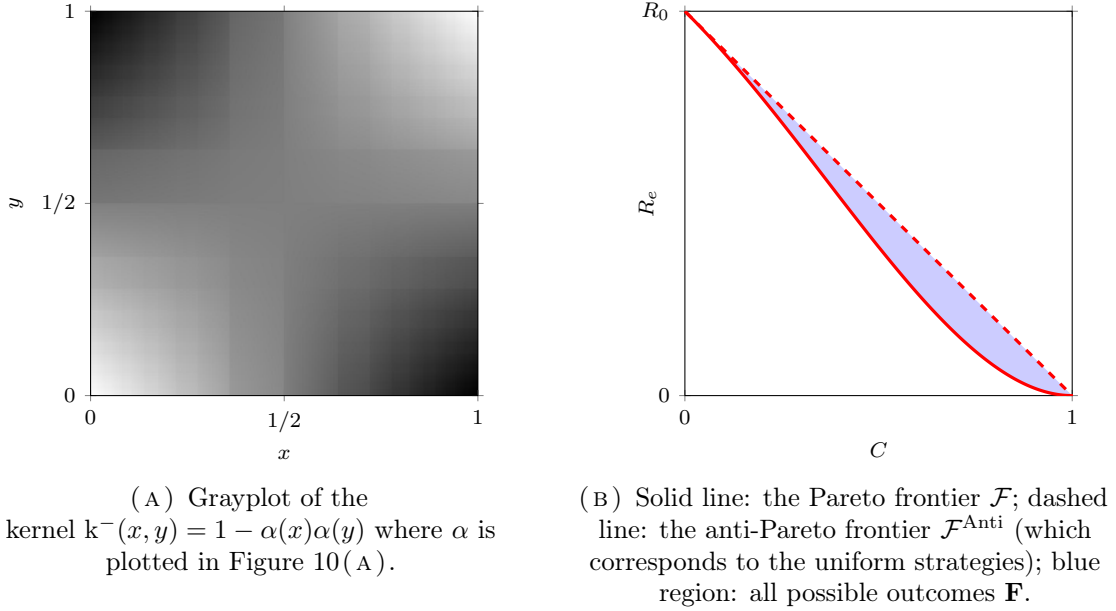

  \begin{subfigure}[T]{.5\textwidth} \centering
    \includegraphics[page=1]{disconnected}
    \caption{Grayplot of the kernel~$\kk^-(x,y) = 1 - \alpha(x) \alpha(y)$ where~$\alpha$ is plotted
   in Figure~\ref{fig:alpha}.}\label{fig:regular_kernel_1} 
  \end{subfigure}%
  \begin{subfigure}[T]{.5\textwidth} \centering
    \includegraphics[page=4]{disconnected}
    \caption{Solid line: the Pareto frontier~$\F$; dashed line: the
      anti-Pareto frontier~$\AF$ (which corresponds to the uniform strategies); blue
    region: all possible outcomes~$\FF$.}
    \label{fig:pareto_frontier_regular_1}
  \end{subfigure}%
  \caption{An example of a \regular{} kernel operator of rank 2.}\label{fig:infinite-component}
\end{figure}

\begin{example}
  \label{ex:alpha-exple}
  In Figure~\ref{fig:alpha}, we have represented the function~$\alpha$ defined by~\eqref{eq:infinite-alpha}
  where:
  \begin{equation*}
    x_n = \frac{1}{2} \log_{12}(12(n+1)), \quad 0 \leq n \leq N=11.
  \end{equation*}
  Hence, the mesh~$(x_n, \, -N \leq n \leq N)$ is composed by~$2N +1 = 23$ points. The
  graph of the corresponding function~$\delta$ defined in~\eqref{eq:delta} is
  drawn in Figure~\ref{fig:delta}. The grayplot of the kernel~$\kk^-=1-
  \alpha \otimes \alpha$ is given in
  Figure~\ref{fig:regular_kernel_1} and the associated Pareto and
  anti-Pareto frontiers are plotted in
  Figure~\ref{fig:pareto_frontier_regular_1}.
\end{example}

\section{Geometric kernels on the sphere}\label{sec:geometric}

A geometric random graph is an undirected graph constructed by assigning a random point in
a latent metric space to each node and by connecting two nodes according to a certain
probability that depends on the distance between their latent point. Because of its
geometric structure, this model is appealing for a wide-range of applications such as
wireless networks modelling \cite{DegreeDistribuHekmatNone}, social networks
\cite{LatentSpaceApHoff2002} and biological networks \cite{FittingAGeomeHigham2008}. A
geometric random graph model can be represented as a symmetric kernel defined on the
latent space (also called \emph{graphon}) according to \cite{lovasz_large_2012}.

In this section, we focus our study on the latent space given by the unit sphere. In 
Section~\ref{sec:sphere} we present the mathematical model, and give in
Section~\ref{sec:sphere-uni} sufficient conditions on the kernel for uniform  strategies
to be Pareto or anti-Pareto optimal. Section~\ref{sec:sphere-aff} is  devoted to the
explicit descriptions of the Pareto and anti-Pareto optimal  vaccination strategies in the
affine case.

\subsection{The model}\label{sec:sphere}

Let $d\geq 2$.  Let~$\Omega = \Sd$ be the unit sphere of the Euclidean~$d$-dimensional
space~$\R^d$ endowed with the usual Borel $\sigma$-field and the uniform probability
measure~$\mu$. Let~$\langle \cdot, \cdot \rangle$ denote the usual scalar product on
$\R^d$ and let
\[
  \delta(x,y) = \arccos(\braket{x,y})
\]
denote  the  geodesic  distance  between~$x,y\in  \Sd$.  By  symmetry,  the distribution 
on~$[-1, 1]$  of  the scalar  product  of two  independent uniformly  distributed  random 
variables   in~$\Sd$  is  equal  to  the distribution of  the first  coordinate of  a
uniformly  distributed unit vector: it  is the probability  measure on  $[-1, 1]$ with 
density with respect       to      the       Lebesgue      measure       proportional to
the function $w_d$ defined on $[-1, 1]$ by:
\[
  w_d(t)=(1 -  t^2)^{(d-3)/2}\, \ind{(-1, 1)}(t).
\]
In  particular, we  deduce from  the  Funk-Heck formula  (take $n=0$  in
\cite[Theorem~1.2.9]{ApproximationTDaiF2013}) that  for any non-negative measurable
function~$h$ defined on~$[-1, 1]$ and $x\in \Sd$, we have:
\begin{equation}\label{eq:h-Sd2}
  \int_{\Sd} h(\langle x,y \rangle )\, 
  \mu(\mathrm{d} y)
  = c_d\int_{-1}^ 1 h(t) \, w_d(t) \, \mathrm{d}t
  \quad\text{with}\quad
  c_d=\frac{\Gamma(\frac{d}{2})}{\Gamma(\frac{d-1}{2}) \sqrt{\pi}} \cdot
\end{equation}

We consider a symmetric kernel~$\kk$ on $\mathbb{S}^{d-1}$ corresponding to a geometric
random graph model on~$\mathbb{S}^{d-1}$, given by:
\begin{equation}
  \label{eq:kernel-sphere}
  \kk(x,y) = p(\braket{x,y})=f\circ \delta (x,y), \quad x,y \in \mathbb{S}^{d-1},
\end{equation}
where~$p \, \colon \, [-1,1] \to \R_+$ is a measurable function and~$f = p \circ \cos \,
\colon \, [0, \pi] \to \R_+$. We assume that~$\kk$ has finite double norm on~$L^2$; thanks
to~\eqref{eq:h-Sd2}, this is equivalent to:
\begin{equation}\label{eq:p-L2}
  \int_{-1}^1 p(t)^2 \, w_d(t) \, \mathrm{d}t
  = \int_0^\pi f(\theta)^2 \sin(\theta)^{d-2} \mathrm{d}\theta < \infty.
\end{equation}
By symmetry, using that the scalar product and the measure~$\mu$ are invariant by
rotations, we deduce that the kernel~$\kk$ is a
\regular{} kernel. According to~\eqref{eq:r0-regular} and using~\eqref{eq:h-Sd2}, we get
that the basic reproduction number is given by:
\begin{equation}
  R_0 = c_d \int_{-1}^{1} p(t) \, w_d(t) \, \mathrm{d}t
  = c_d \int_0^\pi f(\theta) \sin(\theta)^{d-2}\, \mathrm{d}\theta.
\end{equation}

By \cite[Theorem~1.2.9]{ApproximationTDaiF2013}, the eigenvectors of the
symmetric operator~$T_\kk$  on $L^2(\Sd)$  are the  spherical harmonics,
and  in  particular,   they  don't  depend  on   the  function~$p$.
We recall the linear subspace of spherical
   harmonics of degree~$n$ for $n \in \N$ has dimension $d_n$ given by
   $d_0=1$ and for $n\in \N^*$:
  \[
     d_n = \frac{2n+ d-2}{n+d-2} \binom{n + d - 2}{n}.
   \]
   The  corresponding eigenvalues~$(\lambda_n,  n\in \N)$  are real  and
   given by:
  \begin{equation}\label{eq:vpn-sphere}
     \lambda_n =c_d
     \int _{-1}^1 p(t) \, \frac{G_n(t)}{ G_n(1) } \, w_d(t) \, \mathrm{d}t
     = c_d
	     \int_0^\pi f(\theta)\, \frac{G_n(\cos(\theta))}{ G_n(1) }
             \sin(\theta)^{d-2}\, \mathrm{d}\theta,
  \end{equation}
  where~$G_n$ is the Gegenbauer polynomial of degree~$n$ and
  parameter~$(d-2)/2$
       (see
   \cite[Section~B.2]{ApproximationTDaiF2013} with~$G_n=C_n^{(d-1)/2}$).
   We simply recall that $G_0=\un$ and that for~$d=2$, the Gegenbauer polynomials are,
   up to  a multiplicative  constant, the  Chebyshev polynomials  of the
   first kind:
   \[
     G_n(\cos(\theta))=\frac{2}{n} \, \cos(n\theta) \quad\text{for
       $\theta\in [0, \pi]$ and $n\in \N^*$};
   \]
and that for $d\geq 3$, $r\in (-1,1)$ and~$\theta\in [0, \pi]$:
   \[
     \sum_{n=0}^\infty r^n G_n(\cos(\theta))=(1+r^2 - 2r
     \cos(\theta))^{-(d-2)/2}
     \quad\text{and}\quad
     G_n(1)=\binom{n+d-3}{n} \quad\text{for~$n\in \N^*$}. 
   \]

   Thus,  if~$\lambda\neq  0$ is  an  eigenvalue  of $T_\kk$,  then  its
   multiplicity  is  the  sum  of all  the  dimensions~$d_n$  such  that
   $\lambda_n = \lambda$.  The eigenvalue  $R_0$ (associated to
   the eigenvector $\un$ which is the spherical harmonic of degree 0) is
   in fact simple according to the next Lemma.

   \begin{lemma}
Let $\kk$ be a kernel  on $\Sd$ given
by~\eqref{eq:kernel-sphere}, with finite double norm and such that $R_0>0$. Then the 
kernel $\kk$ is \regular{} and irreducible, and  its eigenvalue $R_0$ is simple.
   \end{lemma}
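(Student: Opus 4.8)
The plan is to establish the three assertions in the order: regularity, then simplicity of the eigenvalue $R_0$, and finally irreducibility, deducing the last from the second by way of the complete-reduction result of Lemma~\ref{lem:reduction}. Regularity is immediate and has essentially been observed already: since the scalar product $\langle x,y\rangle$ and the measure $\mu$ are invariant under the action of the orthogonal group on $\Sd$, the in-degree $\int_{\Sd}\kk(x,y)\,\mu(\mathrm{d}y)$ does not depend on $x$ (and likewise for the out-degree), so $\kk$ is \regular{} in the sense of Definition~\ref{def:regular}, the hypothesis $R_0>0$ supplying the required positivity of the spectral radius. By \eqref{eq:r0-regular} and \eqref{eq:h-Sd2}, $R_0=\lambda_0$, the eigenvalue attached to the constant spherical harmonic $\un$ of degree $0$.

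For simplicity I would exploit the spectral description recorded just before the lemma: the non-zero eigenvalues of $T_\kk$ lie among the reals $\lambda_n$ of \eqref{eq:vpn-sphere}, and the multiplicity of a value $\lambda$ equals $\sum_{n\,\colon\,\lambda_n=\lambda} d_n$. As $d_0=1$, it suffices to prove $\lambda_n<R_0$ for every $n\geq 1$. Writing
\[
  R_0-\lambda_n
  = c_d\int_{-1}^{1} p(t)\left(1-\frac{G_n(t)}{G_n(1)}\right) w_d(t)\,\mathrm{d}t,
\]
the integrand is non-negative: $p\geq 0$, $w_d\geq 0$, and the classical bound $\abs{G_n(t)}\leq G_n(1)$ for $t\in[-1,1]$ (valid for Gegenbauer polynomials of parameter $(d-2)/2\geq 0$; for $d=2$ it is merely $\abs{\cos(n\theta)}\leq 1$) gives $1-G_n(t)/G_n(1)\geq 0$. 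For $n\geq 1$ the polynomial $G_n(1)-G_n(t)$ is not identically zero, hence vanishes at only finitely many $t\in[-1,1]$, so $1-G_n(t)/G_n(1)>0$ for Lebesgue-almost every $t\in(-1,1)$, where $w_d>0$ as well. Were $R_0-\lambda_n$ equal to zero, we would need $p=0$ almost everywhere on $(-1,1)$, whence $R_0=c_d\int_{-1}^{1} p\,w_d\,\mathrm{d}t=0$, contradicting $R_0>0$. Thus $\lambda_n<R_0$ for all $n\geq 1$, and $R_0$ is a simple eigenvalue.

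Irreducibility then follows at once: since $\kk$ is \regular, Lemma~\ref{lem:reduction} shows it is completely reducible with $\Omega_0$ of zero measure and with the number of irreducible components equal to the multiplicity of $R_0$. Having shown this multiplicity is $1$, the partition $(\Omega_i,\,i\in I)$ reduces to a single set of full measure, on which $\kk$ is irreducible; therefore $\kk$ itself satisfies \eqref{eq:irr}. The only delicate point in the whole argument is the Gegenbauer inequality $\abs{G_n(t)}\leq G_n(1)$ together with the remark that equality occurs only at finitely many points; both are standard facts about Gegenbauer polynomials (see e.g.\ \cite[Section~B.2]{ApproximationTDaiF2013}) and constitute the crux of the simplicity step.
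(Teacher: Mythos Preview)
Your proof is correct and follows essentially the same approach as the paper: both establish regularity trivially from rotation invariance, prove simplicity of $R_0$ via the Gegenbauer bound $\abs{G_n(t)}\leq G_n(1)$ together with the observation that equality holds only on a null set (since $G_n$ is a polynomial), and then deduce irreducibility from Lemma~\ref{lem:reduction} using that the multiplicity of $R_0$ equals the number of irreducible components. Your write-up is slightly more detailed in making explicit why $R_0-\lambda_n>0$ forces $p$ to be non-trivially supported, but the logical structure is identical.
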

   \begin{proof}
     The kernel $\kk$ is trivially a \regular{}  kernel. 
     Since~$d_0=1$, we only need to prove that $\lambda_n < \lambda_0 =
     R_0$ for all $n \in \N^*$ to get that $R_0$ is simple, and then use
     Lemma~\ref{lem:reduction} to get 
     that $\kk$ is irreducible.

     According to \cite[Equation~22.14.2]{abramowitz1972handbook} or
     \cite[Section~3.7.1]{atkinson2012}, we get that
     $|G_n(t)|\leq G_n(1)$ for $t\in [-1, 1]$.  Since $G_n$ is a
     polynomial,  the inequality is strict for a.e.
     $t\in [-1, 1]$.  Using~\eqref{eq:vpn-sphere}, we obtain that
     $\lambda_n < \lambda_0$ for all $n \in \N^*$.
   \end{proof}

\begin{example}[The circle:~$d=2$]
  \label{ex:sphere=circle}
  In   case~$d    =   2$,   we   identify    the   circle~$\mathbb{S}^1$
  with~$\Omega=\R/{2\pi\Z}$           and           the           scalar
  product~$\langle  \theta,\theta'  \rangle=\cos(\theta-\theta')$.
  The kernel~$\kk$ from~\eqref{eq:kernel-sphere}
  is           the           convolution          kernel           given
  by~$k(\theta,   \theta')=p  (\cos(\theta-\theta'))=f(\theta-\theta')$,
  where $f$ is symmetric non-negative and $2\pi$ periodic and its
  restriction to $[0, \pi]$ is square integrable.  Then,
  we can consider the development in~$L^2([0, \pi])$ of $f$ as a Fourier series:
  \begin{equation}
    \label{eq:fourier}
    f(\theta) = \sum_{n = 0}^\infty a_n(f) \cos(n \theta), \quad \theta \in [0, \pi],
\end{equation}
  where:
  \begin{equation}
    \label{eq:coef-fourier}
    a_0(f) = \frac{1}{\pi} \int_0^\pi f(\theta) \, \mathrm{d} \theta
    \quad\text{and}\quad 
    a_n(f) = \frac{2}{\pi} \int_0^\pi \cos(n \theta) f(\theta) \,
    \mathrm{d} \theta \quad\text{for}\quad n \geq 1.
\end{equation}
  It follows from Equation~\eqref{eq:fourier} that the kernel has the following
  decomposition in~$L^2([0, 2\pi)^2)$:
  \begin{equation}
    \label{eq:sphere-circle:k}
    \kk(\theta,\theta') = a_0(f) + \sum_{n = 1}^\infty a_n(f) \, \big(\cos(n
    \theta) \cos(n \theta') +
    \sin(n \theta) \sin(n \theta')\big), \quad \theta, \theta' \in [0,
    2\pi). 
  \end{equation}
  Assume that~$a_0(f)>0$, that  is, $f$ is non-zero.  Then, the spectral
  radius~$R_0=a_0(f)$ is an eigenvalue  with multiplicity one associated
  to the eigenfunction~$\un$  (and thus~$\kk$  is a \regular{}  kernel). The
  other   eigenvalues  are   given  by~$\lambda_{n}   =  a_n(f)/2$   for
  all~$n  \geq 1$  and, when  non zero and distinct,  have
  multiplicity~$2$.
\end{example}

\subsection{Sufficient condition for convexity or concavity}
\label{sec:sphere-uni}

We would like to provide conditions on the function~$f$ or~$p$ that
ensure that the eigenvalues~$(\lambda_n, \, n \geq 1)$ given by
\eqref{eq:vpn-sphere} of the operator~$T_\kk$ with the kernel~$\kk$
defined by~\eqref{eq:kernel-sphere} are all non-negative or all
non-positive so that~$R_e$ is convex or concave according to
Corollary~\ref{cor:regular}. Schoenberg's theorem, see
\cite[Theorem~14.3.3]{ApproximationTDaiF2013} or
\cite[Theorem~1]{StrictlyAndNoGneiti2013}, characterizes the continuous
function~$f$ such that the kernel~$\kk$ is positive semi-definite (and
thus the eigenvalues~$(\lambda_n, \, n \geq 1)$ are all non-negative) as
those with non-negative Gegenbauer coefficients:
$f=\sum_{n=0}^\infty a_n \, G_n$, where the convergence is
uniform on~$[-1, 1]$, with~$a_n\geq 0$ for all~$n\in \N$ and
$\sum_{n=0}^\infty a_n \, G_n(1)$ finite. When~$d=2$, this corresponds
to the Böchner theorem.
We refer to \cite{StrictlyAndNoGneiti2013} and references therein for
some characterization of functions~$f$ such that the kernel~$\kk$
from~\eqref{eq:kernel-sphere} is definite positive. 
We end this section with some examples. 

\begin{example}
  \label{ex:square}
  We give an elementary example in the
  setting of Example~\ref{ex:sphere=circle} when $d=2$.
  Set
  \[
    f_+(\theta)= (\pi - \theta)^2
    \quad\text{and}\quad
    f_-(\theta)= \pi^2- (\pi - \theta)^2
    \quad\text{for~$\theta\in [0, \pi]$.}
  \]
   We can compute the Fourier coefficients of~$f_+$ and~$f_-$ as:
  \[
    (\pi - \theta)^2 = \frac{\pi^2}{3} +
    \sum_{n = 1}^\infty \frac{4}{n^2} \cos(n \theta), \quad \theta \in [0,\pi].
  \]
  Using Corollary~\ref{cor:regular} and \cite[Theorem~5.5]{ddz-Re}, we
  deduce that the function~$R_e$ associated to the convolution
  kernel~$\kk=f_+\circ \delta$ is convex and~$\cpu\subset \cp$;
  whereas the function~$R_e$ associated to the convolution
  kernel~$\kk=f_-\circ \delta$ is concave and~$\cpu\subset \cpa$.
\end{example}

\begin{example}[Kernel from a completely monotone function]
  \label{ex:comp}
  Let~$\varphi$ be a continuous non-negative function defined
  on~$\R_+$, such that~$\varphi$ is completely monotone, that
  is,~$\varphi$ is infinitely differentiable on~$(0, +\infty )$ and~$(-1)^n \varphi^{(n)} \geq 0$ on~$(0,+\infty)$ for all~$n \geq 1$.
  Using \cite[Theorem~7]{StrictlyAndNoGneiti2013}, we get that the
  geometric kernel~$\kk = f\circ \delta$ on~$\Sd$, with~$d=2$, where~$f = \varphi_{[0,\pi]}$ is positive definite (thus all the
  eigenvalues of~$T_\kk$ are non-negative). Thanks to
  Corollary~\ref{cor:regular} and \cite[Theorem~5.5]{ddz-Re}, we
  deduce that~$R_e$ is convex and the uniform strategies are Pareto
  optimal:~$\cpu\subset \cp$.
\end{example}

\begin{example}[Kernel from a Bernstein function]
  \label{ex:Bern}
  Let~$\varphi$ be a Bernstein function, that is a non-negative~$C^1$
  function defined on~$\R_+$ such that~$\varphi^{(1)}$ is completely
  monotone. Assume furthermore that~$\sup_{\R_+} \varphi < \infty$.
  This gives that the function~$t \mapsto (\sup_{\R_+} \varphi) - \varphi(t)$ defined on~$\R_+$
  is continuous non-negative and completely monotone. Consider the
  geometric kernel~$\kk = f\circ \delta$ on~$\Sd$, with~$d=2$,
  where~$f = \varphi_{[0,\pi]}$. We deduce from
  \cite[Theorem~7]{StrictlyAndNoGneiti2013}, see also the previous
  example, that all the eigenvalues of the integral operator~$T_\kk$,
  but for $R_0$, are non-positive. Then, using
  Corollary~\ref{cor:regular} and \cite[Theorem~5.5]{ddz-Re}, we get
  that~$R_e$ is concave and the uniform strategies are anti-Pareto
  optimal:~$\cpu\subset \cpa$.
\end{example}

\begin{example}[Kernel from a power function] \label{ex:puissance} Let~$m \geq 1$ be an
  integer and~$\theta_0 \geq \pi$ a real number.
  Using \cite[Lemma~4]{StrictlyAndNoGneiti2013}, we get that for 
  $f(\theta) = (\theta_0 - \theta)^m$, $R_e$ is convex and the uniform
  vaccination strategies are Pareto optimal; and that
  for~$f(\theta) = \theta_0^m - (\theta_0 - \theta)^m$,
  $R_e$ is concave and the uniform strategies are anti-Pareto optimal.
\end{example}

\begin{example}[The function $p$ is a series] According  to  
  \cite[Theorem~1]{StrictlyAndNoGneiti2013},  if  the function $p$  can be  written as
  $p(t)=\sum_{n\in  \N} b_n  \, t^n$ with $b_n$ non-negative and $\sum_{n\in  \N} b_n$
  finite, then, for all $d\geq 2$, the kernel $\kk$ defined by~\eqref{eq:kernel-sphere} on
  $\Sd$  is semi definite  positive (and definite positive  if the coefficients  $b_n$ 
  are  positive  for infinitely  many  even  and infinitely many odd  integers $n$), and
  thus the function $R_e$ is convex and  the uniform  vaccination strategies are  Pareto
  optimal thanks          to         Corollary~\ref{cor:regular}          and
  \cite[Theorem~5.5]{ddz-Re}. \medskip

  Consider    the     kernel    $\kk(x,y)=|x-y|^\nu$,     that    is, $p(t)=2^{\nu/2}  
  |1-t|^{\nu/2}$,   with   $\nu>  (1-d)/2$,   so   that condition~\eqref{eq:p-L2}         
  holds.              According to~\cite[Section~3.7.1]{atkinson2012} and  Equation~(3.74)
  therein, for $n\geq  1$, the eigenvalues  $\lambda_n$ have the same  sign as
  $\prod_{k=0}^{n-1}   (-\nu  +2k)$.    So,   we   deduce  that   for $\nu\in ((1-d)/2,
  0)$ all the eigenvalues are positive  and thus $R_e$ is  convex  and  the  uniform 
  vaccination  strategies  are  Pareto optimal;  and  for  $\nu\in  (0,   2)$  all  the 
  eigenvalues  (but $\lambda_0=R_0>0$) are negative  and thus $R_e$ is  concave and the
  uniform strategies are anti-Pareto optimal. The latter case is also a consequence of
  \cite[Theorem~1]{StrictlyAndNoGneiti2013}, whereas the former case is not a direct
  consequence of \cite[Theorem~1]{StrictlyAndNoGneiti2013} as $\sum_{n\in \N} b_n$ is not
  finite when $\nu$ is negative. 
\end{example}

\subsection{The affine model}\label{sec:sphere-aff}

Recall $\Omega= \Sd\subset  \R^d$, with $d\geq 2 $, is  endowed with the uniform
probability measure $\mu$. In  this section, we suppose that the model    is   affine,
that is,   the    kernel    $\kk$    given by~\eqref{eq:kernel-sphere}, \emph{i.e.}  $
\kk(x,y) = p(\braket{x,y})$, has a linear envelope:
\[
  p(t)=a + b t \quad\text{for}\quad t\in [-1, 1].
\]
The kernel~$\kk$ being  non-negative non-constant with~$R_0>0$ is equivalent to the
condition  $a\geq |b| > 0$ on the parameter~$(a,b)$.  This model corresponds to 
$f(\theta) =  a + b \cos(\theta)$  for $\theta\in [0, \pi]$.   Since the Gegenbauer
polynomials~$(G_n,  n\in \N)$ are orthogonal  with respect to the  measure~$ w_d(t)\, 
\mathrm{d} t$,  we  easily  deduce from~\eqref{eq:vpn-sphere} that the non-zero
eigenvalues of the integral operator~$T_\kk$      are~$R_0=a$       (with     
multiplicity~$d_0=1$) and~$\lambda_1= b/d$ (with multiplicity~$d_1=d$).  \medskip

For~$x\in \Sd$ and~$t\in [-1,1]$, we consider the following balls centered at~$x$:
\[
  B(x,t)=\{y\in \Sd\, \colon\, \langle x,y \rangle \geq t\}.
\]
Recall that strategies are defined up to equality almost surely. We 
consider the
following sets of extremal strategies, for~$x\in \Sd$:
\[
  \Sball= \left\{ \ind{B(x,t)}\, \colon\, x\in \Sd, \, t \in [-1,1]\right\},
\]
as well as the following set of strategies which contains the
set of uniform strategies
$ \cpu=\{ t\un\, \colon\, t\in [0,1]\}$:
\[
\So=\left\{\eta\in \Delta\, \colon \, \int_\Sd x \, \eta(x) \, 
\mu(\mathrm{d} x) =0\right\}.
\]

\begin{proposition}
  \label{prop:rank2-sphere}
  Let~$a\geq |b|>0$ and the kernel~$\kk$ on~$\Sd$, with~$d\geq 2$, be given by: \[
  \kk(x,y)=a+ b \langle x,y \rangle. \]
  \begin{enumerate}[(i)]
    \item\label{item:r2-k+} \textbf{The case~$b>0$.} A strategy is Pareto optimal if and
      only if it belongs to $\So$. In particular, for any~$c\in[0,1]$, the strategy
      $(1-c)\un$ costs~$c$ and is Pareto optimal. The  anti-Pareto optimal strategies
      are~$\ind{B(x,t)}$ for~$x\in \Sd$ and $t\in [-1,1]$.  In other words: \[ \cp=\So
      \quad\text{and}\quad \cpa= \Sball. \]
    \item\label{item:r2-k-} \textbf{The case~$b<0$.} A strategy is anti-Pareto optimal if
      and only if it belongs to~$\So$. In particular, for any~$c\in[0,1]$,  the
      strategy~$(1-c)\un$ costs~$c$ and is anti-Pareto optimal. The Pareto optimal
      strategies are~$\ind{B(x,t)}$ for~$x\in \Sd$ and $t\in [-1,1]$. In other words: \[
      \cpa= \Sball \quad\text{and}\quad \cpu=\So. \]
  \end{enumerate}
  In both cases, we have~$ \cmir =1$ and~$\cmar=0$. 
\end{proposition}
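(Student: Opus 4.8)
The plan is to mirror the treatment of the rank-two model in
Proposition~\ref{prop:rank2}, exploiting that $T_\kk$ is here a rank-$(d+1)$
operator. Writing $u_0=\un$ and $u_i(x)=x_i$ ($1\le i\le d$), the
symmetrisation of $T_{\kk\eta}$ is the operator $\tilde T_\eta$ with kernel
$\sqrt{\eta(x)}\,\kk(x,y)\sqrt{\eta(y)}$, so that $R_e(\eta)=\rho(\tilde T_\eta)$
and $\tilde T_\eta=a\,U_0\otimes U_0+b\sum_{i=1}^d U_i\otimes U_i$ with
$U_0=\sqrt\eta$ and $U_i=\sqrt\eta\,u_i$. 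First I would record the cheap facts:
since $a\ge|b|>0$ we have $\kk>0$ $\mu^{\otimes2}$-a.s., so
Lemma~\ref{lem:k>0-c} gives $\cmir=1$ and $\cmar=0$; and since the nonzero
eigenvalues of $T_\kk$ are $R_0=a$ (simple) and $\lambda_1=b/d$ (multiplicity
$d$), Corollary~\ref{cor:regular} yields that $R_e$ is convex with
$\cpu\subset\cp$ when $b>0$, and concave with $\cpu\subset\cpa$ when $b<0$.

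Next I would identify the ``orthogonal'' optima $\So$. A direct computation
shows $T_{\kk\eta}\un=a\,(1-C(\eta))\,\un$ whenever $\int_\Sd x\,\eta\,\mu(\mathrm dx)=0$;
pairing with a left Perron eigenfunction (which is positive) then forces
$R_e(\eta)=R_0(1-C(\eta))$, so $\So$ lies on the affine segment swept by the
uniform strategies. For the converse I use the quadratic form
$\langle w,\tilde T_\eta w\rangle=a\langle U_0,w\rangle^2+b\sum_i\langle U_i,w\rangle^2$,
noting $\langle U_i,U_0\rangle=\int x_i\,\eta\,\mathrm d\mu$ and
$\|U_0\|^2=1-C(\eta)$. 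When $b>0$ the test vector $w=U_0/\|U_0\|$ gives the
lower bound $R_e(\eta)\ge R_0(1-C(\eta))+\frac{b}{1-C(\eta)}\bigl|\int x\,\eta\,\mathrm d\mu\bigr|^2$;
when $b<0$ dropping the negative sum and Cauchy--Schwarz give the matching upper
bound $R_e(\eta)\le a\|U_0\|^2=R_0(1-C(\eta))$. In both cases equality holds iff
$\int x\,\eta\,\mathrm d\mu=0$, which pins down $\cp=\So$ for $b>0$ and
$\cpa=\So$ for $b<0$.

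The substantial part is the description of the other frontier by spherical caps.
The key structural observation is that the right Perron eigenfunction $h$ of
$T_{\kk\eta}$ is an \emph{affine} function of $x$: since
$\kk(x,y)=a+b\langle x,y\rangle$, the eigenrelation reads
$R_e(\eta)\,h(x)=a\int h\eta\,\mathrm d\mu+b\langle x,\vec v\rangle$ with
$\vec v=\int y\,h(y)\eta(y)\,\mu(\mathrm dy)$, hence $h(x)=p+q\langle x,\hat v\rangle$
for a unit vector $\hat v$ and a scalar $q$ with the sign of $b$. Using
Bauer's maximum principle (the max of the convex $R_e$, resp.\ the min of the
concave $R_e$, over a fixed-cost slice is attained at extreme points, i.e.\
indicators), together with the first-order/bang--bang condition
$DR_e(\eta)[\zeta]=R_e(\eta)\int\zeta\,h^2\,\mathrm d\mu$, an optimal indicator
must be a level set of the affine $h$. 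In the aligned case ($b>0$, anti-Pareto)
the set is a \emph{super}-level set $\{h^2>\ell\}$, and positivity of $h$ on the
support of $\eta$ rules out the negative cap, forcing $\{h>\sqrt\ell\}$, a cap
$B(\hat v,t)$; rotational invariance then gives $\Sball=\cpa$.

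The main obstacle is the remaining extreme case ($b<0$, Pareto), exactly the
analogue of the ``$\kk^-$'' Pareto analysis in Proposition~\ref{prop:rank2}.
There the bang--bang condition only yields a \emph{sub}-level set of $h^2$, which
a priori could be an equatorial belt rather than a cap, so the first-order
information alone is not conclusive and a genuine global comparison is needed.
Here I would reduce to one dimension: since both $h$ and $\kk$ depend on $x$ only
through $t=\langle x,\hat v\rangle$, whose law under $\mu$ has the explicit
density proportional to $w_d$, the comparison of a strategy with its cap
rearrangement becomes a monotone-rearrangement inequality in $t$, strictly
parallel to the strict inequalities~\eqref{eq:comparison} with $\alpha(x)$
replaced by $\langle x,\hat v\rangle$. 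The genuinely multidimensional content,
namely that an arbitrary $\eta$ can be compared to a true cap and not merely to
another axially symmetric profile, I would supply by a polarisation (two-point
symmetrisation) argument on $\Sd$: for $b<0$ one has
$\kk(x,y)\le\kk(x,\sigma y)$ whenever $x,y$ lie on the same side of the mirror
of a reflection $\sigma$, which decreases the Rayleigh quotient and hence
$R_e(\ind A)=\rho(\kk|_A)$, with equality only for caps. The delicate point
throughout is \emph{strictness} of these inequalities, which (as for
$\alpha$ in Proposition~\ref{prop:rank2}) comes from $b\neq0$ making $\kk$
strictly monotone in the geodesic distance together with $\mu$ having no atoms
and full support.
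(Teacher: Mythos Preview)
Your treatment of the $\So$-characterisation is correct and parallel in content to the paper's Step~3 (which argues via the $2\times2$ matrix $M(\eta)$). For the other frontier you take a genuinely different route: Bauer's principle on the cost slice together with the first-order analysis of the affine Perron eigenfunction. The paper instead couples the sphere model to the one-dimensional rank-two model of Section~\ref{sec:rank-2-reg}: after aligning $z_0$ with the eigenfunction direction, $R_e(\eta)=\rho(M(\eta))$ where the entries of $M(\eta)$ are integrals of $\eta$ against $1,\langle\cdot,z_0\rangle,\langle\cdot,z_0\rangle^2$; hence $M(\eta)$ depends on $\eta$ only through the conditional expectation $\eta_0(t)=\E[\eta(X)\mid\langle X,z_0\rangle=t]$, and Proposition~\ref{prop:rank2} applied to $\eta_0$ forces $\eta_0\in\{0,1\}$, which together with $0\le\eta\le1$ pins $\eta$ down as a cap. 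Your approach is more direct and avoids invoking Section~\ref{sec:rank-2-reg}; the paper's has the advantage of making the link with the rank-two model explicit.

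Your bang--bang argument in fact closes \emph{both} extreme cases, and the ``main obstacle'' you flag is illusory. The missing observation is that the Perron eigenfunction $h$ is strictly positive \emph{everywhere} on $\Sd$, not only on $\{\eta>0\}$: from $R_e(\eta)\,h(x)=\int_A\kk(x,y)h(y)\,\mu(\mathrm dy)$ with $\kk>0$ a.e.\ and $h>0$ on $A$ (irreducibility of $\kk|_{A\times A}$), the right-hand side is strictly positive for every $x$. Thus for $b<0$ the sub-level set $\{h^2<\ell\}$ equals $\{h<\sqrt\ell\}=\{\langle x,\hat v\rangle>(\sqrt\ell-p)/(-q)\}$ since $q<0$: a cap, not a belt, and the polarisation detour is unnecessary (it would in any case require a nonstandard Riesz-type inequality for the spectral radius). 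Two residual points: to obtain the \emph{equality} $\cpa=\Sball$ (resp.\ $\cp=\Sball$) you must rule out non-indicator optimisers, which follows since the first-order conditions force $\{0<\eta<1\}\subset\{h^2=\mathrm{const}\}$, a $\mu$-null set as $h$ is affine and non-constant (else $\eta\in\So$); and your one-dimensional reduction is misstated --- the kernel $\kk(x,y)=a+b\langle x,y\rangle$ does \emph{not} depend on $x$ only through $\langle x,\hat v\rangle$. What does factor through that projection is $R_e(\eta)$ itself, via the $2\times2$ matrix, and this is precisely the mechanism the paper exploits.
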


\begin{example}
  We consider the kernel~$\kk=1 +b \langle \cdot, \cdot \rangle$ on the sphere~$\Sd$,
  with~$d=2$. This model has the same Pareto and anti-Pareto frontiers as the equivalent
  model  given by~$\Omega = [0,1)$ endowed with the Lebesgue measure and the kernel~$(x,y)
  \mapsto 1 + b \cos(\pi(x-y))$, where the equivalence holds  in the sense of
  \cite[Section~7]{ddz-theo}, with an obvious deterministic coupling $\theta\mapsto
  \exp(2i\pi \theta)$.  We provide the Pareto and anti-Pareto frontiers in
  Figure~\ref{fig:optim_regular} with~$b=1$ (top) and with~$b=-1$ (bottom).
\end{example}
 
\begin{proof}
  The proof of Proposition~\ref{prop:rank2-sphere} is decomposed in four steps.
  
  \textit{Step 1:}~$R_e(\eta)$ is the eigenvalue of a~$2\times 2$
  matrix~$M(\eta)$. Without loss of generality, we shall assume
  that~$R_0=a=1$. Since~$\kk$ is positive a.s., we deduce
  that~$\cmir=1$ and~$\cmar=0$ thanks to Lemma~\ref{lem:k>0-c}; and
  the strategy~$\un$ (resp.~$\zero$) is the only Pareto optimal as
  well as the only anti-Pareto optimal strategy with cost 0 (resp.
  1). So we shall only consider strategies~$\eta\in \Delta$ such
  that~$C(\eta)\in (0, 1)$. \medskip

  Let~$z_0\in \Sd$. Write~$b=\varepsilon \lambda^2$ with $\varepsilon\in \{-1,+1\}$
  and~$\lambda\in (0, 1]$, and define the function $\alpha$ on~$\Sd$ by:
  \[
    \alpha=\lambda \, \langle \cdot, z_0 \rangle.
  \]
  Let~$\eta\in \Delta$ with cost~$c\in (0, 1)$. As~$\cmir=1> C(\eta)$,
  we get that~$R_e(\eta)>0$. We deduce
  from the special form of the kernel~$\kk$ that the eigenfunctions of $T_{\kk \eta}$ are
  of the form~$\zeta +\beta \lambda \langle \cdot, y \rangle$ with~$\zeta, \beta\in \R$
  and~$y\in \Sd$. Since~$R_e(\eta)>0$, the right Perron eigenfunction, say~$h_\eta$, being
  non-negative, can be chosen such that~$h_\eta=1+\beta_\eta \lambda\langle \cdot, y_\eta
  \rangle$ with~$\beta_\eta\geq 0$ and~$\beta_\eta\lambda\leq 1$. Up to a rotation on the
  vaccination strategy, we shall take $y_\eta=z_0$, that is:
  \[
    h_\eta=1+ \beta_\eta \, \alpha.
  \]
  From the equality~$R_e(\eta) h_\eta= T_{\kk \eta} h_\eta$, we deduce that:
  \begin{align}
    \label{eq:Re-pour-1}
    R_e(\eta)&= \int_\Sd \eta(y) \, \mu( \mathrm{d}y) + \beta_\eta
    \,\lambda\int_\Sd \eta(y) \,
    \langle y, z_0 \rangle \, \mu(\mathrm{d} y),\\
    \label{eq:Re-pour-alpha}
    \beta_\eta R_e(\eta) \langle \cdot, z_0 \rangle
	     &=\varepsilon \lambda \int_\Sd \eta(y) \, \langle \cdot, y \rangle\,
	     \mu( \mathrm{d}y)+ \beta_\eta\, \varepsilon \lambda^2\int_\Sd \eta(y)
	     \, \langle \cdot, y \rangle\langle y, 
	     z_0 \rangle\, \mu( \mathrm{d}y) .
  \end{align} 
  Evaluating the latter equality at~$x=z_0$, we deduce that $R_e(\eta)$ is a positive
  eigenvalue of the matrix~$M(\eta)$ associated to the eigenvector~$(1, \beta_\eta)$,
  where:
  \begin{equation}\label{eq:def-Mh-sp}
    M(\eta)=
    \begin{pmatrix}
      \int \eta \, \mathrm{d} \mu & \int \alpha\,\eta \, \mathrm{d} \mu\\
      \varepsilon \int \alpha\,\eta \, \mathrm{d} \mu & \varepsilon\int
      \alpha^2\, \eta \, \mathrm{d} \mu\\ 
    \end{pmatrix}.
  \end{equation}
  We end this step by proving the following equivalence:
  \begin{equation}
    \label{eq:ba=0}
    \beta_\eta=0
    \Longleftrightarrow
    \int \alpha\,\eta \, \mathrm{d} \mu = 0. 
  \end{equation}
  Indeed, if~$\beta_\eta =0$, then the vector~$(1, 0)$ is an eigenvector of $M(\eta)$
  associated to the eigenvalue~$R_e(\eta)$. We deduce from~\eqref{eq:def-Mh-sp} that $\int
  \alpha\,\eta \, \mathrm{d} \mu= 0$. Conversely, if  $\int \alpha\,\eta \, \mathrm{d}
  \mu= 0$, then the matrix~$M(\eta)$ is diagonal with eigenvalues~$\int \eta \,
  \mathrm{d}\mu$ and $\int \alpha^2\, \eta \, \mathrm{d} \mu$. As~$\alpha^2\leq 1$ with
  strict inequality on a set of positive~$\mu$-measure, we deduce that:
  \begin{equation}\label{eq:h>ha2} \int \eta \, \mathrm{d}\mu > \int \alpha^2\, \eta \,
    \mathrm{d} \mu.
  \end{equation}
  Since~$(1, \beta_\eta)$ is an eigenvector of~$M(\eta)$, this implies
  that~$\beta_\eta=0$. This proves~\eqref{eq:ba=0}.

  \medskip

  \textit{Step 2:} $R_e(\eta)$ is the spectral radius of the matrix $M(\eta)$, that
  is,~$R_e(\eta)=\rho(M(\eta))$. We first consider the case $\varepsilon=-1$.
  Since~$\alpha$ is non constant as~$\lambda>0$, we deduce from the Cauchy-Schwarz
  inequality, that the determinant of $M(\eta)$ is negative. As a.s.~$\cmir=1$, we deduce
  that~$R_e(\eta)>0$, and thus the other eigenvalue is negative. Since~$\alpha^2\leq 1$,
  the trace of~$M(\eta)$ is non-negative, thus~$R_e(\eta)$ is the spectral radius of the
  matrix~$M(\eta)$.

  \medskip

  We now consider the case~$\varepsilon=+1$.  Let $\etau$ be the uniform
  strategy with the same cost as~$\eta$. Thanks to~\eqref{eq:Re-pour-1},
  we get $R_e(\etau)=\int \etau\, \rd \mu=\int \eta\, \rd \mu$. 
  Since the non-zero eigenvalues of $T_\kk$,
  that is,~$1$ and~$\lambda^2/d$, are positive, we deduce from
  Corollary~\ref{cor:regular}~\ref{cor:regular-conv}, that the uniform strategies are
  Pareto optimal ($\cpu\subset \cp$), so we have:
  \[
    R_e(\eta)\geq R_e(\etau)=\int \eta \, \mathrm{d} \mu. 
  \]
  We           deduce           from~\eqref{eq:Re-pour-1}           that
  $\beta_\eta \int \alpha\,\eta \, \mathrm{d} \mu\geq 0$.

  On the one hand, if  $\beta_\eta \int \alpha \,\eta \, \mathrm{d} \mu=
  0$, then, 
  by~\eqref{eq:ba=0}, the matrix~$M(\eta)$ is diagonal. Using~\eqref{eq:h>ha2}, we obtain
  that~$R_e(\eta)=\rho(M(\eta))$. On the other hand, if $\beta_\eta \int \alpha\,\eta \,
  \mathrm{d} \mu>0$, then the matrix $M(\eta)$ has positive entries. Since the
  eigenvector~$(1, \beta_n)$ also has positive entries, it is the right Perron eigenvector
  and the corresponding eigenvalue is the spectral radius of~$M(\eta)$, that
  is,~$R_e(\eta)=\rho(M(\eta))$.
  To conclude, the equality~$R_e(\eta)=\rho(M(\eta))$ holds in all cases. 

  \medskip

  \textit{Step 3:} $R_e(\eta)=\int \eta \, \rd \mu \Longleftrightarrow \eta\in
  \So$.  Let~$\eta\in \Delta$ such that~$R_e(\eta)=\int \eta\, \rd \mu$. We deduce
  from~\eqref{eq:Re-pour-1} that~$ \beta_\eta \int \alpha\,\eta \, \mathrm{d} \mu= 0$.
  Thanks to~\eqref{eq:ba=0}, this implies that $\beta_\eta=0$.
  Using~\eqref{eq:Re-pour-alpha}, we obtain that $\int y\eta(y) \, \mu(\mathrm{d} y)=0$
  and thus~$\eta\in \So$. Clearly if~$\eta\in \So$, we deduce from~\eqref{eq:Re-pour-1}
  that $R_e(\eta)=\int \eta \, \mathrm{d} \mu$.

  As a consequence and since~$\cpu\subset\So$, we deduce from Corollary~\ref{cor:regular}
  that if~$\varepsilon=+1$, then~$\cpu \subset \cp$ and thus~$\cp=\So$; and that
  if~$\varepsilon=-1$, then~$\cpu\subset \cpa$ and thus~$\cpa=\So$. \medskip

  \textit{Step 4:} A relation with the \regular{} symmetric kernels of rank two from
  Section~\ref{sec:rank-2-reg}. This step is in the spirit of \cite[Section~7]{ddz-theo}
  on coupled models.  Let~$X$ be a uniform random variable on~$\Sd$. Let~$\Omega_0=[-1,1]$
  endowed with the probability measure $\mu_0(\mathrm{d} t)= c_d\,
  w_d(t)\, \mathrm{d} t$, and set~$\Delta_0$ the set of~$[0, 1]$-valued measurable
  functions defined on~$\Omega_0$. According to \cite[Theorem 8.9]{Kal21}, there exists
  $\eta_0 \in \Delta_0$ such that a.s.:
  \begin{equation}\label{eq:def-h0}
    \eta_0(\langle X, z_0 \rangle)=\E\left[\eta(X)\, |\, \langle X, z_0 \rangle\right].
  \end{equation}
  Set~$\alpha_0=\lambda t$, and define the matrix:
  \[
    M_0(\eta_0)=
    \begin{pmatrix}
      \int_{\Omega_0} \eta_0 \, \mathrm{d} \mu_0 & \int_{\Omega_0}
      \alpha_0\,\eta_0 \, \mathrm{d} \mu_0\\ 
      \varepsilon \int_{\Omega_0} \alpha_0\,\eta_0 \, \mathrm{d} \mu_0 &
      \varepsilon\int_{\Omega_0}
      \alpha_0^2\, \eta_0 \, \mathrm{d} \mu_0\\ 
    \end{pmatrix}.
  \] 
  By construction of~$\eta_0$, we have~$ M_0(\eta_0)=M(\eta)$. Thanks to
  Section~\ref{sec:rank-2-reg}, see Lemma~\ref{lem:2formula} (but for the fact
  that~$\Omega$ therein in replaced by $[-1, 1]$), we get that~$M_0(\eta_0)$ is exactly
  the matrix in~\eqref{eq:matrix-representation}, and thus the spectral radius of $
  M_0(\eta_0)$ is the effective reproduction number of the model associated to the
  \regular{} symmetric kernel of rank two $\kk_0^\varepsilon=1+ \varepsilon
  \alpha_0\otimes \alpha_0$ given in~\eqref{eq:k+} (with~$\Omega$,~$\mu$,~$\alpha$
  replaced by~$\Omega_0$, $\mu_0$ and~$\alpha_0$). We deduce that: if~$\eta$ is Pareto or
  anti-Pareto optimal for the model~$(\Sd, \mu, \kk)$ then so is~$\eta_0$ for the
  model~$(\Omega_0, \mu_0, \kk_0^\varepsilon)$; and if~$\eta_0$ is Pareto or anti-Pareto
  optimal for the model $(\Omega_0, \mu_0, \kk_0^\varepsilon)$, so is any strategy~$\eta$
  such that~\eqref{eq:def-h0} holds.

  \medskip

  We first consider the case~$\varepsilon=+1$. According to Proposition~\ref{prop:rank2},
  we get that the anti-Pareto optimal strategies are~$\eta_0=\ind{[-1, t)}$
  or~$\eta_0=\ind{[-t, 1)}$ for a given cost~$c$ (with~$t$ uniquely characterized by~$c$).
  Using that $0\leq \eta \leq 1$, we deduce that the only possible choice for $\eta$ such
  that~\eqref{eq:def-h0} holds is to take $\eta=\ind{B(-z_0,t)}$
  or~$\eta=\ind{B(-z_0,t)}$. Since~$z_0$ was arbitrary, we get that the only possible
  anti-Pareto optimal strategies belong to~$\Sball$. Notice that anti-Pareto optimal
  strategies exist for all cost~$c\in [0,1]$ as~$\kk>0$ a.s., see Lemma~\ref{lem:k>0-c}
  and \cite[Section~5.4]{ddz-theo} for irreducible kernels, loss function~$R_e$ and
  uniform cost function~$C$ given by~\eqref{def:cost}. Since the set of anti-Pareto
  optimal strategies is also invariant by rotation, we deduce that~$\cpa=\Sball$.

  The case~$\varepsilon=-1$ is similar and thus~$\cp=\Sball$ in this case. (Notice the
  irreducibility of the kernel~$\kk$ is only used in \cite[Lemma~5.13]{ddz-theo} for the
  study of anti-Pareto frontier.)
\end{proof}

\begin{figure}
  \begin{subfigure}[T]{.5\textwidth}
    \centering
    \includegraphics[page=1]{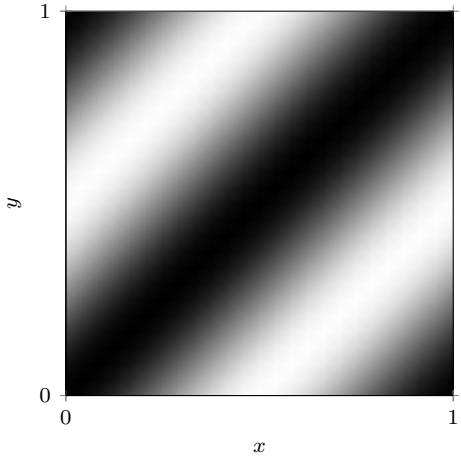}
    \caption{Grayplot of the kernel
     $\kk = 1 +\cos(\pi(x-y))$ on~$[0, 1)$.}\label{fig:geometric+} 
  \end{subfigure}%
  \begin{subfigure}[T]{.5\textwidth} \centering
    \includegraphics[page=2]{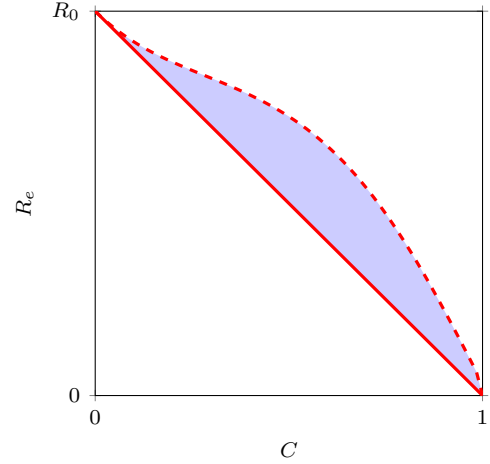} \caption{Solid line: the Pareto frontier~$\F$ (which corresponds to the uniform strategies); dashed line: the
      anti-Pareto frontier~$\AF$; blue
    region: all possible outcomes~$\FF$.}
    \label{fig:frontier-geometric+}
  \end{subfigure}
  \begin{subfigure}[T]{.5\textwidth}
    \centering
    \includegraphics[page=3]{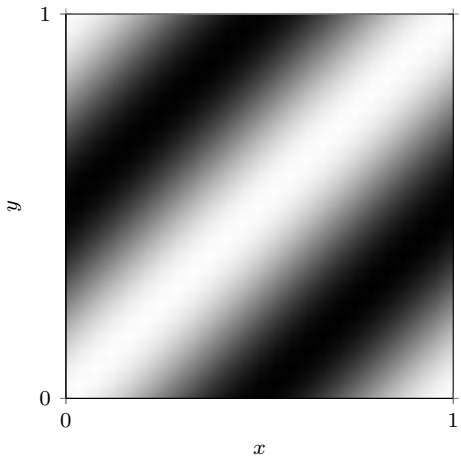}
    \caption{Grayplot of the kernel~%
      $\kk = 1 - \cos(\pi(x-y))$ on~$[0, 1)$.}\label{fig:geometric-} 
  \end{subfigure}%
  \begin{subfigure}[T]{.5\textwidth} \centering
    \includegraphics[page=4]{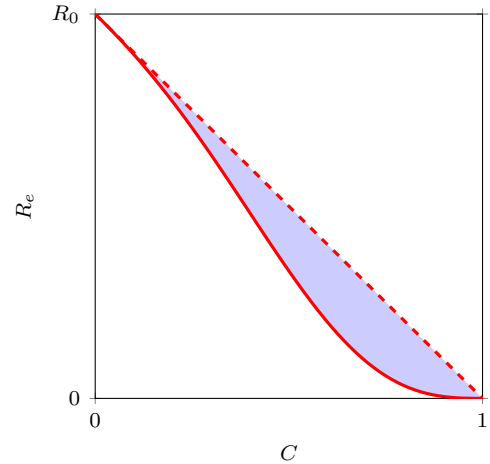} \caption{Solid line:
      the Pareto frontier~$\F$; dashed line: the 
      anti-Pareto frontier~$\AF$ (which corresponds to the uniform strategies); blue
    region: all possible outcomes~$\FF$.}
    \label{fig:frontier-geometric-}
  \end{subfigure}%
  \caption{Two examples of a geometric kernel on the circle~$\R\setminus
    \Z$.} \label{fig:optim_regular}
\end{figure}

\printbibliography

\end{document}